\newtheorem{theorem}{Theorem}[section]
\newtheorem{proposition}[theorem]{Proposition}
\newtheorem{proposition-definition}[theorem]{Proposition-Definition}
\newtheorem{lemma}[theorem]{Lemma}
\newtheorem{corollary}[theorem]{Corollary}
\theoremstyle{remark}
\newtheorem{remark}[theorem]{Remark}
\theoremstyle{definition}
\newtheorem{definition}[theorem]{Definition}
\newcommand{\C}{\mathbb{C}}
\newcommand{\GL}{\operatorname{GL}}
\newcommand{\Rad}{\operatorname{Rad}}
\newcommand{\im}{\operatorname{Im}}
\newcommand{\K}{\mathbb{K}}
\newcommand{\Z}{\mathbb{Z}}
\newcommand{\rad}{\operatorname{rad}}
\newcommand{\N}{\mathbb{N}}
\newcommand{\image}{\operatorname{Im}}
\newcommand{\U}{\mathfrak{U}}
\newcommand{\dominate}{\ensuremath{\trianglerighteq}}
\newcommand{\wt}{\operatorname{wt}}
\newcommand{\Cat}{\mathcal{C}}
\newcommand{\A}{\mathfrak{A}}
\newcommand{\Cf}{\mathfrak{C}}
\newcommand{\charact}{\operatorname{char}}
\DeclareMathOperator{\End}{End}
\DeclareMathOperator{\Ext}{Ext}
\DeclareMathOperator{\Ker}{Ker}
\DeclareMathOperator{\Tor}{Tor}
\DeclareMathOperator{\Hom}{Hom}
\DeclareMathOperator{\Maps}{Maps}
\author{Ana Paula Santana and Ivan Yudin
\thanks{Financial support by CMUC/FCT gratefully acknowledged by both
authors.}
\thanks{The second author's work is supported by the FCT Grant
SFRH/BPD/31788/2006.}}
\title{The Kostant form of $\U(sl_n^+)$ and the Borel subalgebra of
the Schur algebra $S(n,r)$
}
\begin{document}
\maketitle
\begin{abstract}
 Let $\A_n(\K)$ be the Kostant form of $\U(sl_n^+)$ and $\Gamma$
 the monoid generated by the positive roots of $sl_n$. 
 For each $\lambda\in \Lambda(n,r)$ we construct a functor
 $F_{\lambda}$ from the category of finitely generated $\Gamma$-graded
 $\A_n(\K)$-modules to the category of finite-dimensional
 $S^+(n,r)$-modules, with the property that       $F_{\lambda}$ maps
 (minimal) projective resolutions of the one-dimensional
 $\A_n(\K)$-module $\K_{\A}$ to (minimal) projective resolutions of the simple $S^+(n,r)$-module
 $\K_{\lambda}$. 
\end{abstract}
\section*{Introduction}
The polynomial representations of the general linear group
$\GL_n(\C)$ were studied by I.~Schur in his doctoral
dissertation~\cite{schur1}. In this famous work, Schur introduced the,
now called, Schur algebras, which are a powerful tool to connect
$r$-homogeneous polynomial representations of the symmetric group on
$r$ symbols. 

These results of I.~Schur were generalised by J.-A.~Green to infinite
fields of arbitrary characteristic in~\cite{green}.  In Green's work
the Schur algebra $S(n,r)=S_{\K}(n,r)$ plays the central role in the
study of polynomial representations of $\GL_n(\K)$. 

In \cite{donkin1} Donkin shows that $S(n,r)$ is a quasi-hereditary
 and so it has finite global dimension.
This led to the problem of describing explicit
projective resolution of the Weyl modules for $S(n,r)$. Only partial
answers to this problem are known. In \cite{akin} and
\cite{z} such resolutions were constructed for the case when $\K$ is a field
of characteristic zero. If $\K$ has arbitrary characteristic then
projective resolutions of $W_{\lambda}$ are given in
\cite{akin-buchsbaum1}
when $n=2$ ($\lambda$ arbitrary), and in~\cite{maliakas} and~\cite{woodcock} for hook
partitions.    

In \cite{woodcock} Woodcock provides the tools to reduce the problem of
constructing these resolutions to the similar problem for the simple
modules for the Borel subalgebra $S^+(n,r)$ of $S(n,r)$.

Denote by
$\Lambda(n,r)$ the set of compositions of $r$ onto $n$ parts. It is
proved in~\cite{s} that all simple $S^+(n,r)$-modules are
one-dimensional and parametrised by the set $\Lambda(n,r)$. We denote
the simple module corresponding to $\lambda\in\Lambda(n,r)$ module by $\K_{\lambda}$.
In~\cite{s}, the first two
steps in a minimal projective resolution of $\K_{\lambda}$ and the first three terms of a minimal projective
resolution in the case $n=2$ are constructed. In~\cite{me1} minimal projective resolutions for
$\K_{\lambda}$ for $\lambda\in\Lambda(2,r)$ and
non-minimal projective resolutions of $\K_{\lambda}$ for
$\lambda\in\Lambda(3,r)$ are constructed. The results of both papers depend on      heavy calculations in the algebra $S^+(n,r)$. 

In the present paper we take a more abstract approach. 

Let us denote by $\A_n({\K})$ the Kostant form over the field $\K$ of the universal
enveloping algebra of the Lie algebra $sl^+_n$ of upper triangular
nilpotent matrices. Then $\A_n(\K)$ has a unique one-dimensional
module, which we denote by $\K_{\A}$. In this paper we show that the
construction of (minimal) projective resolutions for $K_{\lambda}$ is
essentially equivalent to the construction of (minimal) projective
resolution for $\K_{\A}$. The last task is much more feasible, since
an explicit presentation of $\A_n(\K)$ can be given   and thus the
results of Anick~\cite{anick} can be applied to the description of an
explicit projective resolution of $\K_{\A}$. It is also worth to
note that $\A_n(\K)$ is a projective limit of finite dimensional
algebras in the case $\charact(\K)=p>0$, and therefore the technique
developed in~\cite{butler-king} can be used for the construction of the
minimal projective resolution of $\K_{\A}$. This line of research will
be followed by us
 in the subsequent papers. 

The general plan of the present paper is as follows. In
Section~\ref{abstract} we collect general technical results, which
will be used in the following section. 
We believe that these results can be applied in more general context,
in particular to the generalised and $q$-Schur algebras. 

Let $G$ be an ordered group with neutral element $\epsilon$.
Denote by $\Gamma\subset G$ the submonoid of non-negative elements of
$G$. For every $\Gamma$-graded algebra $A$ and every $\Gamma$-set
$S$, we construct a family of $\Gamma$-algebras
$$
\left\{ C(X)\middle| X\subset S \right\}
$$
and a family of $\Gamma$-graded algebra homomorphisms
$$
\left\{ \phi^Y_X\colon C(Y)\to C(X)\middle| X\subset Y \subset S \right\}
$$
such that $\phi^Y_X\circ \phi^Z_Y = \phi^Z_X$ for every triple
$X\subset Y \subset Z$ of subsets in $S$. In other words, $C(-)$ is a
presheaf of $\Gamma$-graded algebras.

For every $x\in S$, we construct an exact functor $F_x$ from the
category $\Cat(A,\Gamma)$ of finitely generated $\Gamma$-graded
$A$-modules to the category $\Cat(C(S),\Gamma)$. If $\Gamma$ acts by
automorphisms on $S$, then the functors $F_x$ preserve projective
modules, and thus map projective resolutions into projective
resolutions. If $A_{\epsilon}\cong \K$, then the functors $F_x$ map
minimal projective resolutions into minimal projective resolutions. 

For every $X\subset S$, we can consider each left $C(X)$-module as a
left $C(S)$-module via a homomorphism $\phi^S_X\colon C(S)\to
C(X)$. Thus we get a natural inclusion of categories
$$
(\phi^S_X)^*\colon \Cat(C(X),\Gamma) \to \Cat(C(S),\Gamma).
$$
There is a left adjoint functor to $\left( \phi^S_X \right)^*$
$$
(\phi^S_X)_*= C(X)\otimes_{C(S)}- \colon \Cat(C(S),\Gamma) \to \Cat(C(X),\Gamma),
$$
where we consider $C(X)$ as a right $C(S)$-module via $\phi^S_X$. The
main objective of Section~\ref{criterium} is to get conditions on
$X\subset S$, ensuring that for every left $C(X)$-module $M$ and
every (minimal) projective resolution $P_{\bullet}$ of $\left(\phi^S_X
\right)^*(M)$ the complex $\left(\phi^S_X \right)_*(P_{\bullet})$ is a
(minimal) projective resolution of $M \cong \left( \phi^S_X
\right)_*\left( \phi^S_X \right)^*(M)$. If the both algebras $C(S)$ and
$C(X)$ are artinian and $\left( \phi^S_X \right)_*$ has the above
mentioned property,
then the ideal $\Ker\left( \phi^S_X \right)$ is a \emph{strong
idempotent ideal}. The algebra $C(X)$ is finite dimensional and thus
artinian, if
$X$ is finite and $A$ is locally finite dimensional. But the algebra
$C(S)$ is rarely finite dimensional. To cope with this, we take a two
stage approach. 

We say that $Y\subset S$ is $\Gamma$-convex, if from
$\gamma=\gamma_1\gamma_2$ and $x$, $\gamma x\in Y$ it follows that
$\gamma_2 x\in Y$. In Proposition~\ref{split}, we show that if
$Y$ is a convex $\Gamma$-set then the
functor $\left( \phi^S_Y \right)_*$ is exact and maps (minimal)
projective resolutions into (minimal) projective resolutions. 

Let $Y$ be a finite $\Gamma$-convex subset of $S$ and $X$ a subset of
$Y$. Suppose that $A$ is
locally finite dimensional. In Theorem~\ref{induction}  we give a
criterion for $\Ker\left( \phi^Y_X \right)$ to be a strong idempotent
ideal. 

In Section~\ref{application} we apply the results of  
Section~\ref{abstract}
to $S^+(n,r)$. In our particular case, the algebra $A$ is the Kostant form
$\A_n(\K)$, the  set $S$ is      $\Z^n$ and $\Gamma$ is the submonoid
of $\Z^n$ generated by the elements $(0,\dots,1,-1,\dots,0)$. Then we
show in Theorem~\ref{isomorphismprime} that $C(\Lambda(n,r))\cong
S^+(n,r)$. Here we consider $\Lambda(n,r)$ as a subset $\Z^n$ in the
natural way. Note that this isomorphism gives a description of
$S^+(n,r)$, which is similar to the idempotent presentation of the
algebra $S(n,r)$ obtained by Doty and Giaquinto in~\cite{doty}. 

The set of compositions $\Lambda(n,r)$ is contained in the lager finite
set $\Lambda^1(n,r)$ defined by 
$$
\Lambda^1(n,r) = \left\{ (z_1,z_2,\dots,z_n)\in \Z^n\middle| 0\le
\sum_{i=1}^j z_i,\,1\le j\le r-1; \sum_{i=1}^n z_i = r \right\}.
$$
It turns out, that $\Lambda^1(n,r)$ is a $\Gamma$-convex set
(see~Proposition~\ref{dominance}) and therefore the functor $\left(
\phi^{\Z^n}_{\Lambda^1(n,r)} \right)_*$ is exact and preserves
(minimal) projective resolutions. Moreover, we show in
Theorem~\ref{main1} that 
$\Ker\left( \phi^{\Lambda^1(n,r)}_{\Lambda(n,r)} \right)$ is a
strong idempotent ideal. Hence the composite functor
$$
\left( \phi^{\Lambda^1(n,r)}_{\Lambda(n,r)})\right)_*\circ \left(
\phi^{\Z^n}_{\Lambda^1(n,r)} \right)_* = \left(
\phi^{\Z^n}_{\Lambda(n,r)} \right)_*
$$
preserves projective resolutions of $C(\Lambda(n,r))=S^+(n,r)$-modules
considered as $C(\Z^n)$-modules.

Recall, that $\K_{\A}$ denotes the
unique one-dimensional module over $\A_n(\K)$ and $F_{\lambda}$ is the
functor from $\Cat(\A_n(\K),\Gamma)$ to $\Cat(C(\Z^n),\Gamma)$
associated with $\lambda\in \Z^n$. It follows from the definitions, that
$$
F_{\lambda}(\K_{\A}) \cong \left( \phi^{\Z^n}_{\Lambda(n,r)}
\right)^*(\K_{\lambda}). 
$$
Therefore, if $P_{\bullet}$ is a (minimal) projective resolution of
$\K_{\A}$, then 
$$
\left( \phi^{\Z^n}_{\Lambda(n,r)} \right)_*\circ
F_{\lambda}(P_{\bullet})
$$
is a (minimal) projective resolution of $\K_{\lambda}$.

Now we give a more detailed outline of the paper. In
Section~\ref{eilenberg}, we make a (fairly simple) extension of
Eilenberg machinery on perfect categories to the case of
$\Gamma$-graded algebras. Most results are valid only for
positive monoids, that is for submonoids of non-negative elements
in an ordered group. 

In Section~\ref{skew} we define the  skew product  $A\ltimes_{\Gamma} B$ of a
$\Gamma$-graded algebra $A$ and $\Gamma$-algebra $B$ over the monoid
$\Gamma$. If $\Gamma$ is a group $G$ and $A$ is the group algebra
$\K[G]$, then $\K[G]\ltimes_{G} B$ is isomorphic to the usual skew
product of $B$ with $G$. For every $B$-module $N$ we construct an
exact functor 
$$
-\ltimes_{\Gamma} N\colon \Cat(A,\Gamma) \to \Cat(A\ltimes_{\Gamma} B,
\Gamma),
$$
and establish conditions for $-\ltimes_{\Gamma} N$ to preserve
(minimal) projective resolutions. 

Section~\ref{strong-idempotent} is an      overview of results of  
homological algebra, which we use after. In particular, we recall the
notions of strong idempotent ideal and of heredity ideal and some of
their properties. 

Section~\ref{criterium} is the central section of the first part of
our work. Here we
prove a criterion for $\Ker\left( \phi^Y_X \right)$ to be a strong
idempotent ideal.

In Section~\ref{combinatorics} we prove   the results about
compositions, multi-indices and orderings on $\Z^n$, which we use
later on. 

The Schur algebra $S(n,r)$ and its Borel subalgebra $S^+(n,r)$ as well
as the algebra $\A_n(\K)$ are considered in Sections~\ref{schur}
and~\ref{kostant}, respectively.

Finally, in Section~\ref{main} we prove that $\Ker\left(
\phi^{\Lambda^1(n,r)}_{\Lambda(n,r)} \right)$ is the strong idempotent
ideal and that $C(\Lambda(n,r))\cong S^+(n,r)$.

\section{Skew product over monoids and strong idempotent ideals}
\label{abstract}
\subsection{Graded rings and modules}
\label{eilenberg}
In this subsection we recollect results about graded algebras and
graded modules, that were essentially proved in Eilenberg's
paper~\cite{eilenbergMR}.

Let $\Gamma$ be a monoid with neutral element $\epsilon$ and $A$ a
$\Gamma$-graded associative algebra, that is
$$
A = \bigoplus\limits_{\gamma\in \Gamma} A_{\gamma},
$$
where $A_{\gamma}$ is a subspace of $A$, for each $\gamma\in \Gamma$,
and if $a_1\in A_{\gamma_1}$ and $a_2\in A_{\gamma_2}$ then $a_1a_2
\in A_{\gamma_1\gamma_2}$.
We will assume in addition that the unity $e_A$ of $A$ is an element
of $A_{\epsilon}$. 

A left $A$-module $M$ is $\Gamma$-graded if $M=
\bigoplus\limits_{\gamma\in \Gamma} M_{\gamma}$, where each
$M_{\gamma}$ is a vector subspace of $M$, and for $a\in
A_{\gamma_1}$ and $m\in M_{\gamma_2}$ we have $am\in
M_{\gamma_1\gamma_2}$.

We will consider the category $A$-$\Gamma$-gr of left $\Gamma$-graded
$A$-modules and $A$-module homomorphisms respecting                 
    grading, that is a map of $A$-modules $f\colon M_1\to M_2$ is in
$A$-$\Gamma$-gr if $f(M_{1,\gamma})\subset M_{2,\gamma}$ for all
$\gamma\in \Gamma$.
Define the radical $\rad(A)$ of $A$ as the intersection of all
maximal graded left ideals of $A$. We also use the notion of ungraded radical
$\Rad(A)$ of $A$, which is defined as the intersection of all
(not-necessarily graded) maximal left ideals of $A$. 
In the following, we determine  the conditions under which
$\rad(A)$ and $\Rad(A)$ coincide. 
\begin{definition}
We say that the monoid $\Gamma$ is positive, if it has the property
$$
\gamma_1\gamma_2 = {\epsilon} \Rightarrow \gamma_1={\epsilon} \mbox{
and }\gamma_2={\epsilon}.
$$
\end{definition}

\begin{lemma}
 \label{maximal_graded}
 Let $\Gamma$ be a positive monoid and $A$ a $\Gamma$-graded
 algebra. If $m$ is a maximal graded left ideal of $A$, then
 $$
 m = m_{\epsilon} \oplus \bigoplus_{\gamma\ne {\epsilon}} A_{\gamma}
 $$
 where $m_{\epsilon}$ is a maximal left ideal of $A_{\epsilon}$.
\end{lemma}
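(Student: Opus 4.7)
The strategy is routine given the graded structure, with positivity doing all the essential work. Since $m$ is a graded left ideal, automatically $m = \bigoplus_\gamma m_\gamma$ where $m_\gamma := m\cap A_\gamma$, so the content of the lemma is twofold: (a) $m_\gamma = A_\gamma$ for every $\gamma\ne\epsilon$, and (b) $m_\epsilon$ is maximal as a left ideal of $A_\epsilon$. I would handle these in turn.

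For (a), write $A_{>\epsilon} := \bigoplus_{\gamma\ne\epsilon} A_\gamma$. I would first observe that positivity of $\Gamma$ makes $A_{>\epsilon}$ a two-sided graded ideal of $A$: given $a\in A_{\gamma_1}$ and $b\in A_{\gamma_2}$ with at least one of $\gamma_1,\gamma_2\ne\epsilon$, we have $ab\in A_{\gamma_1\gamma_2}$, and $\gamma_1\gamma_2=\epsilon$ would force both factors to equal $\epsilon$. Hence $m + A_{>\epsilon}$ is a graded left ideal containing $m$. To see it is still proper, suppose $1 = m' + a$ with $m'\in m$ and $a\in A_{>\epsilon}$; taking the $\epsilon$-component and using that $m$ is graded yields $1 = m'_\epsilon\in m$, contradicting $m\ne A$. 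By maximality of $m$ we conclude $A_{>\epsilon}\subseteq m$, so $m = m_\epsilon \oplus A_{>\epsilon}$. In particular, $m_\epsilon\ne A_\epsilon$, since otherwise $1\in m$.

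For (b), I would lift left ideals of $A_\epsilon$ back to $A$: given any left ideal $J$ of $A_\epsilon$, set $\widetilde{J} := J \oplus A_{>\epsilon}$. Checking that $\widetilde{J}$ is a graded left ideal of $A$ reduces again to positivity, since for $a\in A_\gamma$ and $j\in J$ one has $aj\in J$ when $\gamma=\epsilon$ (as $J$ is an $A_\epsilon$-ideal) and $aj\in A_{>\epsilon}\subseteq \widetilde{J}$ when $\gamma\ne\epsilon$; closure of $\widetilde{J}$ under multiplication by $A_{>\epsilon}$ is automatic. Now if $m_\epsilon \subsetneq J \subseteq A_\epsilon$, then $\widetilde{J}$ is a graded left ideal strictly containing $m = m_\epsilon\oplus A_{>\epsilon}$, so by maximality of $m$ we must have $\widetilde{J}=A$, i.e.\ $J = A_\epsilon$. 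Thus $m_\epsilon$ is maximal in $A_\epsilon$.

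The argument has no real obstacle; the only place one must be a bit careful is in verifying that positivity is genuinely needed to promote $A_{>\epsilon}$ to an ideal and to make the graded decomposition of $m$ respect the $\epsilon$-component cleanly. Without positivity, a product of elements of positive degree could land in degree $\epsilon$ and both the ideal property of $A_{>\epsilon}$ and the proper-containment argument in step (a) would fail.
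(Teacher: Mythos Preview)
Your proof is correct and follows essentially the same idea as the paper: both rely on the observation (using positivity) that $J\oplus\bigoplus_{\gamma\ne\epsilon}A_\gamma$ is a proper graded left ideal of $A$ whenever $J\subsetneq A_\epsilon$ is a left ideal, together with maximality of $m$. The paper is marginally more economical, handling your parts (a) and (b) in a single stroke by choosing a maximal left ideal $n\supseteq m_\epsilon$ of $A_\epsilon$ and noting that $m\subseteq n\oplus\bigoplus_{\gamma\ne\epsilon}A_\gamma$ forces equality.
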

\begin{proof}
 Let $n$ be a maximal left ideal of $A_{\epsilon}$ containing
 $m_{\epsilon}$. It is clear, that 
 $$
 n' = n \oplus \bigoplus_{\gamma\ne {\epsilon}} A_{\gamma}
 $$
 is a $\Gamma$-graded left ideal of $A$ and that $m\subset n'$. Since $m$ is
 maximal, it follows that $m = n'$. In particular, $n = m_{\epsilon}$.  
\end{proof}

\begin{corollary}
 \label{subset}
Suppose $\Gamma$ is positive. Then
$\bigoplus_{\gamma\ne
{\epsilon}} A_{\gamma}$ is a subset of $\rad(A)$. 
\end{corollary}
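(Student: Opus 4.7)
The plan is to obtain the corollary as an immediate consequence of Lemma~\ref{maximal_graded}, using only the definition of $\rad(A)$ as the intersection of all maximal graded left ideals of $A$.

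First I would fix an arbitrary maximal graded left ideal $m$ of $A$. By Lemma~\ref{maximal_graded} (applied under the assumption that $\Gamma$ is positive), we have
$$
m = m_{\epsilon} \oplus \bigoplus_{\gamma\ne \epsilon} A_{\gamma},
$$
for some maximal left ideal $m_{\epsilon}$ of $A_{\epsilon}$. In particular, the summand $\bigoplus_{\gamma \ne \epsilon} A_{\gamma}$ is contained in $m$. Since $m$ was arbitrary, this inclusion holds for every maximal graded left ideal of $A$, so
$$
\bigoplus_{\gamma\ne \epsilon} A_{\gamma} \;\subset\; \bigcap_{m} m \;=\; \rad(A),
$$
which is the desired conclusion.

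The only point that might require a brief mention is the degenerate case in which $A$ has no maximal graded left ideals at all; in that case $\rad(A) = A$ by convention, and the inclusion is trivial. There is no real obstacle here: the whole content of the corollary is packaged into Lemma~\ref{maximal_graded}, and positivity of $\Gamma$ is used only through that lemma.
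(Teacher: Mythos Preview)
Your proof is correct and follows exactly the same approach as the paper: both deduce the result directly from Lemma~\ref{maximal_graded} by noting that $\bigoplus_{\gamma\ne\epsilon} A_{\gamma}$ lies in every maximal graded left ideal and hence in their intersection $\rad(A)$. Your treatment is slightly more detailed (you spell out the intersection and mention the degenerate case), but the argument is the same.
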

\begin{proof}
 By Lemma~\ref{maximal_graded} $\bigoplus_{\gamma\ne {\epsilon}}A_{\gamma}$ is
 a subset of each maximal graded left ideal of $A$. 
\end{proof}

\begin{corollary}
 \label{ungraded} 
 Let $\Gamma$ be a positive monoid and $A$ a $\Gamma$-graded
 algebra. If $m$ is a maximal graded left ideal of $A$, then $m$ is a
 maximal left ideal of $A$ in the ungraded sense.  
\end{corollary}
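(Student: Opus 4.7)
My plan is to deduce ungraded maximality directly from the explicit description of $m$ given by Lemma~\ref{maximal_graded}. The point is that modulo $m$, only the $\epsilon$-homogeneous component of any element survives, and this component already lives in the maximal $A_{\epsilon}$-ideal $m_{\epsilon}$ from that lemma; so any strictly larger (possibly ungraded) ideal must contribute a new degree-$\epsilon$ element and thereby exhaust $A_{\epsilon}$, forcing it to contain $e_{A}$.

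More concretely, I would let $I$ be an arbitrary left ideal of $A$ with $m \subsetneq I$ and aim to show $I = A$. Pick $a \in I \setminus m$ and write its homogeneous decomposition $a = \sum_{\gamma \in \Gamma} a_{\gamma}$. By Lemma~\ref{maximal_graded}, $\bigoplus_{\gamma \ne \epsilon} A_{\gamma} \subset m$, so $a - a_{\epsilon} \in m \subset I$, and therefore $a_{\epsilon} \in I$. If $a_{\epsilon}$ lay in $m_{\epsilon}$, then $a$ would already be in $m$, contradicting the choice of $a$. Hence $a_{\epsilon} \in A_{\epsilon} \setminus m_{\epsilon}$.

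Now $I \cap A_{\epsilon}$ is a left ideal of $A_{\epsilon}$ containing $m_{\epsilon}$ strictly, since $a_{\epsilon}$ witnesses the strict containment. By maximality of $m_{\epsilon}$ in $A_{\epsilon}$ (again from Lemma~\ref{maximal_graded}), we conclude $I \cap A_{\epsilon} = A_{\epsilon}$. In particular, the unit $e_{A} \in A_{\epsilon}$ lies in $I$, so $I = A$.

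There is not really any hard step here: the whole proof is an exercise in extracting the $\epsilon$-homogeneous part and invoking the decomposition of $m$ already established. The only subtlety worth checking explicitly is that positivity of $\Gamma$ is used through Lemma~\ref{maximal_graded} (for the inclusion $\bigoplus_{\gamma \ne \epsilon} A_{\gamma} \subset m$), which is exactly what lets us pass from an arbitrary $a \in I$ to its $\epsilon$-component without leaving $I$.
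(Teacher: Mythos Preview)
Your proof is correct and rests on the same key input as the paper's, namely the decomposition $m = m_{\epsilon} \oplus \bigoplus_{\gamma\ne\epsilon} A_{\gamma}$ from Lemma~\ref{maximal_graded}. The paper packages the conclusion slightly differently: rather than taking a strictly larger ideal $I$ and chasing an element down to its $\epsilon$-component, it observes that $\bigoplus_{\gamma\ne\epsilon} A_{\gamma}$ is a two-sided ideal, so there is a surjection $A\to A_{\epsilon}$, and then $A/m \cong A_{\epsilon}/m_{\epsilon}$ is a simple $A_{\epsilon}$-module, hence a simple $A$-module via this surjection. Your argument unwinds exactly this simplicity statement at the level of elements; the two are equivalent and equally short.
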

\begin{proof}
 Since $\bigoplus_{\gamma\ne
{\epsilon}} A_{\gamma}$ is an ideal of $A$, we have a surjective homomorphism
$A\to A_{\epsilon}$. As    $m_{\epsilon}$ is a maximal ideal of
$A_{\epsilon}$, the
left  $A_{\epsilon}$-module $A_{\epsilon}/m_{\epsilon}$ is simple. But
then $A_{\epsilon}/m_{\epsilon}$ is
simple as an $A$-module. Since $A_{\epsilon}/m_{\epsilon}\cong A/m$, we get that
$m$ is a maximal left ideal of $A$. 
\end{proof}
\begin{corollary}
 \label{structureofradical} Suppose $\Gamma$ is positive, then
 $\rad(A) = \Rad(A_{\epsilon}) \oplus
 \bigoplus_{\gamma\ne {\epsilon}} A_{\gamma}$. 
\end{corollary}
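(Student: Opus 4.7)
The plan is to combine Lemma~\ref{maximal_graded} with its converse to obtain a bijection between maximal graded left ideals of $A$ and maximal left ideals of $A_\epsilon$, and then intersect.

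First I would verify the converse to Lemma~\ref{maximal_graded}: given any maximal left ideal $n$ of $A_\epsilon$, the subspace
$$
n' = n \oplus \bigoplus_{\gamma\neq\epsilon} A_\gamma
$$
is a maximal graded left ideal of $A$. That $n'$ is a graded left ideal follows from $\bigoplus_{\gamma\neq\epsilon} A_\gamma$ being a two-sided ideal (since $\Gamma$ is positive) together with $n$ being a left ideal of $A_\epsilon$. Maximality follows because $A/n' \cong A_\epsilon/n$ as an $A$-module (where $\bigoplus_{\gamma\neq\epsilon}A_\gamma$ acts as zero), and this quotient is simple because $n$ is maximal in $A_\epsilon$; hence $n'$ is even maximal in the ungraded sense, a fortiori maximal among graded left ideals.

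Combining this with Lemma~\ref{maximal_graded}, the assignment $m \mapsto m_\epsilon$ is a bijection between the maximal graded left ideals of $A$ and the maximal left ideals of $A_\epsilon$, with inverse $n\mapsto n\oplus\bigoplus_{\gamma\neq\epsilon}A_\gamma$. Taking the intersection over all such $m$ then yields
$$
\rad(A) = \bigcap_{m} m = \Bigl(\bigcap_{n\text{ max in }A_\epsilon} n\Bigr) \oplus \bigoplus_{\gamma\neq\epsilon} A_\gamma = \Rad(A_\epsilon) \oplus \bigoplus_{\gamma\neq\epsilon} A_\gamma,
$$
where distributing the intersection over the direct sum is legitimate because the second summand is fixed and does not vary with $m$.

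The proof is essentially a bookkeeping exercise once Lemma~\ref{maximal_graded} is available; the only point that needs care is ensuring both directions of the bijection hold, so that the intersection on the $A_\epsilon$ side truly ranges over all maximal left ideals of $A_\epsilon$ and produces $\Rad(A_\epsilon)$ rather than some smaller intersection. Positivity of $\Gamma$ is used precisely once, to guarantee that $\bigoplus_{\gamma\neq\epsilon} A_\gamma$ is a two-sided ideal, which is what lets both constructions work.
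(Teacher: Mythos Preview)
Your proof is correct and follows essentially the same approach as the paper: both establish the correspondence between maximal graded left ideals of $A$ and maximal left ideals of $A_\epsilon$ (one direction being Lemma~\ref{maximal_graded}, the other being the construction $n\mapsto n\oplus\bigoplus_{\gamma\neq\epsilon}A_\gamma$), and then use this to compute $\rad(A)$. The only cosmetic difference is that you package this as a bijection and take a single intersection, whereas the paper verifies the two inclusions $\rad(A)_\epsilon\subset\Rad(A_\epsilon)$ and $\Rad(A_\epsilon)\subset\rad(A)_\epsilon$ separately.
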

\begin{proof}
 The radical $\rad(A)$ is a graded ideal of $A$, thus
 $$
 \rad(A) = \bigoplus_{\gamma \in \Gamma}\rad(A)_{\gamma}.
 $$
 Since by Corollary~\ref{subset} $\bigoplus_{\gamma\ne {\epsilon}}A_{\gamma}$ is a subset of
 $\rad(A)$ we have that $\rad(A)_{\gamma} = A_{\gamma}$ for $\gamma\ne
 {\epsilon}$. Thus it is enough to check that $\rad(A)_{\epsilon}=
 \Rad(A_{\epsilon})$. 

 Let $m$ be a maximal left ideal of $A_{\epsilon}$. Then 
 $$
 m'= m\oplus \bigoplus_{\gamma\ne {\epsilon}} A_{\gamma}
 $$
 is a maximal graded left ideal of $A$. Now
 $$
 \rad(A)_{\epsilon} = \rad(A)\cap A_{\epsilon} \subset m'\cap A_{\epsilon} = m.
 $$ 
 As    $m$ was an arbitrary maximal left ideal of $A_{\epsilon}$ and
 $\Rad(A_{\epsilon})$ is
 the intersection of all such left ideals, we get that $\rad(A)_{\epsilon}\subset
 \Rad(A_{\epsilon})$. 

 Now, let $m$ be a maximal graded left ideal of $A$. Then 
 by Lemma~\ref{maximal_graded}
 $$
 m= m_{\epsilon} \oplus \bigoplus_{\gamma\ne {\epsilon}}A_{\gamma},  
 $$
 where $m_{\epsilon}$ is a maximal left ideal of $A_{\epsilon}$. Therefore 
 $$
 \Rad(A_{\epsilon}) \subset m_{\epsilon} \subset m
 $$
 and since $m$ was an arbitrary maximal graded left ideal of $A$
 $$
 \Rad(A_{\epsilon}) \subset \rad(A) \cap A_{\epsilon} = \rad(A)_{\epsilon}. 
 $$
\end{proof}
For $\gamma\in \Gamma$ we denote by $A(\gamma)$ the left $\Gamma$-graded
module, defined by
$$
A(\gamma)_{\gamma'} := A_{\gamma'\gamma}
$$
and the action of $A$ is given by multiplication:
\begin{align*}
 A_{\gamma_1}\otimes A(\gamma)_{\gamma_2} & \to
 A(\gamma)_{\gamma_1\gamma_2}\\
 a_1\otimes a_2&\mapsto a_1a_2.
\end{align*}
\begin{definition}
 We call a direct sum of modules of the form $A(\gamma)$ \emph{a free}
$\Gamma$-graded $A$-module. A direct summand in the category of
$\Gamma$-graded $A$-modules of a free $\Gamma$-graded $A$-module is
called \emph{a projective $\Gamma$-graded $A$-module}.\end{definition}

Let $P$ be a projective $\Gamma$-graded $A$-module generated by a
single element $x\ne 0$ of degree $\gamma\in \Gamma$. Let $F$ be a
free $\Gamma$-graded module with a basis consisting of an element
$y$ of degree $\gamma$. Then there is an epimorphism $\phi\colon F\to
P$, such that $\phi y = x$. Since $P$ is projective, it follows that
$\Ker(\phi)$ is a direct summand of $F$. Consequently there exists an
idempotent $e\in A_{\epsilon}$ such that $P\cong Aex$. The degree
$\gamma$ of $x$ is uniquely determined by $P$. 

\begin{definition}
 A left $\Gamma$-graded $A$-module is called \emph{quasi-free} if it
 is a direct sum of projective modules each of which is generated by
 a single (homogeneous) element. 
\end{definition}

In the following we assume, that 
\begin{itemize}
 \item the monoid $\Gamma$ is positive;
 \item the ring $\bar{A} = A/ \rad(A) = A_{\epsilon} / \Rad(A_{\epsilon})$  is
 semi-simple;
 \item each idempotent in $\bar{A}$ can be lifted to $A_{\epsilon}$. 
\end{itemize}

Following Eilenberg~\cite{eilenbergMR} we say that the subcategory
$\Cat$ of $A$-$\Gamma$-gr is \emph{perfect} if 
\begin{enumerate}
 \item $\Cat$ is full.
 \item If $\phi\colon M\to N$ is an epimorphism and $M\in \Cat$ then
 $N\in \Cat$. 
 \item $A\in \Cat$.
 \item If $P$ is quasi-free and $P/\rad(A)P\in \Cat$, then $P\in \Cat$. 
 \item If $M\in \Cat$ and $\rad(A)M = M$, then $M=0$.  
\end{enumerate}

Suppose $\Cat$ is a perfect subcategory of $A$-$\Gamma$-gr.
\begin{definition}
 An epimorphism $\phi\colon P \to M$ in $\Cat$ is called
 \emph{minimal} if $P$ is projective and $\Ker(\phi)\subset
 \rad(A) P$. 
\end{definition}

\begin{proposition}
 \label{minimalepimorphism}
 Every $M\in \Cat$ admits a minimal epimorphism $\phi\colon P\to M$.
 If $\phi'\colon P'\to M$ is another minimal epimorphism, then there
 exists a homomorphism $\pi\colon P\to P'$ such that $\phi'\pi = \phi$,
 and each such homomorphism is an isomorphism. 
\end{proposition}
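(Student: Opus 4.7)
My plan is to transfer the classical projective-cover construction due to Eilenberg to the graded setting, using condition (v) in the definition of a perfect subcategory — the graded Nakayama statement $\rad(A)M=M\Rightarrow M=0$ for $M\in\Cat$ — as the main driver.

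For existence, I would first form $\overline M := M/\rad(A)M$, which by Corollary~\ref{structureofradical} is a graded module over the semisimple ring $\bar A$. Decomposing $\overline M$ into simple graded summands of the form $\bar A \bar e_i(\gamma_i)$, with each $\bar e_i$ a primitive idempotent of $\bar A$ and $\gamma_i\in\Gamma$, I lift the $\bar e_i$ to orthogonal idempotents $e_i\in A_\epsilon$ by the standing idempotent-lifting assumption, and put $P := \bigoplus_i A e_i(\gamma_i)$, which is quasi-free by construction. The canonical identification $P/\rad(A)P \cong \overline M$ then lifts through the surjection $M \to \overline M$, by projectivity of $P$, to a graded homomorphism $\phi\colon P\to M$. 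Since $\phi(P) + \rad(A)M = M$, the quotient $M/\phi(P)\in\Cat$ (by condition (ii)) satisfies $\rad(A)\bigl(M/\phi(P)\bigr) = M/\phi(P)$; condition (v) then forces $\phi$ to be surjective, and the inclusion $\Ker(\phi)\subset \rad(A)P$ is built in by construction.

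For essential uniqueness, given another minimal epimorphism $\phi'\colon P'\to M$, projectivity of $P$ yields a lift $\pi\colon P\to P'$ with $\phi'\pi = \phi$. To see $\pi$ is surjective, note $\pi(P) + \Ker(\phi') = P'$ since $\phi'\pi$ is onto, hence $\pi(P) + \rad(A)P' = P'$ by minimality of $\phi'$, and condition (v) applied to $P'/\pi(P)\in\Cat$ gives $\pi(P) = P'$. To see $\pi$ is injective, split the surjection $\pi$ using projectivity of $P'$ as $P = K\oplus P''$ with $K := \Ker(\pi)$. Minimality of $\phi$ and $\phi'$ makes each of them induce an isomorphism on tops (their kernels are contained in, hence equal to their preimages of, $\rad(A)P$ and $\rad(A)P'$ respectively), so the map $\bar\pi\colon P/\rad(A)P \to P'/\rad(A)P'$ determined by $\bar\phi'\bar\pi = \bar\phi$ is itself an isomorphism. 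Decomposing the source as $K/\rad(A)K \oplus P''/\rad(A)P''$ then gives $K/\rad(A)K = 0$, that is, $\rad(A)K = K$. Finally, $K$, being a direct summand of the quasi-free module $P$, is itself quasi-free, so condition (iv) places $K$ in $\Cat$, and condition (v) forces $K=0$.

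The main technical obstacle I anticipate is justifying that a direct summand of a quasi-free module is again quasi-free. This is a graded Krull--Schmidt-type statement, resting on positivity of $\Gamma$ (which keeps each $\End(A e(\gamma))$ concentrated in degree $\epsilon$), semisimplicity of $\bar A$, and the lifting of idempotents from $\bar A$ to $A_\epsilon$. Granted this, the remaining work is a carefully arranged sequence of appeals to conditions (ii), (iv), and (v), with the graded Nakayama condition (v) doing essentially all of the heavy lifting at each stage.
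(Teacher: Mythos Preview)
Your argument is correct and is exactly the Eilenberg argument the paper invokes; the paper's own proof is simply a pointer to \cite[Proposition~3]{eilenbergMR} with $\N$ replaced by $\Gamma$, so there is nothing to compare on the level of strategy.

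One remark: the ``technical obstacle'' you flag---that a direct summand of a quasi-free module is again quasi-free---is not actually needed. In your injectivity step you have $P = K\oplus P''$ with $P\in\Cat$, so $K$ is a quotient of $P$ and condition~(ii) already gives $K\in\Cat$; then $\rad(A)K=K$ and condition~(v) force $K=0$. You never have to pass through~(iv), and in particular you do not need any graded Krull--Schmidt statement for the uniqueness half. (Orthogonality of the lifted idempotents in the existence half is likewise inessential: each summand $Ae_i(\gamma_i)$ only requires lifting a single primitive idempotent of $\bar A$.)
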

\begin{proof}
 The proof is word by word repetition of the proof
 of~\cite[Proposition~3]{eilenbergMR}, with understanding that all
 $\N$-graded modules have to be replaced by $\Gamma$-graded modules. 
\end{proof}

Suppose further, that $\Cat$ satisfies the additional condition
\begin{itemize}
 \item If $M\in \Cat$ and $N\subset M$, then $N\in \Cat$. 
\end{itemize}

Then, as usual, by iterating the minimal epimorphism construction we
get for each $M\in \Cat$ a projective resolution 
$$
\dots \to P_n \stackrel{d_n}{\to} P_{n-1} \to \dots
\stackrel{d_1}{\to} P_0 \stackrel{\varepsilon}{\to} M \to 0,
$$
such that $\im(d_i) \subset \rad(A)P_i$ for $i=0,1,\dots$. This
projective resolution is called \emph{minimal}. 

The following two results are formal consequences of the definition
and Proposition~\ref{minimalepimorphism}. For proofs the reader is
referred to~\cite[Proposition~7 and Theorem~8]{eilenbergMR}. 
\begin{proposition}
 Let $P_{\bullet}$ and $P'_{\bullet}$ be minimal projective resolutions of a
 module $M\in \Cat$. Then there exists a map $f\colon P_{\bullet} \to
 P'_{\bullet}$ over the identity map of $M$ and each such map is an
 isomorphism. 
\end{proposition}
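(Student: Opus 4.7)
The plan is to prove both existence and uniqueness-up-to-isomorphism in one inductive sweep, reducing at each level to Proposition~\ref{minimalepimorphism} applied to a suitable pair of minimal epimorphisms.

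First I would construct $f_\bullet$ by the standard inductive lifting argument: set $P_{-1}=P'_{-1}=M$ and $f_{-1}=\mathrm{id}_M$; then, having constructed $f_{n-1}$ with $d'_{n-1}f_{n-1}=f_{n-2}d_{n-1}$, use the projectivity of $P_n$ in $\Cat$ together with the surjection $d'_n\colon P'_n\to \Ker(d'_{n-1})$ and the fact that $f_{n-1}d_n$ lands in $\Ker(d'_{n-1})$ (which follows from $d'_{n-1}f_{n-1}d_n=f_{n-2}d_{n-1}d_n=0$) to choose a lift $f_n\colon P_n\to P'_n$. This produces the desired chain map over $\mathrm{id}_M$.

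For the isomorphism statement, I would argue by induction on $n$ that each $f_n$ is an isomorphism. For the base case, note that $\varepsilon\colon P_0\to M$ and $\varepsilon'\colon P'_0\to M$ are both minimal epimorphisms, and $f_0$ satisfies $\varepsilon' f_0 = \varepsilon$; hence the second clause of Proposition~\ref{minimalepimorphism} forces $f_0$ to be an isomorphism. For the inductive step, suppose $f_{n-1}$ is an isomorphism. Then $f_{n-1}$ restricts to an isomorphism $g\colon \Ker(d_{n-1})\to \Ker(d'_{n-1})$ (using the convention $d_{-1}=\varepsilon$, $d'_{-1}=\varepsilon'$). The commutativity $d'_n f_n = f_{n-1}d_n$ and the fact that $d_n$ and $d'_n$ corestrict to minimal epimorphisms $P_n\twoheadrightarrow \Ker(d_{n-1})$ and $P'_n\twoheadrightarrow \Ker(d'_{n-1})$ (by the defining property $\im d_n\subset \rad(A)P_{n-1}$ of a minimal resolution, which is exactly the minimality condition for the projective cover of the syzygy) yield a commuting square
\[
\begin{array}{ccc}
P_n & \xrightarrow{d_n} & \Ker(d_{n-1}) \\
\downarrow f_n & & \downarrow g \\
P'_n & \xrightarrow{d'_n} & \Ker(d'_{n-1}).
\end{array}
\]
Composing with $g^{-1}$ exhibits $f_n$ as a morphism between two minimal epimorphisms covering the same map onto $\Ker(d_{n-1})$, so again Proposition~\ref{minimalepimorphism} forces $f_n$ to be an isomorphism.

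The only non-routine point is verifying that $d_n\colon P_n\to \Ker(d_{n-1})$ is genuinely a \emph{minimal} epimorphism in the sense of the definition preceding Proposition~\ref{minimalepimorphism}; this is immediate from the fact that $\Ker(d_n|^{\Ker(d_{n-1})}) = \Ker(d_n)=\im(d_{n+1})\subset \rad(A)P_n$, which is precisely the minimality hypothesis on $P_\bullet$. With this observation in hand, each inductive step reduces mechanically to the uniqueness clause of Proposition~\ref{minimalepimorphism}, and the claim follows.
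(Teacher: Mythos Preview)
Your argument is correct and is precisely the standard one: inductive lifting via projectivity to build $f_\bullet$, followed by induction using Proposition~\ref{minimalepimorphism} at each syzygy to force each $f_n$ to be an isomorphism. The paper does not give its own proof here; it merely remarks that the result is a formal consequence of the definition and Proposition~\ref{minimalepimorphism} and cites \cite[Proposition~7]{eilenbergMR}, which is exactly the route you have written out.
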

\begin{theorem}
 Let $M\in \Cat$ and let $P_{\bullet}$ be a projective resolution of $M$. Then
 $P_{\bullet}$ decomposes into a direct sum $P_{\bullet} =
 \bar{P}_{\bullet} \oplus W_{\bullet}$ of
 subcomplexes such that $\bar{P}_{\bullet}$ is a minimal projective resolution of
 $M$, while $W$ is a projective resolution of the zero module. 
\end{theorem}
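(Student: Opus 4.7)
My plan is to construct a minimal projective resolution $\bar{P}_\bullet$ of $M$ in its own right (by iterating Proposition~\ref{minimalepimorphism}, available because the extra closure axiom makes all syzygies of $M$ lie in $\Cat$), and then to exhibit it as a direct-summand subcomplex of $P_\bullet$. The complement will automatically be a projective resolution of zero.

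The heart of the argument is the following. Using projectivity of the $\bar{P}_n$ and $P_n$ in the $\Gamma$-graded category, build by standard inductive lifting along the differentials two chain maps $f_\bullet\colon \bar{P}_\bullet\to P_\bullet$ and $g_\bullet\colon P_\bullet\to \bar{P}_\bullet$ over the identity of $M$. Any two chain maps between projective resolutions that agree on $M$ are chain homotopic, so $g_\bullet f_\bullet - \mathrm{id}_{\bar{P}_\bullet}$ has the form $ds+sd$ for some graded $A$-linear homotopy $s$. Combined with the minimality $\im d_n\subset \rad(A)\bar{P}_{n-1}$ and the $A$-linearity of $s$, this forces $(g_n f_n)(x)-x\in \rad(A)\bar{P}_n$ for every $x\in \bar{P}_n$. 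Applying the uniqueness clause of Proposition~\ref{minimalepimorphism} to the minimal epimorphism $\bar{P}_n\to \bar{P}_n/\rad(A)\bar{P}_n$, each $g_n f_n$ is therefore an isomorphism of $\bar{P}_n$.

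With this in hand, $f_n$ is a split monomorphism with retraction $r_n := (g_n f_n)^{-1}\circ g_n$, and the $r_n$ assemble into a chain map because the endomorphisms $g_n f_n$ commute with the differentials of $\bar{P}_\bullet$ (they form the components of a chain map). Setting $W_n:=\Ker(r_n)$ produces a subcomplex $W_\bullet\subset P_\bullet$ of projective modules with $P_\bullet = f_\bullet(\bar{P}_\bullet)\oplus W_\bullet$; $W_\bullet$ is exact (being the complement of an exact subcomplex in an exact complex) and its augmentation to $M$ vanishes, so it resolves zero. The main obstacle, beyond routine lifting, is the step $g_n f_n - \mathrm{id}\in \rad(A)\bar{P}_n$ that makes Proposition~\ref{minimalepimorphism} applicable; once it is established, everything else is formal, exactly as in Eilenberg's proof of Theorem~8 in~\cite{eilenbergMR}.
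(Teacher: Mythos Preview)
Your proposal is correct and follows exactly the approach the paper defers to: the paper gives no proof of its own but cites Eilenberg's Theorem~8 in~\cite{eilenbergMR}, and you have faithfully reconstructed that argument (comparison maps over $\mathrm{id}_M$, homotopy $gf\simeq\mathrm{id}$, minimality forcing $g_nf_n-\mathrm{id}$ into $\rad(A)\bar P_n$, then splitting via Proposition~\ref{minimalepimorphism}).
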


From now on      we restrict ourselves to the case when $\Gamma$ is
an ordered monoid and the neutral element ${\epsilon}$
of $\Gamma$ is the least   element of $\Gamma$. 
Note that all such monoids are positive, since
$$
\gamma_1\ne \epsilon,\ \gamma_2\ne \epsilon \Rightarrow
\gamma_1>\epsilon,\ \gamma_2>\epsilon \Rightarrow \gamma_1\gamma_2>
\epsilon \Rightarrow \gamma_1\gamma_2 \ne \epsilon.
$$

A $\Gamma$-graded $A$-module $M$ is said to be \emph{locally finitely
generated} if $M$ is generated by a set $X$ of (homogeneous) elements
such that the sets $X\cap M_{\gamma}$ are finite, for all
$\gamma\in\Gamma$. 

\begin{theorem}
 \label{perfectcategory} 
 The category $\Cat(\Gamma,A)$ of all locally finitely generated
 $\Gamma$-graded left $A$-modules is a perfect subcategory of
 $A$-$\Gamma$-gr. It is closed under taking subobjects if and only if
 $A_{\epsilon}$ is a left Noetherian ring and each $A_{\gamma}$ is
 finitely generated as a left $A_{\epsilon}$-module. 
\end{theorem}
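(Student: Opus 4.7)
The plan is to verify the five axioms of a perfect subcategory and then prove the equivalence for closure under subobjects. Axioms~(1)--(3) reduce to immediate observations: $\Cat(\Gamma,A)$ is full by definition; an epimorphism sends a locally finite generating set to one for the target; and $A$ is cyclic, generated by $e_A$ in degree $\epsilon$. For axiom~(4), given a quasi-free $P=\bigoplus_{i\in I}P_i$ with $P_i$ cyclic generated by $x_i$ of degree $\gamma_i$ and idempotent $e_i\in A_\epsilon$, I would invoke Corollary~\ref{structureofradical} to show that each $P_i/\rad(A)P_i$ is concentrated in degree $\gamma_i$ with value $\bar{A}\bar{e}_i\ne 0$ (a nonzero idempotent cannot lie in the Jacobson radical). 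Local finite generation of $P/\rad(A)P$ then forces $\{i:\gamma_i=\gamma\}$ to be finite for each $\gamma$, because $\bar{A}$ is semi-simple and an infinite direct sum of nonzero $\bar{A}$-modules fails to be finitely generated; the elements $\{x_i\}$ then form a locally finite generating set of $P$.

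Axiom~(5) is a graded Nakayama argument. For $M\in\Cat(\Gamma,A)$ nonzero with locally finite generating set $X$, I would pick a minimal degree $\gamma_0\in\deg(X)$. Because $\epsilon$ is the least element of $\Gamma$, the $A$-action cannot decrease degree; combined with minimality of $\gamma_0$, only those $x\in X$ with $\deg x=\gamma_0$ contribute to $M_{\gamma_0}$, so $M_{\gamma_0}$ is $A_\epsilon$-generated by the finite set $X\cap M_{\gamma_0}$, and $(\rad(A)M)_{\gamma_0}$ collapses to $\Rad(A_\epsilon)M_{\gamma_0}$ since no strictly lower degrees of $M$ are nonzero. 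Non-commutative Nakayama applied to the finitely generated $A_\epsilon$-module $M_{\gamma_0}$ yields $M_{\gamma_0}=0$, contradicting the choice of $\gamma_0$.

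For the equivalence, necessity is handled by plugging $A$ itself into $\Cat(\Gamma,A)$ and applying closure under subobjects to graded left ideals. Given a left ideal $J\subseteq A_\epsilon$, the graded left ideal $AJ$ satisfies $(AJ)_\epsilon=J$; its minimum-degree generators (from any locally finite generating set, by the same degree-bookkeeping as in axiom~(5)) $A_\epsilon$-generate $J$, so $A_\epsilon$ is Noetherian. Applied to $A\cdot A_\gamma$, whose minimum degree is $\gamma$, the same argument yields $A_\gamma$ finitely generated over $A_\epsilon$. Sufficiency is the principal obstacle: given $N\subseteq M$ with $M$ locally f.g., I would pull back along an epimorphism from a locally f.g. free module $F$ to reduce to the case $N\subseteq F$, then construct a locally finite generating set of $N$ one degree at a time, picking in each degree $\gamma$ finitely many representatives of $N_\gamma$ modulo contributions from strictly lower degrees. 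Noetherianness of $A_\epsilon$ keeps these quotients finitely generated, while the $A_\epsilon$-finite generation of each $A_\gamma$ relates the structure of $F_\gamma$ to lower-degree data; the technical heart of the argument is that combining these finite local choices globally produces a set that is still locally finite, which relies crucially on the ordering on $\Gamma$.
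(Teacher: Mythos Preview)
The paper states this theorem without proof, presumably as a direct extension of Eilenberg's results for $\mathbb{N}$-graded rings in~\cite{eilenbergMR}. So there is nothing in the paper to compare against; your outline follows Eilenberg's template and is the natural approach.

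There is, however, a real gap in your verification of axiom~(v). You write ``pick a minimal degree $\gamma_0\in\deg(X)$'', but the standing hypotheses---$\Gamma$ an ordered monoid with $\epsilon$ least---do not guarantee that $\deg(X)$ has any minimal element. In fact axiom~(v) can fail outright under those hypotheses: take $\Gamma=\mathbb{Q}_{\ge 0}$, $A=\K[\Gamma]$ the monoid algebra (so $A_\epsilon=\K$ and $\rad(A)=\bigoplus_{\gamma>0}A_\gamma$), and let $M=\bigoplus_{q\in\mathbb{Q}_{>0}}\K e_q$ with $A_\gamma\cdot e_q=e_{q+\gamma}$. Then $M$ is locally finitely generated by $\{e_q:q>0\}$ (one generator per degree), $M\ne 0$, yet for every $q>0$ one has
\[
(\rad(A)M)_q=\sum_{0<q'<q}A_{q-q'}M_{q'}=M_q,
\]
so $\rad(A)M=M$. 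Thus $\Cat(\Gamma,A)$ is \emph{not} perfect in this case, and the theorem as stated needs an additional hypothesis---for instance that $\Gamma$ satisfies the descending chain condition, or that each $\gamma\in\Gamma$ has only finitely many predecessors. The same issue recurs in your sufficiency argument for closure under subobjects: building generators of $N$ ``one degree at a time, modulo contributions from strictly lower degrees'' requires the recursion to terminate, which is again a well-foundedness assumption, and showing that $F_\gamma$ is a finitely generated $A_\epsilon$-module requires that only finitely many free generators lie in degrees below $\gamma$. In the paper's application $\Gamma=\Psi_n\cong\mathbb{N}^{n-1}$, where every element has only finitely many predecessors, so none of this causes trouble there; with such a hypothesis added, your sketch goes through.
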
 

\begin{proposition}
 \label{finitedimensional}
Let $A$ be a finite dimensional $\Gamma$-graded algebra.
Then the ungraded radical $\Rad(A)$ of $A$ coincides
with the graded radical $\rad(A)$ of $A$.
\end{proposition}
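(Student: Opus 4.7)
The plan is to prove the two inclusions $\Rad(A)\subseteq \rad(A)$ and $\rad(A)\subseteq \Rad(A)$ separately, using the structural description of $\rad(A)$ from Corollary~\ref{structureofradical} together with the finite dimensionality of $A$.

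For the easy inclusion $\Rad(A)\subseteq \rad(A)$, I would invoke Corollary~\ref{ungraded}: every maximal graded left ideal of $A$ is already a maximal left ideal in the ungraded sense. Hence the family of ideals whose intersection defines $\rad(A)$ is a subfamily of the one defining $\Rad(A)$, and taking intersection over fewer ideals yields a larger ideal, so $\Rad(A)\subseteq\rad(A)$.

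For the opposite inclusion my strategy is to show that $\rad(A)$ is a nilpotent two-sided ideal, since any nilpotent ideal is contained in the Jacobson radical $\Rad(A)$. By Corollary~\ref{structureofradical} we have $\rad(A)=\Rad(A_\epsilon)\oplus I$, where $I=\bigoplus_{\gamma\ne\epsilon}A_\gamma$, and I would treat the two summands separately. The ideal $\Rad(A_\epsilon)$ is nilpotent because $A_\epsilon$ is a finite dimensional algebra. The key (and only real) step is proving that $I$ is nilpotent. Here I would use that $A$ is finite dimensional, so the set $\Gamma_0=\{\gamma\in\Gamma\mid A_\gamma\ne 0\}$ is finite, and that $\Gamma$ is totally ordered with $\epsilon$ as least element, so multiplying by an element $>\epsilon$ strictly increases any degree. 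Thus for $\gamma_1,\dots,\gamma_N>\epsilon$ the partial products $\gamma_1,\,\gamma_1\gamma_2,\dots,\,\gamma_1\cdots\gamma_N$ form a strictly increasing sequence of pairwise distinct elements $>\epsilon$. Taking $N=|\Gamma_0|+1$ forces at least one partial product to lie outside $\Gamma_0$, and once the corresponding graded component vanishes the entire product in $A$ vanishes. Hence $I^{|\Gamma_0|+1}=0$.

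To pass from nilpotency of the two pieces to nilpotency of $\rad(A)$, note that $I$ is a two-sided ideal with $A/I\cong A_\epsilon$, so the image of $\rad(A)$ in $A/I$ is $\Rad(A_\epsilon)$, which is nilpotent, say of index $M$. Therefore $\rad(A)^M\subseteq I$, and combining with $I^N=0$ gives $\rad(A)^{MN}\subseteq I^N=0$. Since $\rad(A)$ is a (two-sided) nilpotent ideal, it lies in $\Rad(A)$, completing the proof.

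The only non-routine point is the nilpotency of $I$; this is where both finite dimensionality of $A$ and the ordered monoid hypothesis (in particular the strict monotonicity that makes partial products strictly increase) are essential. Everything else is an easy combination of Corollaries~\ref{ungraded} and~\ref{structureofradical} with standard facts about Jacobson radicals of finite dimensional algebras.
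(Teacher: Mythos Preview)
Your proposal is correct and follows essentially the same route as the paper: both prove $\Rad(A)\subseteq\rad(A)$ via Corollary~\ref{ungraded}, then show $\rad(A)$ is nilpotent by using Corollary~\ref{structureofradical} to decompose it as $\Rad(A_\epsilon)\oplus I$, proving $I$ nilpotent via the strictly increasing chain of partial degree products forced by finite dimensionality, and finally combining the two nilpotent pieces. The only cosmetic difference is in that last combination step: the paper expands $(M+N)^{nm+n}$ explicitly and counts how many $N$-factors must appear, whereas your quotient argument $\rad(A)^M\subseteq I\Rightarrow \rad(A)^{MN}=0$ is a cleaner way to reach the same conclusion.
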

\begin{proof}
 From Corollary~\ref{ungraded} it follows that $\Rad(A)$ is a subset
 of $\rad(A)$.

  We shall show, that $\rad(A)$ is nilpotent. Since
 $\Rad(A)$ is a maximal nilpotent ideal of $A$, we shall get that
 $\rad(A) = \Rad(A)$.

 By Corollary~\ref{structureofradical}, we have $\rad(A) =
 \Rad(A_e) \oplus \bigoplus_{\gamma\ne e} A_{\gamma}$.
 Denote by $N$ the ideal $\bigoplus_{\gamma\ne e}A_{\gamma}$.
 Then $N$ is nilpotent.
In fact, we show, that if $\dim(A) =n$, then the product of any $n+1$ elements of  $N$ is
 zero. Clearly, this should be checked only for homogeneous elements.
 Let $a_0, a_1, \dots a_n$ be  a sequence of homogeneous
 elements from $N$. Suppose, that for each $i$ the element $a_i$ is
 from $A_{\gamma_i}$. Then, since each $\gamma_i> \epsilon$ we have a
 strictly 
 increasing sequence 
 $$
 \gamma_0< \gamma_0\gamma_1< \dots< \gamma_0\gamma_1\dots \gamma_n.
 $$
 As    $A$ is $n$-dimensional
 one of the $n+1$ spaces 
 $$
 A_{\gamma_0}, A_{\gamma_0\gamma_1}, \dots ,
 A_{\gamma_0\gamma_1\dots\gamma_n}
 $$
 should be zero. In particular, one of the products
 $$
 a_{\gamma_0}, a_{\gamma_0}a_{\gamma_1}, \dots,
 a_{\gamma_0}a_{\gamma_1}\dots a_{\gamma_n}
 $$
 is zero. Thus $a_{\gamma_0}a_{\gamma_1}\dots a_{\gamma_n} = 0$
 and
 $N^{n+1} = 0$. 

 Denote $\Rad(A_{\epsilon})$ by $M$.
 For any natural numbers $k_0,\dots, k_{n+1}$, we have
 \begin{multline*}
  M^{k_0}NM^{k_1}\dots M^{k_{n}} N
  M^{k_{n+1}}=\\
  \left( M^{k_0}N \right)\left( M^{k_1}N \right)\dots
  \left( M^{k_{n-1}}N \right) \left( M^{k_{n}} N
    M^{k_{n+1}}\right)\subset N^{n+1} = 0.
 \end{multline*}

 As    $\Rad(A_{\epsilon})$ is a nilpotent ideal of the algebra
 $A_{\epsilon}$, there is
 $m$, such that $(\Rad(A_{\epsilon}))^m = M^m = 0$. 
  Then 
$$
(M + N)^{nm + n} = \sum_{l=0}^{n-1}\sum_{(k_0,\dots,k_l):\sum
k_i = nm + n
-l} M^{k_0}NM^{k_1}\dots M^{k_{l-1}} N
M^{k_l} = 0,
$$
since in each summand at least one $k_i$ is greater than $m$.
\end{proof}
\subsection{Skew product over monoids}
\label{skew}
We say that an algebra $B$ is a $\Gamma$-algebra, if there is a given
right action
\begin{align*}
 r\colon B\times \Gamma&\to B\\
 (b,\gamma)  & \mapsto b^{\gamma}
\end{align*}
 such that for each
$\gamma\in \Gamma$ the map 
\begin{align*}
  B &\to B\\
  b & \mapsto b^{\gamma}
\end{align*}
is an algebra homomorphism. 

Let $A$ be a $\Gamma$-graded algebra.
We define the interchange  map $T\colon B\otimes A\to A\otimes B$ by
\begin{align*}
 B\otimes A_{\gamma} & \to A_{\gamma} \otimes B\\
b\otimes a_{\gamma}&  \mapsto a_{\gamma}\otimes b^{\gamma}
\end{align*}
and a binary operation $m$ on $A\otimes B$ by
$$
m\colon A\otimes B \otimes A\otimes B \xrightarrow{1_A\otimes T\otimes
1_B} A\otimes A\otimes B\otimes B
\xrightarrow{m_A\otimes m_B} A\otimes B.
$$
Denote the vector space $A\otimes B$ with the binary operation $m$ on it by
$A\ltimes_{\Gamma} B$. 
\begin{proposition}
 $A\ltimes_{\Gamma}B$ is an algebra.
\end{proposition}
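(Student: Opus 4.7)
The plan is to verify directly the two ring axioms for the operation $m$ on $A\otimes B$: existence of a two-sided unit and associativity. Both axioms are $\K$-multilinear, so by bilinearity it suffices to check them on simple tensors $a\otimes b$ with $a\in A$ homogeneous, where the interchange map $T$ is unambiguously given by $T(b\otimes a_{\gamma})= a_{\gamma}\otimes b^{\gamma}$.

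For the unit, the candidate is $e_A\otimes e_B$, which makes sense because by hypothesis $e_A\in A_{\epsilon}$. Unfolding the definition of $m$ on $(e_A\otimes e_B)\cdot(a_{\gamma}\otimes b)$ and on $(a_{\gamma}\otimes b)\cdot(e_A\otimes e_B)$ reduces the claim to the identities $e_B^{\gamma}=e_B$ and $b^{\epsilon}=b$; the first is a consequence of $b\mapsto b^{\gamma}$ being an algebra homomorphism, and the second is the standard neutral-element axiom for a monoid action, implicit in the phrase ``right action'' in the definition of a $\Gamma$-algebra.

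For associativity I pick three simple tensors $a_{\gamma_i}\otimes b_i$ with $a_{\gamma_i}\in A_{\gamma_i}$ for $i=1,2,3$, and unfold both bracketings of their product using $m$. At each stage the middle factor on which $T$ acts is homogeneous (either a single $a_{\gamma_i}$, or the product $a_{\gamma_2}a_{\gamma_3}\in A_{\gamma_2\gamma_3}$), so the computation is unambiguous. The two $A$-components of the result equal $a_{\gamma_1}a_{\gamma_2}a_{\gamma_3}$ by associativity of $A$, while the two $B$-components are
$$
(b_1^{\gamma_2}b_2)^{\gamma_3}b_3 \quad\mbox{and}\quad b_1^{\gamma_2\gamma_3}b_2^{\gamma_3}b_3,
$$
respectively, and these are equal precisely by the two axioms of a $\Gamma$-algebra: the homomorphism property $(xy)^{\gamma}=x^{\gamma}y^{\gamma}$ distributes the outer $\gamma_3$ in the first expression, and the right-action compatibility $(b^{\gamma_2})^{\gamma_3}=b^{\gamma_2\gamma_3}$ then completes the match.

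There is no substantive obstacle; the proposition is a direct bookkeeping verification, and the only care needed is to keep track of homogeneity so that every application of $T$ is well-defined. Conceptually, the statement is a special case of the twisted tensor product construction of Brzezinski-Majid, where associativity follows from a hexagon-type coherence identity on the interchange map; in our setting that identity is nothing other than the conjunction of the homomorphism and right-action axioms of a $\Gamma$-algebra.
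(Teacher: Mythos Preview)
Your proof is correct and follows essentially the same route as the paper's own argument: both verify the unit and associativity axioms on homogeneous simple tensors, reducing the unit check to $e_B^{\gamma}=e_B$ and $b^{\epsilon}=b$, and the associativity check to the identity $(b_1^{\gamma_2}b_2)^{\gamma_3}b_3=b_1^{\gamma_2\gamma_3}b_2^{\gamma_3}b_3$. Your version is slightly more explicit about which $\Gamma$-algebra axioms are invoked at each step, and you add the contextual remark on twisted tensor products, but the substance is identical.
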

\begin{proof}
 We have to check that $e_A\otimes e_B$ is a neutral element with
 respect to $m$ and that $m$ is an associative operation. This follows
 from the following computation
 $$
 e_A\otimes e_B \otimes a_{\gamma} \otimes b \mapsto e_A\otimes
 a_{\gamma} \otimes e_B^{\gamma} \otimes b \mapsto a_{\gamma} \otimes
 b,
 $$
 $$
 a_{\gamma}\otimes b\otimes e_A\otimes e_B\mapsto a_{\gamma}\otimes
 e_A\otimes b^e \otimes e_B \mapsto a_{\gamma}\otimes b
 $$
 and 
 $$
 \xymatrix{
 {a_{\gamma_1}\otimes b_1\otimes a_{\gamma_2}\otimes b_2 \otimes
 a_{\gamma_3}\otimes b_3} \ar@{|->}[r]\ar@{|->}[dd] &
 { a_{\gamma_1}a_{\gamma_2}\otimes b_1^{\gamma_2}b_2 \otimes
 a_{\gamma_3}\otimes b_3}\ar@{|->}[d]\\
 &{a_{\gamma_1}a_{\gamma_2}a_{\gamma_3}\otimes
 (b_1^{\gamma_2}b_2)^{\gamma_3}b_3}\ar@{=}[d]\\
 {a_{\gamma_1}\otimes b_1\otimes a_{\gamma_2}a_{\gamma_3}\otimes
 b_2^{\gamma_3}b_3}
 \ar@{|->}[r] & {a_{\gamma_1}a_{\gamma_2}a_{\gamma_3}\otimes
 b_1^{\gamma_2\gamma_3}b_2^{\gamma_3}b_3  .}
 } 
 $$
\end{proof}
Note, that the embedding $A\to A\ltimes_{\Gamma} B$ given by
$$
a\mapsto a\otimes 1_B
$$
is a homomorphism of algebras. In the following we will consider the
elements of $A$ as elements of $A\ltimes_{\Gamma} B$ through this
embedding.

The algebra $A\ltimes_{\Gamma}B$ is itself $\Gamma$-graded.
In fact $$A\ltimes_{\Gamma} B = \bigoplus_{\gamma\in
\Gamma}\left(A_{\gamma}\otimes B\right).$$
Let $N$ be a $B$-module and $M= \bigoplus_{\gamma\in
\Gamma}M_{\gamma}$ a $\Gamma$-graded $A$-module. We
define a $(A\ltimes_{\Gamma}B)$-module structure on $M\otimes N$ as
follows
$$
a_{\gamma_1}\otimes b \otimes m_{\gamma_2}\otimes n \mapsto
a_{\gamma_1}m_{\gamma_2}\otimes b^{\gamma_2}n
$$
for all $a_{\gamma_1} \in A_{\gamma_1}$, $b\in B$, $m_{\gamma_2}\in
M_{\gamma_2}$ and $n\in N$. We denote this module by
$M\ltimes_{\Gamma}N$.
This is a $\Gamma$-graded module, with $(M\ltimes_{\Gamma}
N)_{\gamma} = M_{\gamma}\otimes N$.

 Let $\varphi\colon M_1\to M_2$ be a homomorphism of $\Gamma$-graded $A$-modules
 and $\psi\colon N_1\to N_2$ a homomorphism of $B$-modules. We denote by
 $\varphi\ltimes_{\Gamma}\psi$ the map
 \begin{align*}
  M_1\ltimes_{\Gamma}N_1& \to M_2\ltimes_{\Gamma}N_2\\
  m\otimes n  & \mapsto \varphi(m)\otimes \psi(n).
 \end{align*}
 \begin{proposition}
  \label{homomorphism}
  The map $\varphi\ltimes_{\Gamma} \psi$ is a homomorphism of
  $\Gamma$-graded $A\ltimes_{\Gamma} B$-modules. 
\end{proposition}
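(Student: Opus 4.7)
The plan is to verify the two conditions that define a homomorphism of $\Gamma$-graded $A\ltimes_{\Gamma}B$-modules: compatibility with the grading, and compatibility with the module action. Both will follow directly from the hypotheses that $\varphi$ is a $\Gamma$-graded $A$-module map and $\psi$ is a $B$-module map, combined with the explicit formulas for the grading and action on $M\ltimes_{\Gamma}N$ given just before the statement.

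First I would check the grading. Since $(M_i\ltimes_{\Gamma}N_i)_{\gamma}=M_{i,\gamma}\otimes N_i$ and $\varphi(M_{1,\gamma})\subset M_{2,\gamma}$, the map $\varphi\ltimes_{\Gamma}\psi$ sends $M_{1,\gamma}\otimes N_1$ into $M_{2,\gamma}\otimes N_2$, i.e.\ it respects the $\Gamma$-grading. Next I would verify $A\ltimes_{\Gamma}B$-linearity on elementary tensors; by bilinearity it suffices to take $a_{\gamma_1}\otimes b\in A_{\gamma_1}\otimes B$ and $m_{\gamma_2}\otimes n\in M_{1,\gamma_2}\otimes N_1$ and compute both sides. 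Applying the action first and then $\varphi\ltimes_{\Gamma}\psi$ gives
$$
(\varphi\ltimes_{\Gamma}\psi)\bigl((a_{\gamma_1}\otimes b)(m_{\gamma_2}\otimes n)\bigr)
=(\varphi\ltimes_{\Gamma}\psi)(a_{\gamma_1}m_{\gamma_2}\otimes b^{\gamma_2}n)
=\varphi(a_{\gamma_1}m_{\gamma_2})\otimes \psi(b^{\gamma_2}n),
$$
while applying $\varphi\ltimes_{\Gamma}\psi$ first and then the action gives
$$
(a_{\gamma_1}\otimes b)\bigl((\varphi\ltimes_{\Gamma}\psi)(m_{\gamma_2}\otimes n)\bigr)
=(a_{\gamma_1}\otimes b)(\varphi(m_{\gamma_2})\otimes \psi(n))
=a_{\gamma_1}\varphi(m_{\gamma_2})\otimes b^{\gamma_2}\psi(n).
$$
The $A$-linearity of $\varphi$ gives $\varphi(a_{\gamma_1}m_{\gamma_2})=a_{\gamma_1}\varphi(m_{\gamma_2})$, and the $B$-linearity of $\psi$ gives $\psi(b^{\gamma_2}n)=b^{\gamma_2}\psi(n)$, so the two expressions coincide.

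There is no real obstacle here; the statement is essentially a compatibility check for the definitions introduced in the preceding paragraphs. The only point one has to be a little careful about is that in the action formula the ``twist'' $b\mapsto b^{\gamma_2}$ depends on the degree $\gamma_2$ of $m_{\gamma_2}$, which is why one restricts to homogeneous $m_{\gamma_2}$ in the verification; since $\varphi$ preserves degrees, the same $\gamma_2$ appears on both sides, and this is what makes the two computations match.
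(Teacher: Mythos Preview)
Your proof is correct and follows essentially the same approach as the paper: first verify that $\varphi\ltimes_{\Gamma}\psi$ preserves the grading using $\varphi(M_{1,\gamma})\subset M_{2,\gamma}$, then check $A\ltimes_{\Gamma}B$-linearity on homogeneous elementary tensors by the same two-sided computation, using $A$-linearity of $\varphi$ and $B$-linearity of $\psi$. Your added remark that $\varphi$ preserving degree is what ensures the same twist $b^{\gamma_2}$ appears on both sides is a useful clarification not made explicit in the paper.
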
 
\begin{proof}
 Since for all $m_{\gamma} \in (M_1)_{\gamma}$, the element
 $\varphi(m_{\gamma})$ is an element of $(M_2)_{\gamma}$, it follows
 that $\varphi\ltimes_{\Gamma} \psi(m_{\gamma}\otimes n)=
 \varphi(m_{\gamma})\otimes \psi(n)$ is an
 element of $(M_2)_{\gamma}\otimes N$. Thus
 $\varphi\ltimes_{\Gamma}\psi$ preserves the $\Gamma$-grading.

 That $\varphi\ltimes_{\Gamma} \psi$ is a
 $A\ltimes_{\Gamma}B$-homomorphism follows from the following
 computation 
$$
\xymatrix{ {a_{\gamma_1}\otimes b \otimes m_{\gamma_2}\otimes n}
\ar@{|->}[r]\ar@{|->}[dd] & {a_{\gamma_1}m_{\gamma_2}\otimes b^{\gamma_2}n}\ar[d]\\
& \phi(a_{\gamma_1}m_{\gamma_2})\otimes \psi(b^{\gamma_2}n)\ar@{=}[d]\\
{a_{\gamma_1}\otimes b \otimes \varphi(m_{\gamma_2})\otimes \psi(n)} \ar@{|->}[r]&
{a_{\gamma_1}\varphi(m_{\gamma_2})\otimes b^{\gamma_2}\psi(n),}
 }
 $$
 where $a_{\gamma_1} \in A_{\gamma_1}$ and $m_{\gamma_2}\in
 M_{\gamma_2}$. 
\end{proof}
It follows from these results that the correspondence
\begin{align*}
 (M,N)& \mapsto M\ltimes_{\Gamma}N\\
 (\varphi,\psi)& \mapsto \varphi\ltimes_{\Gamma} \psi
\end{align*}
gives a bifunctor from the categories $A$-$\Gamma$-gr and $B$-mod to the
category $(A\ltimes_{\Gamma}B)$-$\Gamma$-gr.
In particular, for each $B$-module $N$ we have the functor $-\ltimes_{\Gamma}N$
from the category $A$-$\Gamma$-gr to the category
$(A\ltimes_{\Gamma}B)$-$\Gamma$-gr. This functor is obviously exact,
since it is just a tensor product with $N$ on the level of underlying
vector spaces.

\begin{proposition}
 \label{free}
Suppose, that $\Gamma$ acts by automorphisms on $B$. Let $F_A$ be a
$\Gamma$-graded free $A$-module, and $F_B$ a
free $B$-module. Then the module $F_A\ltimes_{\Gamma} F_B$ is a
$\Gamma$-graded
free $A\ltimes_{\Gamma} B$-module.
\end{proposition}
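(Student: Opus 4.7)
The plan is to reduce the statement to a rank-one calculation on each factor. Since $-\ltimes_\Gamma-$ is just a tensor product of underlying vector spaces (with a module structure twisted by the grading), it commutes with arbitrary direct sums in each argument. Writing $F_A\cong\bigoplus_{i\in I}A(\gamma_i)$ and $F_B\cong\bigoplus_{j\in J}B$, this yields a canonical isomorphism
$$
F_A\ltimes_\Gamma F_B\cong\bigoplus_{(i,j)\in I\times J}\bigl(A(\gamma_i)\ltimes_\Gamma B\bigr).
$$
The proposition therefore reduces to showing that, for every $\gamma\in\Gamma$, the module $A(\gamma)\ltimes_\Gamma B$ is isomorphic to the shifted free module $(A\ltimes_\Gamma B)(\gamma)$.

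Both modules have the same underlying graded vector space: in degree $\gamma'$ each equals $A_{\gamma'\gamma}\otimes B$. Their $A\ltimes_\Gamma B$-actions, however, do not coincide on the nose. Acting with $a_{\gamma_1}\otimes b_1$ on $a'\otimes b$ (for $a'\in A_{\gamma'\gamma}$) produces $a_{\gamma_1}a'\otimes b_1^{\gamma'}b$ in $A(\gamma)\ltimes_\Gamma B$, since $a'$ has $A(\gamma)$-degree $\gamma'$, but produces $a_{\gamma_1}a'\otimes b_1^{\gamma'\gamma}b$ in $(A\ltimes_\Gamma B)(\gamma)$, since the product there is computed in $A\ltimes_\Gamma B$ with $a'$ regarded as being in $A$-degree $\gamma'\gamma$.

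To reconcile this, I would define a graded map
$$
\phi\colon A(\gamma)\ltimes_\Gamma B\longrightarrow(A\ltimes_\Gamma B)(\gamma),\qquad a\otimes b\longmapsto a\otimes b^\gamma,
$$
and verify that it is an isomorphism of $A\ltimes_\Gamma B$-modules. The $A\ltimes_\Gamma B$-linearity follows from a short direct calculation that uses two ingredients: that $\delta\mapsto(\cdot)^\delta$ is a right action of $\Gamma$, so $(b_1^{\gamma'})^\gamma=b_1^{\gamma'\gamma}$, and that $b\mapsto b^\gamma$ is a ring homomorphism, so $(b_1^{\gamma'}b)^\gamma=b_1^{\gamma'\gamma}b^\gamma$. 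The only genuine subtlety is invertibility of $\phi$: it is bijective precisely because the hypothesis that $\Gamma$ acts by \emph{automorphisms} ensures that each map $b\mapsto b^\gamma$ is a bijection of $B$. This is the single point where the automorphism hypothesis (as opposed to mere endomorphism) is used, and it completes the reduction to the shifted free case.
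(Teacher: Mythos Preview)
Your argument is correct and complete. The verification that $\phi$ is $A\ltimes_\Gamma B$-linear is exactly the computation you indicate, and the automorphism hypothesis is used precisely where you say, to invert $b\mapsto b^\gamma$.

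The paper takes a different, more explicit route: it fixes a homogeneous $A$-basis $\{v_\alpha\}$ of $F_A$ (with $v_\alpha$ of degree $\gamma_\alpha$) and a $B$-basis $\{w_\beta\}$ of $F_B$, and proves directly that $\{v_\alpha\otimes w_\beta\}$ is an $A\ltimes_\Gamma B$-basis of $F_A\ltimes_\Gamma F_B$. Generation is shown by writing $u\otimes v$ as $\sum_{\alpha,\beta}(x_\alpha\otimes \gamma_\alpha^{-1}(y_\beta))(v_\alpha\otimes w_\beta)$, where $\gamma_\alpha^{-1}$ denotes the inverse of the automorphism $(\cdot)^{\gamma_\alpha}$; this is where the paper invokes the automorphism hypothesis. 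Linear independence is then checked by passing to a $\K$-basis. Your approach packages the same content functorially: the direct-sum compatibility replaces the explicit basis bookkeeping, and the single isomorphism $A(\gamma)\ltimes_\Gamma B\cong (A\ltimes_\Gamma B)(\gamma)$ encodes both the generation and independence steps at once. The paper's argument has the mild advantage of exhibiting an explicit free basis, which can be convenient downstream; yours is cleaner and makes the role of the automorphism hypothesis more transparent.
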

\begin{proof}
  Let $\left\{ v_{\alpha}\mid \alpha\in I \right\}$ be a
 $\Gamma$-homogeneous $A$-basis of
 $F_A$ and $\left\{ w_{\beta}\mid \beta\in J \right\}$ a $B$-basis of
 $F_B$. We shall show, that $\left\{ v_{\alpha}\otimes w_{\beta}\mid \alpha\in I,\
 \beta\in J \right\}$ is a $\Gamma$-homogeneous $A\ltimes_{\Gamma}B$-basis of
 $F_A\ltimes_{\Gamma} F_B$.

 First we show that each element in $F_A\ltimes_{\Gamma} F_B$ can be
 written as a $A\ltimes_{\Gamma} B$-combination of elements from
 $\left\{ v_{\alpha}\otimes w_{\beta}\mid \alpha\in I,\
 \beta\in J \right\}$. Clearly, it should be
 checked only for elements of the form $u\otimes v$ with $u\in A$ and
 $v\in B$. Since $\left\{ v_{\alpha}\mid \alpha\in I \right\}$ is a
 basis of $A$ we can write $u$ as a linear combination
 $$
 u= \sum_{\alpha\in I} x_{\alpha}v_{\alpha},
 $$
 with $x_{\alpha}\in A$. Since $\left\{ w_{\beta}\mid \beta\in J \right\}$ 
 is a basis of $B$ we can write $v$ as a linear combination
 $$
 v = \sum_{\beta\in J} y_{\beta}w_{\beta},
 $$
 with $y_{\beta}\in B$. Denote by $\gamma_{\alpha}$ degree of
 $v_{\alpha}$. Recall, that $\Gamma$ acts by automorphisms on
 $B$ and therefore the map $\gamma^{-1}\colon B\to B$ is well defined
 for all $\gamma\in \Gamma$. Then
 $$
 u\otimes v = \sum_{\alpha\in I} \sum_{\beta\in B}
 x_{\alpha}v_{\alpha}\otimes y_{\beta}w_{\beta}= \sum_{\alpha\in I}\sum_{\beta\in J}
 (x_{\alpha}\otimes \gamma_{\alpha}^{-1}(y_{\beta})
 )(v_{\alpha}\otimes w_{\beta}).
 $$

 Now, we show that the set $\left\{ v_{\alpha}\otimes w_{\beta}\mid \alpha\in I,\
 \beta\in J \right\}$ is linearly independent over $A\ltimes_{\Gamma}
 B$.
 Let $\left\{ a_{\theta} \right\}$ be a homogeneous $\K$-basis of
 $A$, and $\left\{ b_{\mu} \right\}$ a $\K$-basis of $B$. 
 Since $\Gamma$ acts by automorphisms on $B$, for each $\gamma\in
 \Gamma$ there are elements $b_{\mu,\gamma}$ such that
 $\gamma(b_{\mu,\gamma})=b_{\mu}$ and the set $\left\{ b_{\mu,\gamma}
 \right\}$ is a $\K$-basis of $B$. Thererfore, for each $\gamma\in
 \Gamma$ the set $\left\{ a_{\theta}\otimes b_{\mu,\gamma} \right\}$ is a
 $\K$-basis of $A\ltimes_{\Gamma}B$.
 Suppose, that
 $$
 \sum_{\alpha\in I}\sum_{\beta\in J}
 \kappa_{\alpha,\beta}v_{\alpha}\otimes w_{\beta}=0,
 $$
 where $\kappa_{\alpha,\beta}\in A\ltimes_{\Gamma}B$. Then there are
 elements $\eta_{\alpha,\beta,\theta,\mu}$ of $\K$ such that
 $$
 \kappa_{\alpha,\beta} = \sum_{\theta}\sum_{\mu}
 \eta_{\alpha,\beta,\theta,\mu} a_{\theta}\otimes
 b_{\mu,\gamma_{\alpha}}.
 $$
Thus
\begin{multline*}
0= \sum_{\alpha\in I}\sum_{\beta\in J}
 \kappa_{\alpha,\beta}v_{\alpha}\otimes w_{\beta}=\sum_{\alpha\in I,
 \beta\in J}\sum_{\theta,\mu}
 \eta_{\alpha,\beta,\theta,\mu}(a_{\theta}\otimes
 b_{\mu,\gamma_{\alpha}})(v_{\alpha}\otimes w_{\beta})\\ = \sum_{\alpha\in I,
 \beta\in J}\sum_{\theta,\mu}
 \eta_{\alpha,\beta,\theta,\mu}a_{\theta}v_{\alpha}\otimes
 \gamma_{\alpha}( b_{\mu,\gamma_{\alpha}}) w_{\beta} = 
\sum_{\alpha\in I,
 \beta\in J}\sum_{\theta,\mu}
 \eta_{\alpha,\beta,\theta,\mu}a_{\theta}v_{\alpha}\otimes
 b_{\mu} w_{\beta}.
\end{multline*}
Since $\left\{ a_{\theta}v_{\alpha} \right\}$ is a $\K$-basis of
$F_{A}$ and $\left\{ b_{\mu}w_{\beta} \right\}$ is a $\K$-basis of
$F_{B}$, it follows, that $\left\{ a_{\theta}v_{\alpha} \otimes
b_{\mu}w_{\beta} \right\}$ is a $\K$-basis of $F_A\ltimes_{\Gamma}
F_B$. Therefore, all $\eta_{\alpha,\beta,\theta,\mu}$ are zero and consequently all
$\kappa_{\alpha,\beta}$ are zero.

\end{proof}

\begin{proposition}
 \label{projective}
Suppose, that $\Gamma$ acts by automorphisms on $B$. Let $P$ be a $\Gamma$-graded projective $A$-module, and $N$ a
projective $B$-module. Then the module $M\ltimes_{\Gamma} N$ is a
$\Gamma$-graded
projective $A\ltimes_{\Gamma} B$-module.
\end{proposition}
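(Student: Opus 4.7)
The plan is to reduce the projective case to the free case already handled in Proposition~\ref{free}, exploiting the additivity of the bifunctor $-\ltimes_{\Gamma}-$.

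First I would use the characterisation of projectives as direct summands of frees in each category separately. Since $P$ is a projective $\Gamma$-graded $A$-module, by definition there exist a $\Gamma$-graded free $A$-module $F_A$ and a $\Gamma$-graded $A$-module $P'$ with $F_A \cong P\oplus P'$ in $A$-$\Gamma$-gr. Likewise, since $N$ is a projective $B$-module, there exist a free $B$-module $F_B$ and a $B$-module $N'$ with $F_B \cong N\oplus N'$ in $B$-mod.

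Next I would invoke the observation made just before Proposition~\ref{free}: on underlying vector spaces the operation $\ltimes_{\Gamma}$ coincides with the ordinary tensor product, so it is additive in each argument. Combined with Proposition~\ref{homomorphism}, which guarantees that the canonical inclusions and projections associated with the two direct sum decompositions induce $\Gamma$-graded $A\ltimes_{\Gamma}B$-module maps, this yields an isomorphism
$$
F_A\ltimes_{\Gamma} F_B \;\cong\; (P\ltimes_{\Gamma} N)\oplus (P\ltimes_{\Gamma} N')\oplus (P'\ltimes_{\Gamma} N)\oplus (P'\ltimes_{\Gamma} N')
$$
in $(A\ltimes_{\Gamma}B)$-$\Gamma$-gr.

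Finally, since $\Gamma$ acts by automorphisms on $B$ by hypothesis, Proposition~\ref{free} applies and tells us that $F_A\ltimes_{\Gamma} F_B$ is a $\Gamma$-graded free $A\ltimes_{\Gamma}B$-module. The module $P\ltimes_{\Gamma} N$ is thus identified with a direct summand of a free $\Gamma$-graded $A\ltimes_{\Gamma}B$-module, which is precisely the definition of a projective $\Gamma$-graded $A\ltimes_{\Gamma}B$-module. The only point that needs a little care is that all the decompositions take place in the graded category (so that the summand is graded-projective, not merely ungraded-projective), but this is immediate from Proposition~\ref{homomorphism} since both the chosen decompositions of $F_A$ and of $F_B$ are already morphisms of the appropriate type. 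I do not expect any real obstacle beyond keeping track of this bookkeeping.
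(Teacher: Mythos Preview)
Your argument is correct and is essentially the same as the paper's: both realise $P$ and $N$ as graded direct summands of free modules $F_A$ and $F_B$, use Proposition~\ref{homomorphism} to ensure the splitting maps give $\Gamma$-graded $A\ltimes_{\Gamma}B$-homomorphisms, and then invoke Proposition~\ref{free} to conclude that $P\ltimes_{\Gamma}N$ is a summand of the free module $F_A\ltimes_{\Gamma}F_B$. The only cosmetic difference is that the paper works directly with the idempotent pair $(i_P\ltimes_{\Gamma}i_N,\ \pi_P\ltimes_{\Gamma}\pi_N)$ rather than writing out the full four-term decomposition of $F_A\ltimes_{\Gamma}F_B$.
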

\begin{proof}
 Since $P$ is a $\Gamma$-graded projective $A$-module, it is a direct
 summand of a free module $F_A$ over $A$. We denote the corresponding
 inclusion and projection $\Gamma$-graded $A$-module homomorphisms by
 $i_P$ and $\pi_P$ respectively. Analogously, since $N$ is a
 projective $B$-module, it is a direct summand of some free $B$-module
 $F_B$. We denote the respective inclusion and projective
 homomorphisms by $i_N$ and $\pi_N$. Then we have maps 
 \begin{align*}
  i\colon M\ltimes_{\Gamma} N & \to F_A\ltimes_{\Gamma} F_B\\
m\otimes n & \mapsto i_M(m)\otimes i_N(m)
 \end{align*}
 and 
 \begin{align*}
  \pi\colon F_A\ltimes_{\Gamma} F_B & \to M\ltimes_{\Gamma} N\\
  f\otimes g & \mapsto \pi_M(f)\otimes \pi_N(g) .
 \end{align*}
 The maps $i$ and $\pi$ are $\Gamma$-graded
 $A\ltimes_{\Gamma}B$-module homomorphism by
 Proposition~\ref{homomorphism}. It is obvious that $\pi\circ i =
 1_{M\ltimes_{\Gamma}N}$. Thus, $M\ltimes_{\Gamma} N$ is a direct
 summand of $F_A\ltimes_{\Gamma} F_B$. But by Theorem~\ref{free}  the
 module
 $F_A\ltimes_{\Gamma} F_B$ is a free $\Gamma$-graded $A\ltimes_{\Gamma} B$-module. 

\end{proof}
Let $N$ be a projective $B$-module. Then Proposition~\ref{projective} shows,
that the functor $-\ltimes_{\Gamma}N$ preserves projective resolutions.
Note, that it does not map in general a minimal projective resolution
into a minimal projective resolution. 

\begin{proposition}
 \label{radical}
 Suppose that $\Gamma$ acts by
 automorphisms on $B$. If $A_{\epsilon}\cong \K$ and $\Rad(B) = 0$,
 then 
$$
\rad(A\ltimes_{\Gamma} B) = \rad(A) \ltimes_{\Gamma} B.
$$
\end{proposition}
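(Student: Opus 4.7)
The plan is to derive both sides from Corollary~\ref{structureofradical}, applied separately to the $\Gamma$-graded algebras $A$ and $A\ltimes_{\Gamma} B$. The key observation is that the degree-$\epsilon$ component of $A\ltimes_{\Gamma} B$ is naturally isomorphic to $B$ as an algebra, so its ungraded radical is read off directly from the hypothesis $\Rad(B)=0$.

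First I would record that $(A\ltimes_{\Gamma} B)_\epsilon = A_\epsilon\otimes B$ and that, since $\epsilon$ is the neutral element of $\Gamma$, it acts as the identity on $B$. Hence the interchange map $T$ restricts to the plain flip on $A_\epsilon\otimes B$, and the multiplication inherited from $A\ltimes_{\Gamma} B$ coincides with the ordinary tensor product of algebras on this piece. Combining this with $A_\epsilon\cong\K$ yields $(A\ltimes_{\Gamma} B)_\epsilon\cong B$ as algebras, so $\Rad((A\ltimes_{\Gamma} B)_\epsilon)=\Rad(B)=0$.

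Next I would apply Corollary~\ref{structureofradical} twice. For the $\Gamma$-graded algebra $A\ltimes_{\Gamma} B$ it gives
$$
\rad(A\ltimes_{\Gamma} B) = \Rad((A\ltimes_{\Gamma} B)_\epsilon)\oplus \bigoplus_{\gamma\ne\epsilon}(A_\gamma\otimes B) = \bigoplus_{\gamma\ne\epsilon} A_\gamma\otimes B.
$$
Applied to $A$, together with $\Rad(A_\epsilon)=\Rad(\K)=0$, it gives $\rad(A)=\bigoplus_{\gamma\ne\epsilon}A_\gamma$. Unwinding the notation, $\rad(A)\ltimes_{\Gamma} B$ is the subspace $\rad(A)\otimes B=\bigoplus_{\gamma\ne\epsilon}A_\gamma\otimes B$ of $A\ltimes_{\Gamma} B$, which matches the preceding display.

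The only step requiring genuine verification is the identification of the multiplication on $A_\epsilon\otimes B$ with the ordinary tensor product; this reduces to checking $b^\epsilon=b$, which is forced by the fact that $\Gamma$ acts on $B$ with $\epsilon$ as identity. Positivity of $\Gamma$, needed to invoke Corollary~\ref{structureofradical}, is in force throughout this subsection (inherited from the restriction to ordered monoids with least element $\epsilon$), so no additional hypothesis is required, and in particular the assumption that $\Gamma$ acts by \emph{automorphisms} on $B$ plays no direct role in this argument.
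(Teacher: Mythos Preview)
Your proof is correct and follows essentially the same approach as the paper: identify $(A\ltimes_{\Gamma}B)_{\epsilon}$ with the ordinary tensor product $A_{\epsilon}\otimes B\cong B$, apply Corollary~\ref{structureofradical} to both $A\ltimes_{\Gamma}B$ and $A$, and match the resulting descriptions. Your remark that the automorphism hypothesis is not actually invoked is accurate---the paper's proof does not use it either.
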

\begin{proof}
 Note, that $A_{\epsilon}\ltimes_{\Gamma} B$ is a subalgebra of
 $A\ltimes_{\Gamma}B$ and it is isomorphic to the usual tensor product
 of algebras $A_{\epsilon} \otimes B$. 
By Corollary~\ref{structureofradical}, we have
\begin{align*}
 \rad(A\ltimes_{\Gamma} B)& = \Rad(
 (A\ltimes_{\Gamma}B)_{\epsilon})
 \oplus \bigoplus_{\gamma\ne \epsilon} (A\ltimes_{\Gamma}B)_{\gamma}\\&=
 \Rad(A_{\epsilon} \otimes B) \oplus
 \left(\bigoplus_{\gamma\ne\epsilon}
 A_{\gamma}\right) \ltimes_{\Gamma} B\\
 & = \Rad(\K\otimes B)\oplus \rad(A)\ltimes_{\Gamma}B =
 \rad(A)\ltimes_{\Gamma}B.
\end{align*}
\end{proof}

\begin{corollary}
 \label{minimalmap}
 Suppose $A_{\epsilon} \cong \K$ and $\Rad(B)=0$. Let $\phi\colon
 M_1\to M_2$ be a homomorphism of $\Gamma$-graded $A$-modules.
 Suppose, that $\im(\phi)\subset \rad(A)M_2$. Then for any $B$-module
 $N$, we have
 $$
 \im(\phi\ltimes_{\Gamma} N)\subset  \rad(A\ltimes_{\Gamma}
 B)(M_2\ltimes_{\Gamma} N).
 $$
 \begin{proof}
  We have
  $$
  \im(\phi\ltimes_{\Gamma}N) = \im(\phi)\ltimes_{\Gamma} N \subset
  \rad(A)M_2 \ltimes_{\Gamma} N.
  $$
  From Proposition~\ref{radical} we have
  $$
  \rad(A\ltimes_{\Gamma} B) (M_2\ltimes_{\Gamma}N ) = (\rad(A)\ltimes
  B)(M_2\ltimes_{\Gamma}N).
  $$
  As $\Gamma$ acts by automorphism on $B$, for every $\gamma\in\Gamma$
  we have $B^{\gamma}= B$. Therefore
  $$
  (\rad(A)\ltimes
  B)(M_2\ltimes_{\Gamma}N) = \rad(A)M_2\ltimes_{\Gamma} BN =
  \rad(A)M_2\ltimes_{\Gamma} N.
  $$
 \end{proof}
\end{corollary}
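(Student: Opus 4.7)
The plan is to chain together three identifications: first that the image of $\phi\ltimes_{\Gamma}N$ lives inside $\rad(A)M_2\ltimes_{\Gamma}N$, second that $\rad(A\ltimes_{\Gamma}B)$ equals $\rad(A)\ltimes_{\Gamma}B$ by the preceding proposition, and third that applying $\rad(A)\ltimes_{\Gamma}B$ to $M_2\ltimes_{\Gamma}N$ already produces all of $\rad(A)M_2\ltimes_{\Gamma}N$. The conclusion then follows by comparing the two sides.

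First I would observe that on underlying vector spaces $\phi\ltimes_{\Gamma}N$ is simply $\phi\otimes\mathrm{id}_N$, so its image is $\im(\phi)\otimes N$. The hypothesis $\im(\phi)\subset\rad(A)M_2$ then immediately gives
$$
\im(\phi\ltimes_{\Gamma}N)\subset \rad(A)M_2\ltimes_{\Gamma}N.
$$
Next, since $A_{\epsilon}\cong\K$ and $\Rad(B)=0$, Proposition~\ref{radical} identifies the radical of the skew product as $\rad(A\ltimes_{\Gamma}B)=\rad(A)\ltimes_{\Gamma}B$, so it remains only to show
$$
\rad(A)M_2\ltimes_{\Gamma}N \subset (\rad(A)\ltimes_{\Gamma}B)(M_2\ltimes_{\Gamma}N).
$$

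For this last step I would work with a homogeneous generator $(am)\otimes n$ of the left-hand side, where $a\in\rad(A)$ and $m\in M_2$ is homogeneous of some degree $\gamma$. Because $\Gamma$ acts by automorphisms on $B$, the map $b\mapsto b^{\gamma}$ is bijective on $B$, so $1_B=1_B^{\gamma}$ and more generally $B^{\gamma}=B$; consequently $B^{\gamma}N=BN=N$ using that $N$ is a unital $B$-module. Unwinding the skew-product action, $(a\otimes 1_B)(m\otimes n)=am\otimes 1_B^{\gamma}n=am\otimes n$, which exhibits $(am)\otimes n$ as an element of $(\rad(A)\ltimes_{\Gamma}B)(M_2\ltimes_{\Gamma}N)$. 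Combining the three inclusions gives the claim.

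The only point requiring any care is the last one, where one must unpack the twisted action correctly and use the automorphism hypothesis on $\Gamma$ in the right place; everything else is bookkeeping on top of Proposition~\ref{radical}. No new ingredients beyond that proposition and the definition of $\ltimes_{\Gamma}$ appear to be needed.
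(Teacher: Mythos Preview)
Your proof is correct and follows essentially the same route as the paper: compute the image as $\im(\phi)\ltimes_{\Gamma}N$, invoke Proposition~\ref{radical} to identify $\rad(A\ltimes_{\Gamma}B)=\rad(A)\ltimes_{\Gamma}B$, and then use that $\Gamma$ acts by automorphisms to obtain $(\rad(A)\ltimes_{\Gamma}B)(M_2\ltimes_{\Gamma}N)=\rad(A)M_2\ltimes_{\Gamma}N$. Your element-level computation $(a\otimes 1_B)(m\otimes n)=am\otimes n$ is just a spelled-out version of the paper's one-line equality, and you correctly only prove the inclusion actually needed.
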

\subsection{Strong idempotent ideals}
\label{strong-idempotent}
Let $A$ be an algebra and $I$ a two-sided ideal of $A$. In this section we give
an overview of results from~\cite{idempotent-ideals} concerning the
inclusion functor $A/I-mod \to A-mod$. 

We always have a map $\phi_{X,Y}^i\colon \Ext_{A/I}^I(X,Y) \to
\Ext_A^I(X,Y)$ for $i\ge 0$ and $X$, $Y$ in $A/I-mod$, induced by the
canonical isomorphism $\phi_{x,y}^0\colon \Hom_{A/I}(X,Y)\to
\Hom_A(X,Y)$. 
Analogously, there are maps  $\psi_{X,Y}^I\colon \Tor_i^A(Y,X)\to
\Tor_i^{A/I}(Y,X) $ for $i\ge 0$ and $X$ left and $Y$ right
$A/I$-module, induced by the canonical isomorphism
$Y\otimes_{A}X \cong Y\otimes_{A/I} X$. 
In~\cite[Proposition~1.2 and Proposition~1.3]{idempotent-ideals},  there was proved the equivalence of the
following properties for $I$ a two-sided ideal of an algebra
$A$ and $k$ a natural number:
 \begin{enumerate}
  \item $\phi_{X,Y}^i\colon \Ext_{A/I}^i(X,Y)\to \Ext_{A}^i(X,Y)$ is
  an isomorphism for all $X$, $Y$ in $A/I-mod$ and all $1\le i\le k$.
  \item $\Ext_{A}^i(A/I, Y)=0$ for all $A/I$-module $Y$ and all
  $1\le i\le k$.
  \item $\psi_{X,Y}^i\colon \Tor_i^{A/I}(X,Y) \to \Tor_i^A(X,Y)$ is an
  isomorphism for all $X$ in $(A/I)^{op}-mod$ and $Y$ in $A/I-mod$ and
  all $0\le i\le k$.
  \item $\Tor_i^A(A/I, Y) = 0$ for all $Y$ in $A/I$-mod and all
  $1\le i\le k$.
  \item if $Y$ is an $A/I$-module and $\cdots\to P_1 \to P_0\to Y\to 0$
  is a minimal projective resolution of $Y$ in $A-mod$, then 
  $$
  P_k/IP_k \to \cdots \to P_0/IP_0 \to Y\to 0 
  $$
 is the beginning of a minimal projective resolution of $Y$ in
 $A/I$-mod. 
 \end{enumerate} 
\begin{definition}
If one of the above conditions holds, we say that $I$ is a
\emph{$k$-idempotent ideal}. If the conditions hold for all $k\in \N$,
then we say that $I$ is \emph{strong idempotent ideal}. 
\end{definition}
We have the following obvious property of $k$-idempotent ideals
\begin{proposition}
 \label{chaine}
 If $$I_0\subset I_1\subset \dots \subset I_l\subset A$$ is a chain
 of ideals in $A$, such that for all $j$ the ideal $I_j/I_{j-1}$ of
 $A/I_{j-1}$ is $k$-idempotent, then $I_l$ is $k$-idempotent
 ideal of $A$
\end{proposition}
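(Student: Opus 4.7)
I would prove Proposition~\ref{chaine} by induction on $l$. The base case $l=0$ is vacuous (or amounts to the hypothesis on $I_0$, interpreted with $I_{-1}=0$), so the entire content is an inductive step of the following shape: given two ideals $I\subset J\subset A$ such that $I$ is a $k$-idempotent ideal of $A$ and $J/I$ is a $k$-idempotent ideal of $A/I$, show that $J$ is a $k$-idempotent ideal of $A$.

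To verify this, I would use characterization (ii) and show $\Ext_A^i(A/J,Y)=0$ for every left $A/J$-module $Y$ and every $1\le i\le k$. Since $I\subset J$, every $A/J$-module is in particular an $A/I$-module, so $A/J$ and $Y$ both live in $A/I$-mod. The hypothesis on $J/I$ in $A/I$, applied via characterization (ii) to the ideal $J/I$ of $A/I$, gives
\[
\Ext_{A/I}^i\bigl((A/I)/(J/I),\,Y\bigr)=\Ext_{A/I}^i(A/J,Y)=0,\qquad 1\le i\le k.
\]
The hypothesis on $I$ in $A$, applied via characterization (i) with $X=A/J$ and the given $Y$, yields that the comparison map
\[
\phi_{A/J,Y}^i\colon \Ext_{A/I}^i(A/J,Y)\longrightarrow \Ext_A^i(A/J,Y)
\]
is an isomorphism for $1\le i\le k$. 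Combining the two gives $\Ext_A^i(A/J,Y)=0$ in the required range, which by characterization (ii) is exactly the $k$-idempotence of $J$ in $A$.

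With the two-step case in hand, the induction is immediate: assuming $I_{l-1}$ is already $k$-idempotent in $A$, apply the two-step case to $I=I_{l-1}\subset J=I_l\subset A$, using that $I_l/I_{l-1}$ is $k$-idempotent in $A/I_{l-1}$ by hypothesis.

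The only place where one has to be attentive is the applicability of characterization (i): it requires \emph{both} arguments to be $A/I$-modules. This is automatic here because $I\subset J$ forces $A/J$ and every $A/J$-module to factor through $A/I$, so no module-theoretic translation is needed. Apart from this bookkeeping, the argument is a direct assembly of the equivalences recalled just before the statement, so I do not expect any genuine obstacle.
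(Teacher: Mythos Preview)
Your argument is correct. The paper does not actually give a proof of this proposition: it is introduced as ``the following obvious property of $k$-idempotent ideals'' and left at that. Your induction, reducing to the two-ideal case and then combining characterization~(ii) for $J/I$ in $A/I$ with characterization~(i) for $I$ in $A$, is exactly the natural way to supply the missing details, and your remark that both $A/J$ and $Y$ are automatically $A/I$-modules because $I\subset J$ pinpoints the only place where one could slip. There is nothing to compare against; you have simply written out what the authors deemed evident.
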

Note that from~ \cite[lemma~1.4(a) and
 Proposition~4.6]{idempotent-ideals}
 follows that
 an ideal $I$ is $1$-idempotent if and only if $I=AeA$ for some
 idempotent $e\in A$.

\begin{proposition}
 \label{twoidempotentcrit}
 Let $e\in A$ be an idempotent. An ideal $AeA$ is $2$-idempotent if
 and only if  the map induced by the multiplication in $A$ 
 $$
 Ae\otimes_{eAe} eA \to AeA
 $$
 is an isomorphism.
\end{proposition}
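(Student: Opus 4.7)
The multiplication map $\mu\colon Ae\otimes_{eAe}eA\to AeA$ is always surjective; write $K=\Ker\mu$ and $I=AeA$. The plan is to show that both implications hinge on the vanishing of $K$. The main preliminary step is to observe that $K$ carries an $A/I$-bimodule structure: since $e(Ae\otimes_{eAe}eA)=eAe\otimes_{eAe}eA\cong eA=eI$, the restriction $e\cdot\mu$ is the identity on $eA$, and analogously on the right, so $eK=Ke=0$ and consequently $IK=KI=0$.

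For the forward direction, assuming $I$ is $2$-idempotent, I would apply $(A/I)\otimes_A(-)$ to the short exact sequence $0\to K\to Ae\otimes_{eAe}eA\to I\to 0$ and use three evaluations: $A/I\otimes_A I=I/I^2=0$ since $I=AeA$ is automatically $1$-idempotent; $A/I\otimes_A(Ae\otimes_{eAe}eA)=(A/I)e\otimes_{eAe}eA=0$ because $Ae\subset I$ forces $(A/I)e=0$; and $A/I\otimes_A K\cong K$ by the bimodule property of $K$. The $\Tor$ long exact sequence then produces a surjection onto $K$ from $\Tor_1^A(A/I,I)\cong\Tor_2^A(A/I,A/I)$, which vanishes by hypothesis, forcing $K=0$.

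For the reverse direction, assume $\mu$ is an isomorphism, so that $I\cong Ae\otimes_{eAe}eA$ as $A$-bimodules. Combined with the standard identification $eA\otimes_A Ae\cong eAe$, associativity of the tensor product yields
\begin{equation*}
I\otimes_A I\cong Ae\otimes_{eAe}(eA\otimes_A Ae)\otimes_{eAe}eA\cong Ae\otimes_{eAe}eAe\otimes_{eAe}eA\cong I,
\end{equation*}
and a short trace shows that the composite coincides with the multiplication map $I\otimes_A I\to I$. From the long exact sequence of $0\to I\to A\to A/I\to 0$ tensored on the right with $I$, this forces $\Tor_1^A(I,A/I)=\Ker(I\otimes_A I\to I)=0$. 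To extend to arbitrary $A/I$-modules $Y$, I would take a surjection $(A/I)^{(J)}\twoheadrightarrow Y$ with kernel $K'$ (again an $A/I$-module); since $\Tor$ commutes with direct sums and $I\otimes_A K'=0$ by $1$-idempotency, the long exact sequence squeezes $\Tor_1^A(I,Y)=0$. Via $\Tor_2^A(A/I,Y)\cong\Tor_1^A(I,Y)$, this establishes condition~(iv) for $k=2$. The main obstacle will be the careful bookkeeping of left versus right $A$-module structures and bimodule actions across the various $\Tor$ long exact sequences; once the conventions are fixed, each intermediate identification is routine.
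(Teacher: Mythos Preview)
Your argument is correct. The paper, however, does not prove this proposition directly: its proof is a one-line citation to Lemma~1.4(b) and Proposition~4.6 of~\cite{idempotent-ideals}, where the equivalence is obtained as part of the general characterisation of $k$-idempotent ideals for Artin algebras. Your approach is therefore genuinely different in that it is self-contained: you isolate the $A/I$-bimodule structure on $K=\Ker\mu$ and run the $\Tor$ long exact sequences by hand, without appealing to the cited machinery. This buys a proof valid over an arbitrary ring, not only an Artin algebra. Two minor remarks on presentation: the phrase ``tensored on the right with $I$'' should read ``on the left'' to match your notation $\Tor_1^A(I,A/I)$; and the step $I\otimes_A K'=0$ is obtained more transparently from the hypothesis itself, since $I\otimes_A K'\cong Ae\otimes_{eAe}eA\otimes_A K'\cong Ae\otimes_{eAe}eK'=0$, which avoids the implicit dimension shift hiding behind ``by $1$-idempotency''.
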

\begin{proof}
 It follows from \cite[lemma~1.4(b),
 and Proposition~4.6]{idempotent-ideals}.
\end{proof}
\begin{definition}
 Let $e\in A$ be an idempotent. An ideal $I=AeA$ is called an
 \emph{heredity} ideal if
 \begin{enumerate}
  \item $e\rad(A)e = 0$;
  \item $I$ considered as a left $A$-module is projective. 
 \end{enumerate}
\end{definition}
\begin{proposition}
 Suppose that $I=AeA$ and $I_A$ is projective as an $A$-module. Then $I$ is
 a strong idempotent ideal.
\end{proposition}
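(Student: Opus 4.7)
The plan is to verify condition (ii) from the list of equivalent characterisations preceding the definition of strong idempotent ideal, namely that $\Ext_A^i(A/I,Y)=0$ for every $A/I$-module $Y$ and every $i\ge 1$. I will apply $\Hom_A(-,Y)$ to the short exact sequence
\[
0\to I\to A\to A/I\to 0
\]
and read off the long exact Ext sequence. Because $A$ is free and $I$ is projective by hypothesis, $\Ext_A^j(A,Y)$ and $\Ext_A^j(I,Y)$ vanish for every $j\ge 1$, so the connecting maps force $\Ext_A^i(A/I,Y)=0$ for all $i\ge 2$ essentially for free; equivalently, $0\to I\to A\to A/I\to 0$ is already a projective resolution of $A/I$ of length one.

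The only case requiring a genuine argument is $i=1$, where the relevant portion of the long exact sequence is
\[
\Hom_A(A,Y)\to \Hom_A(I,Y)\to \Ext_A^1(A/I,Y)\to 0,
\]
so it is enough to prove $\Hom_A(I,Y)=0$. Here I will use two small observations. Since $A$ is unital, $e=1\cdot e\cdot 1\in AeA=I$, and because $Y$ is an $A/I$-module, $eY\subseteq IY=0$. Moreover, $I=AeA$ is generated as a \emph{left} $A$-module by the set $\{eb\mid b\in A\}$. Given $f\in\Hom_A(I,Y)$ and $b\in A$, idempotency $e^2=e$ together with left $A$-linearity of $f$ yields
\[
f(eb)=f(e\cdot eb)=e\cdot f(eb)\in eY=0,
\]
so $f$ annihilates a generating set and therefore $f=0$.

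Combining the two parts shows $\Ext_A^i(A/I,Y)=0$ for every $i\ge 1$ and every $A/I$-module $Y$, and the cited equivalence then gives that $I$ is a strong idempotent ideal. I do not anticipate any real obstacle: the entire argument reduces to the one-line idempotency computation $f(eb)=e\,f(eb)\in eY=0$, after which projectivity of $I$ takes care of the higher Ext groups automatically via the length-one projective resolution of $A/I$.
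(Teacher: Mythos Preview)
Your argument is correct. The short exact sequence $0\to I\to A\to A/I\to 0$ with $A$ free and $I$ projective kills $\Ext_A^i(A/I,Y)$ for $i\ge 2$ immediately, and your computation $f(eb)=f(e\cdot eb)=e\,f(eb)\in eY=0$ handles $i=1$ cleanly. One minor comment: the paper's notation $I_A$ usually signals the \emph{right} module structure; if that reading is intended, the dual argument via $\Tor$ (using that $I\otimes_A Y$ is generated by elements $ae\otimes y=ae\cdot e\otimes y=ae\otimes ey=0$) goes through symmetrically, so nothing is lost.

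As for comparison with the paper: the paper does not give an argument at all---it simply invokes the definition together with \cite[Statement~3]{dlab-ringel}. Your proof is therefore genuinely different in that it is self-contained and elementary, requiring nothing beyond the long exact sequence and the idempotency trick. The advantage of the citation is brevity and the fact that the Dlab--Ringel statement is formulated in slightly greater generality; the advantage of your route is that the reader sees exactly why the idempotent hypothesis $I=AeA$ is needed (it forces $\Hom_A(I,Y)=0$), whereas projectivity alone would only give vanishing in degrees $\ge 2$.
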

\begin{proof}
 It follows from the definition of strong idempotent ideal and
 \cite[Statement~3]{dlab-ringel}.
\end{proof}

\begin{corollary}
If $I$ is a heredity ideal, then $I$ is strong idempotent. 
\end{corollary}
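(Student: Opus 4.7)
The plan is essentially to invoke the immediately preceding proposition. By Definition of a heredity ideal, $I$ has the form $I = AeA$ for some idempotent $e \in A$, and moreover condition (ii) states that $I$ is projective as a left $A$-module. These are exactly the hypotheses of the preceding proposition, whose conclusion is precisely that $I$ is strong idempotent. So the corollary follows by direct citation.

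Explicitly, I would write only: ``Since $I$ is a heredity ideal, $I = AeA$ for some idempotent $e$, and by condition (ii) of the definition the ideal $I$ is projective as a left $A$-module. Applying the previous proposition (which in turn rests on \cite[Statement~3]{dlab-ringel}), we conclude that $I$ is strong idempotent.''

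The one conceptual point worth recording is that condition (i) of the heredity definition, namely $e\, \rad(A)\, e = 0$, plays no role here. That condition is the additional ingredient distinguishing heredity ideals from arbitrary projective ideals of the form $AeA$ within the quasi-hereditary framework (where it is needed, for instance, to control the simple modules of $eAe$ and to make sense of the layering by heredity ideals). For the weaker property of being strong idempotent, projectivity of $I$ as a left $A$-module is already enough, as the preceding proposition records.

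There is no real obstacle: the work has already been done upstream in the proposition cited, and the corollary is a one-line specialization. The only item that might deserve verification, should one want a self-contained argument, is to trace through why projectivity of $I_A$ forces the Tor-vanishing $\Tor^A_i(A/I, Y) = 0$ for all $i \ge 1$ and all $A/I$-modules $Y$, which is one of the equivalent conditions for $I$ to be strong idempotent listed in Section~\ref{strong-idempotent}; this is exactly the content imported from \cite{dlab-ringel}.
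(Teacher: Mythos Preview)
Your proposal is correct and matches the paper's approach exactly: the corollary is stated without proof in the paper, being an immediate consequence of the preceding proposition (a heredity ideal satisfies $I=AeA$ and $I$ is projective as a left $A$-module by definition). Your observation that condition (i), $e\,\rad(A)\,e=0$, is not needed here is also accurate.
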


\begin{proposition}
 An ideal $I=AeA$ is a heredity ideal if and only if 
 \begin{enumerate}
  \item $I$ is $2$-idempotent;
  \item $e\rad(A)e =0$.
 \end{enumerate}
\end{proposition}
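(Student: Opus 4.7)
The plan is to prove the two implications separately, leaning on Proposition~\ref{twoidempotentcrit} (the criterion for $2$-idempotency) and on the earlier proposition asserting that projectivity of ${}_AI$ already implies strong idempotency, so as not to reprove their content.

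For the forward direction, suppose $I=AeA$ is heredity. Then $e\rad(A)e=0$ holds by definition, which is~(2). For~(1), the remaining clause of the definition is that $I$ is projective as a left $A$-module; the earlier proposition just cited then delivers that $I$ is strong idempotent, hence in particular $2$-idempotent.

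For the converse, assume~(1) and~(2). By Proposition~\ref{twoidempotentcrit} the multiplication map $Ae\otimes_{eAe}eA\to AeA=I$ is an isomorphism, so it suffices to show that $Ae\otimes_{eAe}eA$ is projective as a left $A$-module. The crucial intermediate claim is that ${}_{eAe}(eA)$ is projective. Granting this, $eA$ is a direct summand of a free $eAe$-module $(eAe)^{(J)}$, so $Ae\otimes_{eAe}eA$ is a direct summand of $Ae\otimes_{eAe}(eAe)^{(J)}\cong (Ae)^{(J)}$. Since ${}_AA=Ae\oplus A(1-e)$ via the idempotent decomposition $1=e+(1-e)$, the module $Ae$ is projective over $A$, hence so is $(Ae)^{(J)}$ and the summand $I$.

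To establish the intermediate claim, I would invoke the identity $\rad(eAe)=e\rad(A)e$, valid in the semiperfect (in particular finite-dimensional) context of this paper. Hypothesis~(2) then reads $\rad(eAe)=0$, so $eAe$ is semisimple and every left $eAe$-module, in particular $eA$, is projective. The main obstacle is precisely this identity $\rad(eAe)=e\rad(A)e$, which fails in full generality; since this section is an overview of results from~\cite{idempotent-ideals}, the cleanest approach is to cite the relevant statement there rather than reprove it.
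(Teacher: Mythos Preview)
Your argument is essentially correct, but it differs from the paper's approach: the paper gives no argument at all here and simply cites \cite[Statement~7]{dlab-ringel}. So you have supplied a genuine proof where the paper defers entirely to the literature.

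Your forward direction is fine. For the converse, your strategy---use Proposition~\ref{twoidempotentcrit} to identify $I$ with $Ae\otimes_{eAe}eA$, then deduce projectivity of ${}_AI$ from semisimplicity of $eAe$---is exactly the standard argument behind the cited result in Dlab--Ringel, and it works in the artinian/semiperfect setting where $\rad(eAe)=e\,\rad(A)\,e$ holds. You are right to flag that this identity is the only non-formal step and that it requires a finiteness hypothesis; the paper's ambient context (finite-dimensional algebras, cf.\ the applications in Section~\ref{application}) supplies this. One small correction: you attribute the needed background to \cite{idempotent-ideals}, but the paper actually sources this particular proposition to \cite{dlab-ringel}; the section draws on both references.
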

\begin{proof}
 It follows from \cite[Statement~7]{dlab-ringel}.
\end{proof}

\begin{corollary}
 \label{2idempotent2strongly}
 If $I=AeA$ is $2$-idempotent, and $e\rad(A)e =0$, then $I$ is
 strong idempotent.  
\end{corollary}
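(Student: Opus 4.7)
The plan is very short because this corollary is an immediate combination of the two results stated just before it. Specifically, the preceding proposition characterizes heredity ideals of the form $I=AeA$ by exactly the two hypotheses given here: $I$ is $2$-idempotent and $e\rad(A)e=0$. So from these hypotheses I would first invoke that proposition to conclude that $I$ is a heredity ideal. Then I would apply the earlier corollary (the one stating that every heredity ideal is strong idempotent) to conclude that $I$ is strong idempotent.

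In more detail, the argument would read: assume $I=AeA$ with $I$ being $2$-idempotent and $e\rad(A)e=0$. By the characterization proposition (the one following the definition of heredity ideal), these are exactly the two conditions equivalent to $I$ being a heredity ideal, so $I$ is a heredity ideal. By the corollary asserting that heredity ideals are strong idempotent, $I$ is strong idempotent, which is the claim.

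There is no real obstacle here, since the statement is just the composition of two implications already available in the text. The only thing to double-check is that the equivalence in the characterization proposition is stated in the direction we need (namely, that $2$-idempotent together with $e\rad(A)e=0$ \emph{implies} heredity), which it is, since the proposition is an ``if and only if''. No further computation or invocation of \cite{dlab-ringel} or \cite{idempotent-ideals} is required beyond what has already been done in the two preceding results.
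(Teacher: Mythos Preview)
Your proposal is correct and is exactly the intended argument: apply the preceding characterization proposition to conclude that $I$ is a heredity ideal, then invoke the earlier corollary that every heredity ideal is strong idempotent. The paper leaves this corollary without an explicit proof precisely because it is this immediate composition of the two preceding results.
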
 
\subsection{Criterion of heredity}
\label{criterium}
In this section $\Gamma$ is always an ordered positive monoid    and
$A$ a $\Gamma$-graded algebra. Let $S$ be a $\Gamma$-set, that is a
set where $\Gamma$ acts by endomorphisms. 
Set $B=\Maps(S,\K)$. Then $B$ is a $\Gamma$-algebra and we can
consider the skew product algebra 
$$
C = A\ltimes_{\Gamma} B. 
$$
For simplicity, if $a\in A$ and $b\in B$ we will sometimes write $ab$ for the
element $a\otimes b$ of $C$.
For each subset $X$ of $S$ there is an idempotent in $C$
$$
\chi_{X} (x) = 
\begin{cases}
 1 & \mbox{if $x\in X$}\\
 0 & \mbox{if $x\notin X$.}
\end{cases}
$$
Define the algebra $C(X)$ by 
$$
C(X) = C/ C\chi_{\bar X} C, 
$$
where $\bar X$ denotes the complement of $X$ in $S$. 
In this section, we will prove some general results concerning the algebras
$C(X)$.

We say that
$\Gamma$ acts on effectively, if $\gamma_1 x \ne \gamma_2 x$ for all
$x\in S$ and all $\gamma_1\ne \gamma_2$ from $\Gamma$. 
From now on we will assume, that $\Gamma$ acts effectively on
$S$.
We introduce a partial order on $S$, by 
$$
x \le_{\Gamma} y \Leftrightarrow \exists\gamma\in \Gamma: y = \gamma x.
$$
\begin{definition}
We say that $X\subset S$ is convex if for all $x,y\in X$ it contains
all $z\in S$ which lie between $x$ and $y$.
\end{definition}
\begin{proposition}
 \label{split}
 Let $X$ be a convex subset of $S$. Then 
$$
C(X) \cong \chi_X C\chi_X.
$$
\end{proposition}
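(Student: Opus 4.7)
The plan is to show that the canonical projection $\pi\colon C \twoheadrightarrow C(X) = C/C\chi_{\bar X}C$, when restricted to the subspace $\chi_X C \chi_X$, induces an isomorphism of algebras. Let $\psi$ denote this restriction. It is automatically an algebra homomorphism, and it sends the idempotent $\chi_X$ to the unit of $C(X)$ because $\chi_X = 1 - \chi_{\bar X} \equiv 1 \pmod{C\chi_{\bar X}C}$. Surjectivity follows from the Peirce decomposition associated with $1 = \chi_X + \chi_{\bar X}$: for any $c \in C$,
$$
 c = \chi_X c \chi_X + \chi_X c \chi_{\bar X} + \chi_{\bar X} c \chi_X + \chi_{\bar X} c \chi_{\bar X},
$$
and the last three summands lie in $C\chi_{\bar X}C$; hence $\pi(c) = \psi(\chi_X c \chi_X)$.

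The main point is injectivity, and this is where convexity of $X$ enters. The kernel of $\psi$ is $\chi_X C \chi_X \cap C\chi_{\bar X}C$. Using the Peirce decomposition once more, together with $\chi_{\bar X}^2 = \chi_{\bar X}$, this intersection equals the product $(\chi_X C \chi_{\bar X})(\chi_{\bar X} C \chi_X)$, so it suffices to prove that this product vanishes. I would verify this on homogeneous generators $a_\gamma \otimes b \in A \ltimes_\Gamma B$ by first computing, via the interchange map $T$ of Section~\ref{skew},
$$
\chi_X (a_\gamma \otimes b)\chi_{\bar X} = a_\gamma \otimes \chi_X^\gamma\, b\, \chi_{\bar X}, \qquad \chi_{\bar X}(a_\delta \otimes b')\chi_X = a_\delta \otimes \chi_{\bar X}^\delta\, b'\, \chi_X,
$$
where the action on $B = \Maps(S,\K)$ is $f^\gamma(x) = f(\gamma x)$, so that $\chi_X^\gamma$ is the characteristic function of $\{x \in S : \gamma x \in X\}$. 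Multiplying two such elements in $C$ and evaluating the $B$-component at an arbitrary $x \in S$, one obtains a scalar multiple of $\chi_X(\gamma\delta x)\chi_{\bar X}(\delta x)\chi_X(x)$, which is nonzero only when $x \in X$, $\delta x \in \bar X$, and $\gamma\delta x \in X$. But by the definition of $\le_\Gamma$ we have $x \le_\Gamma \delta x \le_\Gamma \gamma\delta x$, so convexity of $X$ forces $\delta x \in X$, contradicting $\delta x \in \bar X$. Hence the product vanishes identically.

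The chief technical obstacle is the bookkeeping with the twisting action: one must be precise about how $\chi_X$ and $\chi_{\bar X}$ interact with the multiplication in $A\ltimes_\Gamma B$ via $T$ and correctly track how the grading degree $\gamma$ twists the $B$-factor. Once the formulas $\chi_X^\gamma(x) = \chi_X(\gamma x)$ and the product rule $(a_\gamma\otimes b)(a_\delta\otimes b') = a_\gamma a_\delta\otimes b^\delta b'$ are used consistently, the convexity condition translates immediately into the vanishing $(\chi_X C\chi_{\bar X})(\chi_{\bar X}C\chi_X)=0$, and the isomorphism $C(X) \cong \chi_X C\chi_X$ follows.
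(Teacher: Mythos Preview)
Your argument is correct and follows essentially the same route as the paper: both prove injectivity of the restriction of $\pi$ to $\chi_X C\chi_X$ by showing that $\chi_X C\chi_{\bar X}C\chi_X=0$, which reduces immediately to the convexity condition via the chain $x\le_\Gamma \delta x\le_\Gamma \gamma\delta x$. The only difference is organisational: the paper decomposes $\chi_X C\chi_X=\bigoplus_{x,y\in X}\chi_y C\chi_x$ and checks $\chi_y C\chi_{\bar X}C\chi_x=0$ for each pair of points, whereas you phrase the same computation globally via the Peirce decomposition and then evaluate at a point; the underlying calculation is identical.
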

\begin{proof}
 Note, that we always have a surjective homomorphism
 \begin{align*}
\pi:&  \chi_X C \chi_X \to C(X)\\
 & a\otimes b  \mapsto [a\otimes b].
 \end{align*}
Let $x,y\in X$ and suppose that $y= \gamma x$ for some
$\gamma\in\Gamma$. Then 
$$
\chi_y C \chi_{\bar X} C \chi_x = \left\langle \chi_y a_1\chi_z a_2\chi_x | \gamma
=\gamma_2\gamma_1, z=\gamma_1 x, z\in \bar{X}, a_1\in A_{\gamma_1},
a_2\in A_{\gamma_2}\right\rangle.
$$
But if $z$ lies between $x$ and $y$, then $z\in X$ and the above set
is empty. Therefore the restriction of $\pi$ on $\chi_y C\chi_x $ is
injective, and since
$$
\chi_X C \chi_X = \bigoplus_{x,y\in X} \chi_y C \chi_x,
$$
the map $\pi$ is injective. 
\end{proof}
\begin{corollary}
 \label{case1}
 Let $J$ be a homogeneous basis of $A$ and $Y$ a $\Gamma$-convex
 subset of $S$. Then 
 $$
 \left\{ a\chi_y\middle| a \in J; \ y, \deg(a)y \in Y \right\}, 
 $$
 is a basis of $C(Y)$.
\end{corollary}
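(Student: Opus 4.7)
The plan is to use Proposition~\ref{split} to reduce to computing $\chi_Y C\chi_Y$, and then cut down a canonical basis of $C$ by the corner projection $x\mapsto \chi_Y x\chi_Y$.

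First I would invoke Proposition~\ref{split} to identify $C(Y)\cong \chi_Y C\chi_Y$; this applies because a $\Gamma$-convex subset of $S$ is in particular convex in the sense used there. The problem thereby reduces to exhibiting $\{a\chi_y\mid a\in J,\ y,\deg(a)y\in Y\}$ as a $\K$-basis of $\chi_Y C\chi_Y$. As a vector space $C = A\otimes B$, so the family $\{a\chi_y\mid a\in J,\ y\in S\}$ is a $\K$-basis of (the relevant, finitely supported part of) $C$: this is immediate from $J$ being a basis of $A$ and the $\chi_y$ being the natural basis of finitely supported maps in $B$. The strategy is then to push this canonical basis through the projection $x\mapsto \chi_Y x\chi_Y$ of $C$ onto $\chi_Y C\chi_Y$ and see which basis vectors survive.

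The heart of the proof is therefore the explicit computation of $\chi_Y(a\chi_y)\chi_Y$ for $a\in J$ homogeneous of degree $\gamma$ and $y\in S$. Using the interchange rule of the skew product, $\chi_Y\cdot a = a\cdot \chi_Y^\gamma$, together with $\chi_Y^\gamma(s)=\chi_Y(\gamma s)$, one finds $\chi_Y^\gamma=\chi_{\gamma^{-1}(Y)}$ where $\gamma^{-1}(Y)=\{s\in S\mid \gamma s\in Y\}$. Hence, using commutativity and pointwise multiplication in $B$,
$$
\chi_Y(a\chi_y)\chi_Y \;=\; a\otimes\bigl(\chi_{\gamma^{-1}(Y)}\cdot\chi_y\cdot\chi_Y\bigr) \;=\; \begin{cases} a\chi_y & \text{if } y\in Y\text{ and }\gamma y\in Y,\\ 0 & \text{otherwise.}\end{cases}
$$

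Consequently, the image of the canonical basis under the corner projection, after discarding zeros, is precisely the proposed set. This gives spanning of $\chi_Y C\chi_Y$, while linear independence is inherited from the canonical basis of $C$. The only genuine obstacle is keeping the interchange-rule bookkeeping straight; everything else is routine.
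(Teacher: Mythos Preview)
Your proposal is correct and is precisely the intended argument: the paper gives no separate proof of this corollary, since it is meant to be an immediate consequence of Proposition~\ref{split}, and your computation of $\chi_Y(a\chi_y)\chi_Y$ via the interchange rule is exactly the way one reads it off. Your parenthetical hedge about ``the relevant, finitely supported part of $B$'' is apt: strictly speaking the displayed set only spans $\chi_Y C\chi_Y$ when $Y$ is finite (otherwise $1_A\otimes\chi_Y$ itself is not a finite combination), but this is a defect of the paper's statement rather than of your argument, and all later uses of the corollary are for finite $Y$.
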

Let $X\subset Y$ be two subsets of $S$. Then we have a surjective
homomorphism
$$
C(Y) \to C(X).
$$
We give a criterion for projectivity of $C(X)$ over $C(Y)$. 

Suppose $A=A_1\otimes A_2$ is a product of two $\Gamma$-graded
algebras, that is 
$$
A_{\gamma} = \bigoplus_{\gamma=\gamma_1\gamma_2}
(A_1)_{\gamma_1}\otimes (A_2)_{\gamma_2}.
$$
For $i=1,2$, let $\Gamma_i$ be submonoid        of $\Gamma$ such that for
$\gamma\notin \Gamma_i$ the space $(A_i)_{\gamma}$ is zero. Denote by
$C_i$ the skew product $A_i\ltimes_{\Gamma_i} B$. 

\begin{theorem}
 \label{case2}
 Let $J_1$ and $J_2$ be homogeneous bases of $A_1$ and $A_2$,
 respectively. 
 Suppose that there are
 subsets $I_y$ of $J_2$ for each $y\in Y$, such that the set
 $$
 \left\{ a\otimes \chi_y | a \in I_y,\ y\in Y \right\}
 $$
 is a basis  of $C_2(Y)$ and the set
 $$
 \left\{ a_1a_2\otimes \chi_x | a_1\in J_1,\ a_2\in I_y,\ y\in Y,\
 \deg(a_1a_2)y\in Y  \right\}
 $$
 is a basis of $C(Y)$. Denote by $Z$ the difference $Y\setminus X$ and
 by $e$ the idempotent $\chi_Z$ of $C(Y)$. If 
 \begin{enumerate}
  \item  $\Gamma_1 X \cap Y = X$;
  \item  $\Gamma_2 Z \cap Y = Z$,
 \end{enumerate}
 then  
 \begin{enumerate} 
  \item $$
C(Y)e\otimes_{e C(Y)e} e C(Y) \cong  C(Y)e C(Y); 
$$  
  \item the set
  $$
   \left\{ a_1a_2\otimes \chi_x | a_1\in J_1,\ a_2\in I_x; x,
  \deg(a_2)x,\deg(a_1a_2)x\in X \right\}
  $$
  is a basis of $C(X)$. \end{enumerate}
 \end{theorem}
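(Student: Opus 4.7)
The plan is to identify $C(Y)eC(Y)$ explicitly using the basis
$$B_Y := \{a_1a_2\otimes\chi_y : a_1\in J_1,\ a_2\in I_y,\ y\in Y,\ \deg(a_1a_2)y\in Y\}$$
of $C(Y)$ and the two hypotheses. Write $\gamma_i=\deg(a_i)$ and set
$$B_Y^Z := \{a_1a_2\otimes\chi_y\in B_Y : \gamma_2 y\in Z\}.$$
For any element of $B_Y$ both $\gamma_2 y$ and $\gamma_1\gamma_2 y$ lie in $Y$ (the former from $a_2\in I_y$, the latter from the $B_Y$-condition). Hypothesis (ii) applied to $y\in Z$ then forces $\gamma_2 y\in Z$, while the contrapositive of (i) applied to $\gamma_2 y\in Y$ with $\gamma_1\gamma_2 y\in Z$ also forces $\gamma_2 y\in Z$. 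Thus $B_Y^Z$ equivalently consists of those $B_Y$-elements for which at least one of $y$, $\gamma_2 y$, $\gamma_1\gamma_2 y$ lies in $Z$.

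I claim $C(Y)eC(Y)=\operatorname{span}_\K B_Y^Z$. The inclusion $\supseteq$ is immediate from the factorization
$$a_1a_2\otimes\chi_y = (a_1\otimes\chi_{\gamma_2 y})\cdot e\cdot(a_2\otimes\chi_y),$$
valid whenever $\gamma_2 y\in Z$ (using $\chi_{\gamma_2 y}=\chi_{\gamma_2 y}\cdot e$). For $\subseteq$, I expand a generator $c_1ec_2$ with $c_1=a_1a_2\otimes\chi_y$ and $c_2=a_1'a_2'\otimes\chi_{y'}$ in $B_Y$: this is nonzero only when $y\in Z$ and $\gamma_1'\gamma_2'y'=y$, in which case (using commutativity of $A_1$ and $A_2$ inside $A=A_1\otimes A_2$) it equals $a_1a_1'\cdot a_2a_2'\otimes\chi_{y'}$. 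Expanding $a_1a_1'\in A_1$ and $a_2a_2'\in A_2$ in the bases $J_1$ and $J_2$ and discarding summands that vanish in $C(Y)$, one obtains $\sum_{k,l}\alpha_k\tilde\beta_l(b_kc_l\otimes\chi_{y'})$ with $c_l\in I_{y'}$. For each surviving term, set $y'':=\gamma_2'y'\in Y$; then $\gamma_1'y''=y\in Z$, so hypothesis (i) gives $y''\in Z$; and $\gamma_2 y''=\gamma_2\gamma_2'y'\in Y$ (since $c_l\in I_{y'}$) combined with $y''\in Z$ gives $\gamma_2\gamma_2'y'\in Z$ by hypothesis (ii). So the intermediate $\deg(c_l)y'$ of $b_kc_l\otimes\chi_{y'}$ lies in $Z$, placing it in $B_Y^Z$.

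Part (ii) follows at once: the complement $B_Y\setminus B_Y^Z$ consists of those $a_1a_2\otimes\chi_y\in B_Y$ with $\gamma_2 y\in X$, and the implications above force $y,\gamma_1\gamma_2 y\in X$ as well. So $B_Y\setminus B_Y^Z$ coincides with the set named in the statement, and as the complement of a spanning set of $C(Y)eC(Y)$ inside the basis $B_Y$ of $C(Y)$, it is a basis of $C(X)=C(Y)/C(Y)eC(Y)$.

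For Part (i), the multiplication $\mu\colon C(Y)e\otimes_{eC(Y)e}eC(Y)\to C(Y)eC(Y)$ is surjective. To each $a_1a_2\otimes\chi_y\in B_Y^Z$ I associate the canonical preimage $(a_1\otimes\chi_{\gamma_2 y})\otimes(a_2\otimes\chi_y)$, whose left factor lies in $C(Y)e$ (as $\gamma_2 y\in Z$) and whose right factor lies in $eC(Y)$. The crucial point is that any basis element $a_1a_2\otimes\chi_z$ of $C(Y)e$ factors as $(a_1\otimes\chi_{\gamma_2 z})\cdot(a_2\otimes\chi_z)$ with the right factor in $eC(Y)e$ (since $z$ and $\gamma_2 z$ both lie in $Z$), and symmetrically for any basis element of $eC(Y)$. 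Repeatedly applying the balance $u\cdot w\otimes v = u\otimes w\cdot v$ with $w\in eC(Y)e$, and re-expanding via $A_1\cdot A_2 = A_2\cdot A_1$, one rewrites an arbitrary pure tensor as a $\K$-linear combination of canonical preimages. Since $\mu$ carries these preimages bijectively to the linearly independent set $B_Y^Z$, they are linearly independent in the tensor product; hence they form a basis and $\mu$ is an isomorphism. The main obstacle is the careful bookkeeping throughout this reduction: one must verify that every intermediate point produced stays in the correct part of $Y$, which is exactly what hypotheses (i) and (ii) guarantee.
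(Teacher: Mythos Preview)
Your overall strategy matches the paper's: you correctly identify $B_Y^Z$ as the subset of $B_Y$ that should span $C(Y)eC(Y)$, and the canonical preimages $(a_1\otimes\chi_{\gamma_2 y})\otimes(a_2\otimes\chi_y)$ in the tensor product. However, the computational core of both the inclusion $C(Y)eC(Y)\subseteq\operatorname{span}B_Y^Z$ and the reduction step in Part~(i) has a genuine gap.

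You invoke ``commutativity of $A_1$ and $A_2$ inside $A=A_1\otimes A_2$'' to rewrite $a_1a_2a_1'a_2'$ as $(a_1a_1')(a_2a_2')$, and later ``re-expanding via $A_1\cdot A_2=A_2\cdot A_1$''. But the decomposition $A=A_1\otimes A_2$ in the hypotheses is only an isomorphism of $\Gamma$-graded \emph{vector spaces}, not of algebras. The theorem is applied (through Theorem~\ref{induction}) to $A=\A_n(\K)$ with the PBW factors $\A_{n,k}(\K)$, and these do not commute: already $e_{12}e_{23}\neq e_{23}e_{12}$ in $\A_3(\K)$. Once the identity $a_1a_2a_1'a_2'=(a_1a_1')(a_2a_2')$ fails, you lose control over the $\Gamma_2$-degree of each term in the re-expansion and can no longer conclude that those terms lie in $B_Y^Z$.

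The paper avoids commutativity by a two-stage re-expansion. It factors a generator of $C(Y)eC(Y)$ as
\[
(a_1'a_2'\otimes\chi_z)\,(a_1\otimes\chi_{z'})\,(a_2\otimes\chi_y),\qquad z'=\gamma_2 y,
\]
first expands $a_1'a_2'a_1\otimes\chi_{z'}$ in the given basis of $C(Y)$ (obtaining terms $a_1''a_2''\otimes\chi_{z'}$ with $a_2''\in I_{z'}$; since $z'\in Z$, hypothesis~(ii) gives $\deg(a_2'')z'\in Z$), and then expands $a_2''a_2\otimes\chi_y$ in the given basis of $C_2(Y)$, using only that $A_2$ is a \emph{subalgebra}. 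This produces terms $a_1''a_2'''\otimes\chi_y$ with $\deg(a_2''')y=\deg(a_2'')z'\in Z$, as required. The same device---moving $A_2$-factors across $\otimes_{eC(Y)e}$ and re-expanding---handles the basis of the tensor product in Part~(i). Your argument can be repaired along exactly these lines, but as written it does not go through in the generality needed.
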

\begin{proof}

For $i=1,2$ denote by 
$C_i(Y)$ the algebras
$$
C_i/C_i\chi_{\bar Y}C_i.
$$
The proof of the theorem is a consequence of the following two
lemmata.
\begin{lemma}
 The set
$$
J= \left\{ a_1 a_2 \otimes \chi_y | a_1\in
J_1,\ a_2\in I_{y},\ y\in Y,\ \deg(a_1a_2)y\in Y, \ \deg(a_2)y\in Z  \right\},
$$
is a basis of 
$ C(Y)e C(Y) $.
\end{lemma}
\begin{proof}
 Note, that every element of $J$ is an element of $C(Y)eC(Y)$.
 In fact, denote $\deg(a_2)$ by $\gamma$, then
 $$
 a_1a_2\otimes \chi_y = a_1\otimes \chi_{\gamma y} \cdot a_2 \in
 C(Y)\chi_{\gamma y} C(Y) \subset C(Y) e C(Y). 
 $$
 Moreover, $J$ is a subset of a basis of $C(Y)$. Thus, it is enough to
 check that $J$ generates $C(Y)eC(Y)$. 
 It is clear that $C(Y)eC(Y)$ is generated by the set
 $$
 \tilde{J}= \left\{ a_1'a_2'\otimes \chi_z \cdot a_1a_2\otimes \chi_y
 \middle|
 \begin{array}{l} a_1,a_1'\in J_3;\\ \ a_2\in I_y,
  a_2'\in I_z;\ y\in Y\\
  \deg(a_1'a_2')z \in Y\\
  \deg(a_1a_2)y=z\in Z\end{array}\\
\right\}.
 $$
Let 
$a_1'a_2'\otimes \chi_z \cdot a_1a_2\otimes \chi_y
$ be an element of   $\tilde{J}$. Denote by $\gamma_i$ the degree of
$a_i$. Then $\gamma_1\gamma_2 y$ is an element of $Z$. From
the conditions of theorem it follows, that $z=\gamma_2 y$ is an element
of $Z$. In fact,  assume $z\in X$, then $\gamma_1 z\in X$ and
thus $\gamma_1\gamma_2 y\in X$. Contradiction. 

Now, the product
$$
a_1'a_2'\otimes \chi_z a_1 = a_1'a_2'a_1\otimes
\chi_{z}
$$
can be written as a linear combination of elements
$$
a_1''a_2''\otimes \chi_{z}
$$
with $a_1''\in J_1$,  $a_2''\in I_{z}$, since the set 
$$
 \left\{ a_1a_2\otimes \chi_x | a_1\in J_1,\ a_2\in I_y,\ y\in Y  \right\}
 $$
is a basis of $C(Y)$. And the
products
$$
a_2''\otimes \chi_{z} \cdot a_2 \otimes \chi_{x} = a_2''a_2\otimes
\chi_{x}
$$
can be written as linear combinations of $a_2'''\otimes
\chi_{x}$, where $a_2'''\in I_{x}$, since 
the set
 $$\left\{ a\otimes \chi_y | a\in I_y,\
y\in Y \right\}$$ is a basis of $C_1(Y)$. Moreover, in each case,
$$\deg(a_2''') y = \deg(a_2'')\deg(a_2)y = \deg(a_2'')z'\in \Gamma_2
Z\cap Y =  Z$$
and
$$
\deg(a_1'')\deg(a_2''')y = \deg(a_1'')\deg(a_2'')z' =
\deg(a_1'a_2')\deg(a_1)z'  = \deg(a_1'a_2')z \in Y.
$$
Thus every element of $\tilde{J}$ can be written as a linear
combination of elements from $J$. 
\end{proof}
\begin{lemma} The set   
 $$
 I= \left\{ (a_1 \otimes \chi_z) \otimes (a_2\otimes \chi_y) | a_1\in
J_1,\ a_2\in I_{y},\ \deg(a_2)y=z\in Z,\ \deg(a_1a_2)y\in Y  \right\}
 $$
is a basis of $C(Y)e\otimes_{eC(Y)e} eC(Y)$.
\end{lemma}
\begin{proof}
It is clear that all elements of $I$ are elements of
$C(Y)e\otimes_{eC(Y)e} eC(Y)$.
Since $\pi(I) = J$ is a basis of $C(Y)\chi_Z C(Y)$, it follows that
the elements of $I$ are linear independent. 
We show, that every element of $C(Y)e\otimes_{eC(Y)e} eC(Y)$ can be
written as a linear combination of elements from $I$. 

It is clear that $C(Y)e\otimes_{eC(Y)e} eC(Y)$ is generated by the set
$$
\tilde{I} = \left\{ (a_1'a_2'\otimes \chi_z)\otimes
(a_1a_2) \otimes \chi_y \middle|\begin{array}{c} a_1',a_1\in J_1; a_2'\in
I_z; a_2\in I_y;\\
y\in Y; z = \deg(a_1a_2)y \in Z\end{array} \right\}. 
$$
Now
$a_1a_2\otimes \chi_y  = a_1\otimes \chi_{y'}\cdot a_2\otimes y$ and
$y'\in Z$, because otherwise $$z= \deg(a_1)\deg(a_2) y = \deg(a_1) y'
\in \Gamma_1 X\cap Y = X,$$ that contradicts to the condition
$z\in Z$. Therefore $a_1\otimes \chi_{y'}= \chi_z\cdot a_1\cdot
\chi_{y'}$ is an element of $eC(Y)e$. Thus 
$$
(a_1'a_2'\otimes \chi_z)\otimes
(a_1a_2 \otimes \chi_y) = (a_1'a_2'a_1\otimes \chi_{y'} ) \otimes
(a_2\otimes \chi_y).
$$
Now $a_1'a_2'a_1\otimes \chi_{y'}$ can be written as a linear
combination of elements of the form $a_1''a_2''\otimes
\chi_{y'}$, with $a_1''\in J_1$ and $a_2''\in I_{y'}$. Since
$y'\in Z$, we have that $\deg(a_2'') y' \in \Gamma_2 Z \cap Y = Z$.
Therefore, $a_2''\chi_{y'} \in eC(Y) e$ and
$$
(a_1''a_2''\otimes \chi_{y'})\otimes (a_2\otimes \chi_y) =
(a_1''\otimes \chi_{y'}) \otimes (a_2''a_2\otimes \chi_y).
$$
Now, since $\{a_2\otimes \chi_y | a_2\in I_y,\ y\in Y\}$ is a basis of
$C_2(Y)$ , we can write
$a_2''a_2\otimes \chi_y$ as a linear combination of elements of the
form $a_2'''\otimes \chi_y$, where $a_2'''\in I_y$, moreover
$\deg(a_2''')y= \deg(a_2'')\deg(a_2)y= \deg(a_2'')\chi_{y'} \in
\Gamma_2 Z \cap Y = Z$. 
\end{proof}

From two previous lemmata it follows that $\pi$ gives a bijection
between the basis $I$ of $C(Y)e\otimes_{eC(Y)e} eC(Y)$ and the basis
$J$ of $C(Y)eC(Y)$. Therefore $\pi$ is an isomorphism.

The last claim of the theorem follows from the description of bases of
$C(Y)$ and $C(Y) e C(Y)$.

\end{proof}
\begin{corollary}
 \label{strange}
 Under the same conditions as in Theorem~\ref{case2}, suppose  $Z=\{z_1,z_2,\dots,z_m\}$ and
 \begin{enumerate}
  \item $i<j$, if $z_i \le_{\Gamma_2} z_j$;
  \item $j<i$, if $z_i \le_{\Gamma_1} z_j$.
 \end{enumerate}
 Denote by $X_k$ the set $X\cup\left\{ z_1,\dots, z_k \right\}$ and by
 $e_k$ the idempotent $\chi_{z_k}$ in $C(X_k)$. Then the ideal
 $C(X_k)e_kC(X_k)$ is $2$-idempotent.

 If additionally $\Rad(A_{\epsilon})=0$, then 
 $C(X_k)e_kC(X_k)$ are heredity ideals and the ideal
 $C(Y)\chi_{Z}C(Y)$ is strong idempotent. 
\end{corollary}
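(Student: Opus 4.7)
The plan is to proceed by induction, peeling off the elements $z_k$ one at a time. At step $k$ (for $k$ from $m$ down to $1$), I apply Theorem~\ref{case2} to the pair $(X_k, X_{k-1})$, which plays the role of $(Y, X)$ in the theorem; then $X_k\setminus X_{k-1}=\{z_k\}$ and the idempotent $e$ appearing in Theorem~\ref{case2} becomes exactly $e_k=\chi_{z_k}$. The crucial observation is that the two orderings imposed on $Z$ in (i) and (ii) are tailored precisely so that the intersection hypotheses of Theorem~\ref{case2} hold at every intermediate $X_k$.

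Verification of the two intersection conditions goes as follows. For $\Gamma_1 X_{k-1}\cap X_k=X_{k-1}$: suppose $u\in X_{k-1}$, $\gamma_1\in\Gamma_1$, and $\gamma_1u\in X_k$. If $u\in X$, the hypothesis $\Gamma_1 X\cap Y=X$ from Theorem~\ref{case2} gives $\gamma_1u\in X\subset X_{k-1}$. If $u=z_i$ with $i<k$, then $\gamma_1u=z_k$ would yield $z_i\le_{\Gamma_1} z_k$, which by ordering~(ii) forces $k<i$, a contradiction; hence $\gamma_1u\in X_{k-1}$. The identity $\Gamma_2\{z_k\}\cap X_k=\{z_k\}$ is dual: $\gamma_2z_k\in X_k$ combined with $\Gamma_2 Z\cap Y=Z$ forces $\gamma_2z_k=z_j$ for some $j\le k$; then $z_k\le_{\Gamma_2}z_j$ together with ordering~(i) forces $k\le j$, whence $j=k$. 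The basis hypotheses of Theorem~\ref{case2} at level $X_k$ are supplied by Corollary~\ref{case1} by taking $I_y^{(k)}=\{a\in J_2\mid \deg(a)y\in X_k\}$ for $y\in X_k$. Conclusion~(1) of Theorem~\ref{case2} combined with Proposition~\ref{twoidempotentcrit} then yields that $C(X_k)e_kC(X_k)$ is $2$-idempotent.

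For the second part, assume $\Rad(A_\epsilon)=0$; it suffices to show $e_k\rad(C(X_k))e_k=0$, which together with $2$-idempotence will give heredity (by the characterisation of heredity ideals) and strong idempotence (by Corollary~\ref{2idempotent2strongly}). Corollary~\ref{structureofradical} decomposes $\rad(C(X_k))=\Rad((C(X_k))_\epsilon)\oplus\bigoplus_{\gamma\ne\epsilon}(C(X_k))_\gamma$. For $\gamma\ne\epsilon$, the piece $e_k(C(X_k))_\gamma e_k$ vanishes by effectiveness of the $\Gamma$-action on $S$: a basis element $a\otimes\chi_y$ of degree $\gamma$ survives the sandwich $\chi_{z_k}(\cdot)\chi_{z_k}$ only when $y=z_k$ and $\gamma z_k=z_k$, which forces $\gamma=\epsilon$. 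For the $\epsilon$-part, a direct computation identifies $(C(X_k))_\epsilon\cong A_\epsilon\otimes \Maps(X_k,\K)$ as a product of copies of $A_\epsilon$ indexed by $X_k$, so its Jacobson radical is $\Rad(A_\epsilon)\otimes\Maps(X_k,\K)=0$. Hence $e_k\rad(C(X_k))e_k=0$.

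Finally, to pass from heredity of each $C(X_k)e_kC(X_k)$ to strong idempotence of $C(Y)\chi_ZC(Y)$, I consider the chain $0\subset I_1\subset\cdots\subset I_m=C(Y)\chi_ZC(Y)$ of ideals in $C(Y)$, where $I_k=\Ker(C(Y)\twoheadrightarrow C(X_{m-k}))$. Under the identification $C(Y)/I_{k-1}\cong C(X_{m-k+1})$, the successive quotient $I_k/I_{k-1}$ becomes $C(X_{m-k+1})e_{m-k+1}C(X_{m-k+1})$, which is strong idempotent by the preceding paragraph. Proposition~\ref{chaine}, applied for every $k\in\N$, then yields strong idempotence of $C(Y)\chi_ZC(Y)$. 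The main obstacle throughout is the combinatorial bookkeeping required to ensure that the two orderings on $Z$ propagate the intersection hypotheses of Theorem~\ref{case2} down through every $X_k$; once that is set up, the remainder is a routine assembly of the results of the previous subsections.
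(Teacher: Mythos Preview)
Your overall architecture matches the paper's: peel off the $z_k$ one at a time (the paper phrases this as induction on $m=|Z|$, removing $z_N$ first), and use the two ordering hypotheses to verify the intersection conditions $\Gamma_1 X_{k-1}\cap X_k = X_{k-1}$ and $\Gamma_2\{z_k\}\cap X_k=\{z_k\}$ needed for Theorem~\ref{case2}. Your verification of those intersection conditions, your heredity argument, and your use of Proposition~\ref{chaine} are all fine.

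There is, however, a genuine gap in one step. To apply Theorem~\ref{case2} at level $X_k$ you need the two basis hypotheses there: a basis $\{a\otimes\chi_y\mid a\in I^{(k)}_y,\,y\in X_k\}$ of $C_2(X_k)$ and a compatible basis of $C(X_k)$. You claim these are ``supplied by Corollary~\ref{case1}'', but Corollary~\ref{case1} requires $X_k$ to be $\Gamma$-convex (respectively $\Gamma_2$-convex), and nothing in the hypotheses of Corollary~\ref{strange} gives this; indeed $Y$ itself is not assumed convex in Theorem~\ref{case2}. So your appeal to Corollary~\ref{case1} is illegitimate.

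The paper closes this gap differently: it uses the \emph{output} of Theorem~\ref{case2} to feed the next step. Conclusion~(ii) of Theorem~\ref{case2}, applied at $(X_{k+1},X_k)$, gives a basis of $C(X_k)$ of exactly the required shape with $I'_x=\{a\in I_x\mid \deg(a)x\neq z_{k+1}\}$. For the $C_2(X_k)$ basis one needs an additional observation: since $\Gamma_2\{z_{k+1}\}\cap X_{k+1}=\{z_{k+1}\}$, one has $C_2(X_{k+1})e_{k+1}C_2(X_{k+1})=e_{k+1}C_2(X_{k+1})$, so $C_2(X_k)$ inherits the basis $\{a\chi_x\mid a\in I'_x,\,x\in X_k\}$. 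This inductive propagation of the bases is the missing idea in your argument; once you insert it in place of the reference to Corollary~\ref{case1}, your proof becomes complete and essentially identical to the paper's.
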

\begin{proof}
 We shall prove corollary by induction on $m$. For $m=1$ the claim follows
 from Theorem~\ref{case2}.

 Suppose we proved corollary for all $m\le N-1$. We shall prove it for
 $m=N$. Let us check that we can apply Theorem~\ref{case2} to the sets
 $X'=X\cup\left\{ z_1,\dots,z_{N-1} \right\}$ and $Z'=\left\{ z_N
 \right\}$. We have $\Gamma_2Z'\cap Y = Z'$. Suppose that this does
 not hold, then there exists $\gamma\in\Gamma$ such that
$\gamma z_N\in Y$ and $\gamma z_N\ne z_N$. Then, since $\Gamma_2Z\cap Y= Z$, we have $\gamma z_N =
 z_j$ for some $j<N$. But $z_N <_{\Gamma_2} \gamma z_N = z_j$, that
 should imply $N<j$. Contradiction. Thus $\Gamma_2 Z'\cap Y = Z'$. 

 Further $\Gamma_1 X'\cap Y = X'$. In fact, suppose there is
 $y\in X'$ and $\gamma\in \Gamma_1$ such that $\gamma y = z_N$. Since
 $\Gamma_1 X \cap Y = X$, we have $y\in Z$, that is $y = z_j$ for some
 $j\le N-1$. But then  $z_j=y <_{\Gamma_1} \gamma y = z_N $ and
 therefore $N<j$. Contradiction. 

 From Theorem~\ref{case2} we get
 \begin{enumerate}
  \item $C(X_N)e_NC(X_N) = C(Y)e_NC(Y)$ is a $2$-idempotent ideal;
  \item the algebra $C(X_{N-1})=C(X')$ has a basis
  $$
   J=\left\{ a_1a_2\otimes \chi_x | a_1\in J_1,\ a_2\in I_x; x,
   \deg(a_2)x,\deg(a_1a_2)x\in X_{N-1} \right\}.
  $$
\end{enumerate}
  For all $x\in X_{N-1}$ denote by ${I_x}'$ the set
  $$
  \left\{ a \middle| a \in I_x, \deg(a)x \ne z_N \right\}.
  $$
  Then
$$
J = \left\{ a_1a_2\otimes \chi_x \middle| a_1\in J_1, a_2\in {I_x}'; x,
\deg(a_1a_2)x\in X_{N-1}\right\}.
$$

We have $C_2(Y) e_N C_2(Y) = e_N C_2(Y)$. 
Therefore
$$
\left\{ a\chi_x \middle| a\in I_x; x,\deg(a)x\ne z_N \right\} =
\left\{ a\chi_x\middle| a\in {I_x}',\ x\in X_{N-1} \right\}.
$$
is a basis of 
 the algebra $C_2(X_{N-1})$.   
Denote by $Z''$ the set $\left\{ z_1,\dots, z_{N-1} \right\}$. 
Since $$\Gamma_2 Z''\cap X_{N-1} = \Gamma_2 Z'' \cap Y \cap
X_{N-1} = Z\cap X_{N-1} = Z'', $$
we can apply the claim of corollary to the set $X_{N-1} = X\coprod
Z''$ with $m=N-1$. 

Now, suppose that $\Rad(A_{\epsilon})=0$, then
$$
e_k \Rad(C(X_k))e_k  = \Rad(e_kC(X_k)e_k) = \Rad(e_kCe_k) =
\Rad(A_{\epsilon})=0,
$$
since $e_kCe_k = \chi_{z_k}C\chi_{z_k} \cong A_{\epsilon}$.
Now the claim follows from Corollary~\ref{2idempotent2strongly} and
Proposition~\ref{chaine}.
\end{proof}

\begin{theorem}
 \label{induction}
 Let $\Gamma$ be a \emph{commutative} positive monoid.
 Suppose that the $\Gamma$-graded algebra $A$ can be decomposed as the tensor product $A_1\otimes
 A_2 \otimes \dots A_m$, such that for all  $1\le i<j\le m$
$$
A_{ij} = A_i\otimes A_{i+1} \otimes\dots \otimes A_j
$$
is a $\Gamma$-graded subalgebra of $A$. 

Let 
$J_i$ be a $\Gamma$-homogeneous basis of $A_i$ for $1\le i \le m$,
and 
 $Y= \coprod_{i=1}^m Z_i$  a $\Gamma$-convex subset of $S$. Denote by $Y_{j}$ the set $Z_1\coprod\dots\coprod Z_j$ and by
$e_{j}$ the idempotent $\chi_{\bar{Y_{j}}}$.

If  there are submonoids $\Gamma_i$
of $\Gamma$ such that\\ 
\begin{enumerate}
 \item the subalgebra $A_i$ is $\Gamma_i$-graded, that is, for all
 $\gamma\in \Gamma\setminus  \Gamma_i$ the space $(A_i)_{\gamma}$ is
 zero;
 \item  $\Gamma_j Z_{i}\cap Y \subset
 \coprod\limits_{k=i}^{j} Z_{k}$ for $1\le i\le j\le m$;
 \item $\Gamma_i Y_j \cap Y = Y_j $ for $1\le i < j \le m$;
\end{enumerate}
then for each $1\le j\le
m-1$
the natural map 
$$
C(Y_{j+1})e_j\otimes_{e_{j}C(Y_{j+1})e_j}e_j C(Y_{j+1}) \to
C(Y_{j+1})e_{j} C(Y_{j+1})
$$
is an isomorphism of vector spaces. 

Moreover, for $1\le j \le m $ the set       
$$
\left\{ a_1 a_2 \dots a_m\chi_y\middle|
\begin{array}{l}
 a_k\in J_k,\ 1\le k\le m; y\in Y_j\\
 \deg(a_ka_{k+1}\dots a_m)y\in Y_j,\ j\le k\le m
\end{array}
\right\}
$$
is a basis of $C(Y_{j})$.
Suppose additionally that $\Rad(A_{\epsilon})=0$ and that on each
set $Z_j$, $j\ge 2$ there is an ordering $\le_j$, satisfying
\begin{enumerate}
 \item $z\le_j z'$ if $z\le_{\Gamma_i} z'$, for all $i\ge j$;
 \item $z\ge_j z'$ if $z\le_{\Gamma_i} z'$, for all $i< j$.
\end{enumerate}
Then the ideals $C(Y_{j})e_{j-1} C(Y_{j})$, for $j\ge 2$ are strong idempotent.

By Proposition~\ref{chaine} we have also that the ideal
$C(Y)\chi_{\bar{Z_1}}C(Y)$ of $C(Y)$ is strong idempotent.
\end{theorem}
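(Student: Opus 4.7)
My plan is to proceed by downward induction on $j$ from $j=m$ to $j=1$, at each step invoking Corollary~\ref{strange} for the two-factor decomposition $A = A_{1\ldots j}\otimes A_{j+1\ldots m}$. The first factor will play the role of $A_1$ in Theorem~\ref{case2}, graded by $\Gamma_1':=\Gamma_1\cdots\Gamma_j$, and the second the role of $A_2$, graded by $\Gamma_2':=\Gamma_{j+1}\cdots\Gamma_m$. The base case $j=m$ is immediate from Corollary~\ref{case1} applied to the tensor basis of $A_1\otimes\cdots\otimes A_m$, and the first isomorphism claim is vacuous at this level since it requires $j\le m-1$.

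The central step is to verify the two set-theoretic conditions of Theorem~\ref{case2} for this decomposition. For $\Gamma_2' Z_{j+1}\cap Y_{j+1}=Z_{j+1}$: given $\gamma=\gamma_{j+1}\cdots\gamma_m$ with $\gamma_l\in\Gamma_l$ and $z\in Z_{j+1}$ such that $\gamma z\in Y_{j+1}$, $\Gamma$-convexity of $Y$ forces every partial product $\gamma_l\gamma_{l+1}\cdots\gamma_m z$ to lie in $Y$, and hypothesis~(2) then places each such partial product inside $\coprod_{k=j+1}^{l}Z_k$; intersecting the final result with $Y_{j+1}$ isolates $Z_{j+1}$. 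For $\Gamma_1' Y_j\cap Y_{j+1}=Y_j$: taking the union over $i\le j$ in hypothesis~(2) already yields $\Gamma_j Y_j\cap Y=Y_j$, after which convexity together with hypothesis~(3) propagates through the remaining factors $\gamma_{j-1},\ldots,\gamma_1$ while staying inside $Y_j$ at every stage. With both conditions in hand, Theorem~\ref{case2} simultaneously produces the isomorphism $C(Y_{j+1})e_j\otimes_{e_j C(Y_{j+1})e_j}e_j C(Y_{j+1})\cong C(Y_{j+1})e_j C(Y_{j+1})$ and the basis of $C(Y_j)$; the auxiliary basis of $A_{j+1\ldots m}\ltimes_{\Gamma_2'} B$ on $Y_{j+1}$ needed to invoke Theorem~\ref{case2} is furnished by Corollary~\ref{case1}, and unfolding the binary tensor structure into the full $m$-fold decomposition converts the resulting basis into the claimed form.

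For the strong idempotence assertion under $\Rad(A_\epsilon)=0$, the ordering $\le_{j+1}$ postulated on $Z_{j+1}$ is precisely what Corollary~\ref{strange} requires relative to the pair $(\Gamma_1',\Gamma_2')$, so each ideal $C(Y_{j+1})e_j C(Y_{j+1})$ is strong idempotent. Concatenating across all $j$ and invoking Proposition~\ref{chaine} on the resulting chain of ideals in $C(Y)$ then yields the strong idempotence of $C(Y)\chi_{\bar{Z_1}}C(Y)$. The main obstacle I anticipate is the bookkeeping in verifying the two hypotheses of Theorem~\ref{case2}: the interaction of $\Gamma$-convexity with hypotheses~(2) and~(3) requires careful attention to which endpoint of each chain is already in $Y$ and to the order in which the factors $\gamma_l$ are peeled off; a secondary but nontrivial challenge is matching the binary basis produced by Theorem~\ref{case2} with the $m$-fold form claimed in the statement.
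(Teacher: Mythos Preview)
Your scheme—downward induction on $j$, at each step invoking Theorem~\ref{case2} for the two-block decomposition $A=A_{1,\ldots,j}\otimes A_{j+1,\ldots,m}$—is not the paper's organization (the paper inducts upward on $m$), and it has a genuine gap at the point you label secondary. You assert that Corollary~\ref{case1} furnishes the required basis of $(A_{j+1,\ldots,m}\ltimes_{\Gamma_2'}B)(Y_{j+1})$, but that corollary needs $Y_{j+1}$ to be $\Gamma_2'$-convex, and in general it is not: for $x\in Y_{j+1}$ and $\gamma\in\Gamma_2'$ with $\gamma x\in Y_{j+1}$, intermediate points $\gamma''x$ are forced by $\Gamma$-convexity only into $Y$, and hypothesis~(2) then places them in $\coprod_{k\ge j+1}Z_k$, not in $Y_{j+1}$. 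Without this basis Theorem~\ref{case2} cannot be invoked beyond the first step $j=m-1$ (where $Y_m=Y$ is genuinely convex). The paper obtains the auxiliary basis by a \emph{second} appeal to the induction hypothesis, applied to the tail algebra $A_{j+1,\ldots,m}$ with its own $(m-j)$-factor tensor decomposition and the partition $Y=Y_{j+1}\sqcup Z_{j+2}\sqcup\cdots\sqcup Z_m$ of the full convex set $Y$; the basis of $C_{j+1,\ldots,m}(Y_{j+1})$ drops out as the bottom level of that application. To repair your argument you would have to nest it inside an induction on the number of tensor factors, which is precisely the paper's structure.

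A smaller error: in checking $\Gamma_2'Z_{j+1}\cap Y_{j+1}=Z_{j+1}$ you peel off the factors beginning with $\gamma_m$, claiming $\gamma_l\gamma_{l+1}\cdots\gamma_m z\in\coprod_{k=j+1}^{l}Z_k$. This already fails at the second step, since if $\gamma_m z\in Z_m$ then hypothesis~(2) says nothing about $\Gamma_{m-1}Z_m$. The factors must be applied in \emph{increasing} order of index (first $\gamma_{j+1}$, then $\gamma_{j+2}$, and so on), so that each partial product lies in $\coprod_{k=j+1}^{l}Z_k$ with $l$ growing; only the final intersection with $Y_{j+1}$ isolates $Z_{j+1}$. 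This is exactly how the paper verifies $\Gamma_{2,N+1}Z_2\cap Y_2=Z_2$.
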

\begin{proof}
 We prove the theorem by induction on $n$. 
 The case $m=1$ is proved in Corollary~\ref{case1}.

 Suppose the claim of the theorem holds for all  $m\le N$. We then prove 
 it             for $m=N+1$. 

 Decompose $Y$ as the disjoint union of $N$ sets $Y_2$,
 $Z_3$,..., $Z_{N+1}$.
 Denote by $\Gamma_{12}$ the submonoid of $\Gamma$ generated by
 $\Gamma_1$ and $\Gamma_2$. Then $A_{12}$ is a $\Gamma_{12}$-graded
 algebra. We claim that the conditions of the theorem are satisfied
 for the same set $Y$ and the same algebra $A$ and
 \begin{enumerate} 
  \item $m=N$;
  \item $Z'_1= Y_2$, $Z'_2 = Z_3$, \dots, $Z'_N = Z_{N+1}$;
  \item $A'_{1} =A_{12} $, $A'_2= A_3$,\dots, $A'_N = A_{N+1}$;
  \item $\Gamma'_1 = \Gamma_{12}$, $\Gamma'_2= \Gamma_3$, \dots,
  $\Gamma'_N = \Gamma_{N+1}$.
 \end{enumerate}
 In fact
 \begin{align*}
 A'_{ij} = A'_i\otimes A'_{i+1}\otimes\dots \otimes A'_j
=\begin{cases}
 A_{1,j+1} & \mbox{if $i=1$}\\
 A_{i+1,j+1} & \mbox{if $i\ne 1$}
\end{cases}
\end{align*}
are subalgebras of $A$ by the hypothesis of the theorem. Now, for
$j\ge i$ and $i\ne 1$
$$
\Gamma'_j Z'_i \cap Y= \Gamma_{j+1} Z_{i+1}\cap Y \subset
\coprod_{k=i+1}^{j+1} Z_k = \coprod_{k=i}^j Z'_k,
$$
$$
\Gamma'_i Y'_j\cap Y = \Gamma_{i+1} Y_{j+1}\cap Y = Y_{j+1} = Y'_j.
$$
For $i=1$ and $j> 1$ we have $j+1\ge 2$ and therefore
\begin{align*}\Gamma'_jZ'_1 \cap Y&= \Gamma_{j+1} Y_2\cap Y=
 \Gamma_{j+1}(Z_1\sqcup
Z_2)\cap Y\\ & \subset
(\Gamma_{j+1}Z_1\cap Y) \cup (\Gamma_{j+1}Z_2 \cap Y)\\ &\subset
\coprod_{k=1}^{j+1}Z_k
\cup \coprod_{k=2}^{j+1}Z_k = \coprod_{k=1}^j Z'_k.\end{align*}
 Note, that $\Gamma'_1Z'_1\cap Y\subset Z'_1$ is equivalent to
 $\Gamma'_1Y'_1 \cap Y\subset Y'_1$. Thus it is only needed to check that for
 $i=1$ and $j\ge 1$ we have $\Gamma'_1 Y'_j \cap Y = Y'_j$. Or
 in other words, that $\Gamma_{12} Y_{j+1} \cap Y = Y_{j+1}$.
 Note, that $\Gamma_1 Y_{j+1} \cap Y = Y_{j+1}$ and $\Gamma_2
 Y_{j+1}\cap Y = Y_{j+1}$ by the condition of the theorem. Now, we
 use $\Gamma$-convexity of $Y$. Let $\gamma\in \Gamma_{12}$. Then $\gamma$ can
 be written as a product $\gamma_1\gamma_2$ with
 $\gamma_{1}\in \Gamma_1$ and $\gamma_2\in \Gamma_2$. Let $y\in Y_{j+1}$ be
 such that $\gamma y \in Y$. Then we have 
 $$
 y \le_{\Gamma} \gamma_2 y \le_{\Gamma} \gamma_1\gamma_2 y = \gamma y.
 $$
 Since both $y$ and $\gamma y$ are elements of $Y$ and $Y$ is
 $\Gamma$-convex, the element $ \gamma_2 y$ lies
 in $Y$. Now
 $$
 \gamma_2y \in  \Gamma_2Y_{j+1} \cap Y 
  = Y_{j+1}, 
$$
$$
\gamma y  = \gamma_1(\gamma_2 y) \in \Gamma_1 Y_{j+1} \cap
Y = Y_{j+1}.
$$
 Thus
$\Gamma_{12} Y_{j+1} = Y_{j+1}$.

 Therefore, from the induction assumption for $2\le j \le N$ the
 natural map
 $$
 C(Y_{j+1})e_j\otimes_{e_{j}C(Y_{j+1})e_j}e_j C(Y_{j+1}) \to
C(Y_{j+1})e_{j} C(Y_{j+1})
 $$
 is an isomorphism. If the additional ordering assumptions are satisfied for
sets $Z_2$, $Z_3$, \dots,$Z_{N+1}$ and monoids $\Gamma_2$,
$\Gamma_3$,\dots, $\Gamma_{N+1}$ then
 they are automatically satisfied for sets $Z'_2 = Z_3$, $Z'_3 = Z_4$,
 \dots,$Z'_N = Z_{N+1}$ and monoids $\Gamma'_2 = \Gamma_3$, $\Gamma'_3
 = \Gamma_4$, \dots, $\Gamma'_N= \Gamma_{N+1}$. 
 Therefore, in that case, the ideals $C(Y_{j})e_{j-1} C(Y_{j})$ for $3\le j\le N$ are strong
 idempotent. 
 
 Returning to the general case, we now explore the consequences of the
 induction hypothesis on  bases. Since $\left\{ a_1a_2\middle| a_1\in J_1,
 a_2\in J_2 \right\}$ is a homogeneous basis of $A_{12}$, the sets    
 $$
 \left\{ a_1 a_2 \dots a_{N+1}\chi_y\middle|
\begin{array}{l}
 a_k\in J_k,\ 1\le k\le N+1\\
 \deg(a_ka_{k+1}\dots a_{N+1})y\in Y_j,\ j\le k\le N+1\\
 y \in Y_j
\end{array}
\right\}
$$
are  bases of
 $C(Y_j)$  for $3\le j \le N+1$, 
and the set    
$$
\left\{ (a_1 a_2)a_3 \dots a_{N+1}\chi_y\middle|
\begin{array}{l}
 a_k\in J_k,\ 1\le k\le {N+1}\\
 \ y \in Y_2\\
 \deg(a_ka_{k+1}\dots a_{N+1})y\in Y_2,\ 3\le k\le {N+1}\\
 \deg( (a_1a_2)a_3\dots a_{N+1})y \in Y_2
\end{array}
\right\}
$$
is a basis of $C(Y_2)$.   

Let $(a_1a_2)a_3\dots a_{N+1} \chi_y$ be an element of the last set.
Denote by $z$ the element $\deg(a_3\dots a_{N+1})y$. Then $z\in Y_2$.
We have
$$
z < \deg(a_2)z < \deg(a_1)\deg(a_2)z = \deg(a_1a_2)z.
$$
Since $z$ and $\deg(a_1a_2)z$ are elements of $Y_2$, which is a
subset of $Y$, and $Y$ is $\Gamma$-convex, it follows that
$\deg(a_2)z$ is an element of $Y$. Now
$$
\deg(a_2)z \in \Gamma_2Y_2\cap Y =(\Gamma_2Z_1\cap Y)\cup
(\Gamma_2Z_2\cap Y) \subset (Z_1\sqcup Z_2) \sqcup Z_2 = Y_2.
$$
Therefore the above basis of $C(Y_2)$ can be written as
$$
\left\{ (a_1 a_2)a_3 \dots a_{N+1}\chi_y\middle|
\begin{array}{l} a_k\in J_k,\ 1\le k\le {N+1}\\
 \ y \in Y_2\\
 \deg(a_ka_{k+1}\dots a_{N+1})y\in Y_2,\ 1\le k\le {N+1}\\
\end{array}
\right\}.
$$

Now, the algebra $A_{2,N}$ with the decomposition 
 $A_2\otimes A_3\otimes \dots A_{N+1}$, submonoids $\Gamma_j$,
 $2\le j\le N+1$ and decomposition $Y_2\coprod Z_3\coprod\dots\coprod Z_{N+1}$ of
 $Y $ satisfy the conditions of the theorem for $m=N$. By the induction
 hypothesis we get that the set    
$$
\left\{  a_2a_3 \dots a_{N+1}\chi_y\middle|
\begin{array}{l}
 a_k\in J_k,\ 2\le k\le {N+1}\\
  y \in Y_2\\
 \deg(a_ka_{k+1}\dots a_{N+1})y\in Y_2,\ 2\le k\le {N+1}
\end{array}
\right\}
$$
is a basis of
 $C_{2,N+1}(Y_2) = (A_{2,N+1}\ltimes_{\Gamma} B)(Y_2)$.   

For each $y\in Y_2$ denote by ${I_y}$ the set
$$
\left\{ a_2a_3\dots a_{N+1} \middle| \begin{array}{l}
 a_k\in J_k,\ 2\le k\le {N+1}\\
 \deg(a_ka_{k+1}\dots a_{N+1})y\in Y_2,\ 2\le k\le {N+1}
\end{array} \right\}.
$$

Then the set    
$$
\left\{ a_1 \tilde{a}\chi_y\middle|
\begin{array}{l}
 a_1\in J_1;\tilde{a} \in I_y\\
  y \in Y_2;
 \deg(a_1\tilde{a})y\in Y_2\\
\end{array}
\right\}
$$
is a basis of $C(Y_2)$   and the set
$$
\left\{  \tilde{a}\chi_y\middle|
\begin{array}{l}
\tilde{a} \in I_y;
  y \in Y_2
\end{array}
\right\}
$$
is a basis of     $C_{2,N+1}(Y_2)$.

Denote by $\Gamma_{2,N+1}$ the submonoid of $\Gamma$ generated by
$\Gamma_2$, ..., $\Gamma_{N+1}$. Then $A_{2,N+1}$ is
$\Gamma_{2,N+1}$-graded algebra. We have
$$
\Gamma_1Z_1 \cap Y_2 \subset \Gamma_1Z_1\cap Y = Z_1.
$$
Next we show that
 $\Gamma_{2,N+1} Z_2 \cap Y_2 = Z_2$.
Let $\gamma\in \Gamma_{2,N+1}$ and $z\in Z_2$ be such that $\gamma z
\in Y_2$. Since $\Gamma$ is commutative, we can write $\gamma$ as a product $\gamma_{N+1}\dots
\gamma_2$,
where each $\gamma_s$ belongs to some $Z_s$. Then
we have
$$
z \le_{\Gamma} \gamma_2 z
\le_{\Gamma}\gamma_{3}\gamma_2z\le_{\Gamma} \dots
\le_{\Gamma}\gamma z.
$$
Since both elements $z$ and $\gamma z$ lie in $Y$ and $Y$ is
$\Gamma$-convex it follows, that element $\gamma_s\dots \gamma_2 z$
belongs to $Y$. 
Now
$$
\gamma_2 z \in \Gamma_{2} Z_2\cap Y = Z_2,
$$
$$
\gamma_3 \gamma_2 z \in \Gamma_3 Z_2 \cap Y \subset Z_2 \sqcup Z_3,
$$
$$
\gamma_4 \gamma_3\gamma_2 z \in \Gamma_4(Z_2\sqcup Z_3)\cap Y \subset
Z_2\sqcup Z_3 \sqcup Z_4.
$$
Proceeding this way, we get 
$$
\gamma_s\dots \gamma_2 z \in \coprod_{k=2}^s Z_k.
$$
In particular
$$
\gamma z \in \coprod_{k=2}^{N+1} Z_k.
$$
But, since $z\in Y_2 = Z_1\sqcup Z_2$, this means that $\gamma z \in
Z_2$.
Thus
$$
\Gamma_{2,N+1}Z_2 \cap Y_2 = Z_2.
$$
Therefore, we can apply Theorem~\ref{case2} to the decomposition
$A=A_1\otimes A_{2,N+1}$
of $A$ and $Y_2= Z_1\sqcup Z_2$. Note that $e_1= \chi_{Z_2}\in C(Y_2)$. Therefore
$$
C(Y_2)e_1\otimes_{e_1C(Y_2)e_1} C(Y_2) \to C(Y_2) e_1 C(Y_2)
$$
is an isomorphism. Moreover, if $\Rad(A_{\epsilon})=0$, and there is an ordering $\le_2$ on
$Z_2$ such that 
\begin{enumerate}
 \item $z\le_{\Gamma_i}z'\Rightarrow z\le_2 z'$, for all $i\ge 2$;
 \item $z\le_{\Gamma_1} z'\Rightarrow z\ge_2 z'$,
\end{enumerate}
then 
$$
z\le_{\Gamma_{2,N}} z'\Rightarrow z\le_2 z'.
$$
Hence        , we can apply Corollary~\ref{strange} and get that the ideal
$C(Y_2)e_1 C(Y_2)$ is strong idempotent. 

Now, returning to the general case, the set  
$$
\left\{ a_1\tilde{a}\chi_y\middle| a_1\in J_1;\tilde{a}\in I_y;
y,\deg(\tilde{a})y,\deg(a_1\tilde{a})y\in Y_1\right\}=
$$
$$
=
\left\{ a_1 a_2 \dots a_{N+1}\chi_y\middle|
\begin{array}{l}
 a_k\in J_k,\ 1\le k\le {N+1}\\
 y,\deg(a_2\dots a_{N+1})y, \deg(a_1\dots a_{N+1})y \in Y_1\\
 \deg(a_ka_{k+1}\dots a_{N+1})y\in Y_2,\ 3\le k\le {N+1}\\
\end{array}
\right\}
$$
 is a basis of    $C(Z_1)=C(Y_1)$. 
Let $a_1\dots a_{N+1}\chi_y$ be an element of   the last set. We know,
that 
$$
\deg(a_k\dots a_{N+1})y \in Y_2, \mbox{ for $3\le k\le N+1$.}
$$
Assume, that 
$$
\deg(a_k\dots a_{N+1})y \in Z_2,
$$
for some $k$.
Then 
$$
\deg(a_2\dots a_{N+1})y = \deg(a_2\dots a_{k-1})\deg(a_k\dots
a_{N+1})y \in Z_1\cap (\Gamma_{2,N+1}Z_2).
$$
But
$$
Z_1\cap \left( \Gamma_{2,N+1}Z_2 \right)
= Z_1\cap Y_2 \cap
(\Gamma_{2,N+1}Z_2) = Z_1 \cap Z_2 =\phi. 
$$
Thus, 
$$
\deg(a_k\dots a_{N+1})y\in Z_1, \mbox{ for all $k$}
$$ and the set
$$
\left\{ a_1 a_2 \dots a_{N+1}\chi_y\middle|
\begin{array}{l}
 a_k\in J_k,\ 1\le k\le {N+1}\\
 y \in Y_1\\
 \deg(a_ka_{k+1}\dots a_{N+1})y\in Y_1,\ 1\le k\le {N+1}\\
\end{array}
\right\}
$$
is a basis of the algebra $C(Y_1)$. \end{proof}
\section{Application to Schur algebras}
\label{application}
In this section we  apply the technique developed in the
previous section to the problem of constructing (minimal) projective
resolutions for  simple modules over the Borel subalgebra
$S^+(n,r)$ of the Schur algebra $S(n,r)$. We start with a short
overview of Schur algebras.  

\subsection{Results on combinatorics}
\label{combinatorics}
We shall use the following combinatorial notions.
\begin{definition}
A \emph{partition} $\lambda$  of  $r$ is a sequence
$\lambda=(\lambda_1,\lambda_2,\dots)$ of non-negative weakly decreasing  integers
$\lambda_1\ge\lambda_2\ge\dots \ge 0$ such that  $\sum\lambda_i=r$. The set of all
partitions of  $r$ is denoted by $\Lambda^+(r)$. The $\lambda_i$ are the \emph{parts}
of the  partition.
If $\lambda_{n+1} = \lambda_{n+2} = \cdots = 0$, we say $\lambda$ has
\emph{length} at most $n$.
The set of all partitions of length at most  $n$ is denoted by  $\Lambda^+(n,r)$.

Dropping the  condition that the $\lambda_i$ are decreasing, we say that
$\lambda$ is a \emph{composition} of $r$. The set of all compositions
of $r$ is denoted by $\Lambda(r)$.  The set of all compositions of $r$ of
length at most $n$ is denoted by  $\Lambda(n,r)$.
\end{definition}

There is a  natural ordering on the set $\Lambda(r)$:
\begin{definition} (Dominance order) For  $\lambda,\mu\in\Lambda(r)$, 
we say that $\lambda$ \emph{dominates}  $\mu$ and write $\lambda\dominate\mu$
if 
\[
\sum\limits_{i=1}^j\lambda_i\ge\sum\limits_{i=1}^j\mu_i
\]
for all $j$.
\end{definition}

Restricting, we get the dominance order on $\Lambda(n,r)$. 
Now, let $\Z^n$ be the $n$-dimensional lattice over the ring of integer
numbers. We can consider $\Lambda(n,r)$ as a subset $\Z^n$. We extend
the dominance order from $\Lambda(n,r)$ to $\Z^n$ by the following
definition

\begin{definition} (Dominance order) For  $z,\bar{z}\in\Z^n$, 
 we say that $\bar{z}$ \emph{dominates}  $z$ and write
 $\bar{z}\dominate z $
if 
\[
\sum\limits_{i=1}^j\bar{z}_i\ge\sum\limits_{i=1}^j z_i
\]
for all $j$.
\end{definition}
Let $\Psi_n$ be the submonoid    of $\Z^n$ generated by the vectors
$v_i-v_{i+1}$, where $\left\{ v_i\mid 1\le i\le n \right\}$ is the
standard basis of $\Z^n$.
Then $\Psi_n$ acts effectively on $\Z^n$ by bijections if we define
$\gamma z:= \gamma +z$. The partial order $\le_{\Psi_n}$ introduced in
Section~\ref{criterium}, now becomes 
$$
z\le_{\Psi_n} \bar{z} \mbox{ iff } \bar{z} -z \in \Psi_n.
$$

\begin{proposition}
 \label{dominance}
 The dominance order on $\Z^n$ coincides with $<_{\Psi_n}$. 
\end{proposition}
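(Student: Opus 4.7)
My plan is to prove both inclusions by directly characterising $\Psi_n$ as the set of integer vectors with non-negative partial sums (and total sum zero), and then observe that this is exactly the dominance condition.

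First I would do the easy direction: take an arbitrary $w = \sum_{i=1}^{n-1} c_i(v_i - v_{i+1}) \in \Psi_n$ with $c_i \in \N$, and compute the partial sum $W_j = \sum_{k=1}^j w_k$. For each generator $v_i - v_{i+1}$, its $j$-th partial sum is $1$ when $j=i$ and $0$ otherwise, so $W_j = c_j$ for $1 \le j \le n-1$ and $W_n = 0$. Thus every element of $\Psi_n$ has non-negative partial sums and vanishing total, hence if $\bar z - z \in \Psi_n$ we get $\bar z \dominate z$.

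For the converse, given $w \in \Z^n$ whose partial sums $W_j$ satisfy $W_j \ge 0$ for $1 \le j \le n-1$ and $W_n = 0$, I would define $c_j := W_j$ and verify by a one-line telescoping computation that $w = \sum_{i=1}^{n-1} c_i(v_i - v_{i+1})$: the $j$-th coordinate of the right-hand side is $c_j - c_{j-1} = W_j - W_{j-1} = w_j$ (using the conventions $c_0 = 0$ and, for $j = n$, $c_n := W_n = 0$). Since all $c_j \ge 0$, this exhibits $w$ as an element of $\Psi_n$. Applying this to $w = \bar z - z$, we get that $\bar z \dominate z$ implies $\bar z - z \in \Psi_n$, i.e.\ $z \le_{\Psi_n} \bar z$.

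There is no serious obstacle here; the entire content of the proposition is the explicit formula $c_j = \sum_{k=1}^j w_k$ inverting the expression of $w$ in the basis of simple root vectors. The only mild point to flag is the interaction with the total sum: elements of $\Psi_n$ automatically lie in the hyperplane $\sum w_i = 0$, so the dominance inequality at $j = n$ must in fact be an equality whenever $\bar z - z \in \Psi_n$, which is consistent with the way dominance is used in the sequel (comparing elements of the same weight, e.g.\ compositions in $\Lambda(n,r)$).
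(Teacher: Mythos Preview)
Your proof is correct and essentially identical to the paper's: both directions hinge on the same formula $c_j = k_j = \sum_{i=1}^j(\bar z_i - z_i)$ expressing $\bar z - z$ in the basis $\{v_i - v_{i+1}\}$, and the paper's argument is just your telescoping computation phrased directly in terms of $\bar z$ and $z$ rather than via an intermediate characterisation of $\Psi_n$. Your remark about the $j=n$ inequality needing to be an equality is apt; the paper silently assumes $k_n = 0$ in writing $\bar z - z = \sum_{j=1}^{n-1} k_j(v_j - v_{j+1})$, which is harmless since the proposition is only ever applied to elements with the same coordinate sum.
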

\begin{proof}
 It is clear that for any $z \in \Z^n$ and any $i$ between $1$ and
 $n-1$:
 $$
 z + v_i - v_{i+1} \dominate z.
 $$ 
 Thus $\bar{z}>_{\Psi_n} z$ implies $\bar{z} \dominate z$. 

 Let $z$ and $\bar{z}\in \Z^n$ be such that $\bar{z}\dominate z$.
 Denote by $k_j$ the difference
$$
\sum\limits_{i=1}^j\bar{z}_i - \sum\limits_{i=1}^j z_i.
$$
Then for all $j$ the number $k_j$ is positive. It easy to see, that 
$$
\bar{z} - z = \sum_{j=1}^{n-1} k_j(v_j - v_{j+1}) \in \Psi_n.
$$
Thus $\bar{z} >_{\Psi_n} z$. 
\end{proof}

It is clear that $(r,0,\dots,0)$ and $(0,\dots,0,r)$ are the maximal
and minimal elements of $\Lambda(n,r)$ with respect to the dominance
order, respectively.    We denote by $\Lambda^{1}(n,r)$ the smallest
$\Psi_n$-convex subset in $\Z^n$ containing $(r,0,\dots,0)$ and
$(0,\dots,0,r)$. Then $\Lambda^1(n,r)$ contains $\Lambda(n,r)$ but
does not coincide with it, since $z\in \Lambda^1(n,r)$ can have negative
coordinates. 
\begin{proposition}
For all $z\in \Lambda^1(n,r)$ we have $z_1\ge 0$ and 
$$
\sum_{i=1}^n z_i = r.
$$ 
\end{proposition}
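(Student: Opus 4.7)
The plan is to exhibit an explicit $\Psi_n$-convex subset of $\Z^n$ containing the two generators $(r,0,\dots,0)$ and $(0,\dots,0,r)$ that already enjoys the two required properties, and then use the minimality of $\Lambda^1(n,r)$ to conclude. Set
$$
X = \left\{z\in \Z^n \,\middle|\, (0,\dots,0,r)\le_{\Psi_n} z \le_{\Psi_n} (r,0,\dots,0)\right\}.
$$
First I would observe that $X$ is $\Psi_n$-convex, essentially by definition: if $a,b\in X$ and $a\le_{\Psi_n} z\le_{\Psi_n} b$, then by transitivity of $\le_{\Psi_n}$ the element $z$ also lies between $(0,\dots,0,r)$ and $(r,0,\dots,0)$. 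Next I would check that both generators belong to $X$; the only nontrivial point is that $(r,0,\dots,0)-(0,\dots,0,r)=r\sum_{i=1}^{n-1}(v_i-v_{i+1})\in\Psi_n$, so the two endpoints are indeed comparable in $\le_{\Psi_n}$. Minimality of $\Lambda^1(n,r)$ then yields $\Lambda^1(n,r)\subset X$.

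It remains to unpack what membership in $X$ means. Here I would invoke Proposition~\ref{dominance} to translate $\le_{\Psi_n}$ into the dominance order. The inequality $z\dominate(0,\dots,0,r)$ reads
$$
\sum_{i=1}^{j} z_i \ge 0 \quad\text{for } 1\le j\le n-1, \qquad \sum_{i=1}^{n} z_i \ge r,
$$
while $(r,0,\dots,0)\dominate z$ reads $\sum_{i=1}^{j} z_i \le r$ for all $j$. Taking $j=1$ in the first condition gives $z_1\ge 0$, and combining $\sum_{i=1}^n z_i\ge r$ with $\sum_{i=1}^n z_i\le r$ yields $\sum_{i=1}^n z_i = r$.

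There is no real obstacle here; the only subtle point is making sure the minimality argument is used correctly (i.e.\ we only need one inclusion $\Lambda^1(n,r)\subset X$, not equality) and that the translation between $\le_{\Psi_n}$ and $\dominate$ via Proposition~\ref{dominance} is applied both to the upper and lower endpoint. Everything else is straightforward bookkeeping with partial sums.
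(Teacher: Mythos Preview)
Your proof is correct and follows essentially the same approach as the paper: both arguments rest on the observation that every $z\in\Lambda^1(n,r)$ satisfies $(0,\dots,0,r)\le_{\Psi_n} z\le_{\Psi_n}(r,0,\dots,0)$, and then read off the partial-sum inequalities via Proposition~\ref{dominance}. The only difference is that you make the minimality step explicit (exhibiting the interval $X$ as a convex set containing both endpoints), whereas the paper simply asserts that $z$ dominates $(0,\dots,0,r)$ and is dominated by $(r,0,\dots,0)$ without spelling this out.
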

\begin{proof}
 Since $z$ dominates $(0,\dots,0,r)$ we have 
 $$
 \sum_{i=1}^j z_i \ge 0
 $$
 for all $j\in \left\{ 1,2,\dots, n-1 \right\}$. In particular,
 $z_1\ge 0$. Moreover,
 $$
 \sum_{i=1}^n  z_i \ge r.
 $$
 Since $z$ is dominated by $(r,0,\dots, 0)$ we have
 $$
 \sum_{i=1}^n z_i \le r.
 $$
 Thus the claim of proposition is proved.
\end{proof}

We denote by $\Lambda^k(n,r)$ the subset of $\Lambda^1(n,r)$ 
of all $z$ such that $z_i\ge 0$ for all $i \in \left\{ 1,2,\dots,
k
\right\}$. Then $\Lambda^l(n,r) \subset \Lambda^k(n,r)$ if $l\ge k$
and $\Lambda^n(n,r) = \Lambda(n,r)$. 
Denote by $M^k(n,r)$ the difference $\Lambda^{k-1}(n,r) \setminus
\Lambda^{k}(n,r)$. Then
$$
M^k(n,r) =\left\{ (z_1,\dots,z_n) \middle| z_1\ge 0,\dots,z_{k-1}\ge
0,z_k<0,\,z\in \Lambda^1(n,r) \right\}.
$$
Denote by $\Psi_{n,k}$ the submonoid of $\Psi_{n}$ generated by the
elements $v_i-v_k$ for all $i\in \left\{ 1,2,\dots,k-1 \right\}$.
Note, that $\Psi_{n,k} \cap \Psi_{n,l} = \left\{ 0 \right\}$, if
$k\ne l$. 
Further, let
$$
\Phi_{n,k} = \Psi_{n,2} + \Psi_{n,3} + \dots + \Psi_{n,k}
$$
and 
$$
\Theta_{n,k} = \Psi_{n,k+1} + \Psi_{n,k+2} + \dots + \Psi_{n,n}.
$$
Note, that $\Phi_{n,k} \cap \Theta_{n,k} = \{0\}$ for all $k\in \left\{
2,3,\dots,n \right\}$.

Next we will
introduce an  order~$\le_k$ on $M^k(n,r)$, satisfying 
\begin{enumerate}
 \item $z_1\le_{\Phi_{n,k}} z_2 \Rightarrow z_1\le_k z_2$;
 \item $z_1\le_{\Theta_{n,k}} z_2 \Rightarrow z_1 \ge_k z_2$
\end{enumerate}
for $z_1$, $z_2\in M^k(n,r)$. 
For this let $\le_{lex}$ denote the lexicographic order on $\Z^n$.
Define the map
\begin{align*}
 \phi^k\colon \Z^n & \to \Z^{n+1}\\
 (z_1,z_2,\dots,z_n) & \mapsto \left( \sum_{i\ne k}
 z_i,\sum_{i=k+1}^nz_i,z_1,z_2,\dots,z_{k-1},z_n,z_{n-1},\dots,z_{k+1} \right).
\end{align*}
Define the order $\le_k$ on $M^k(n,r)$ by
$$
z\le_k z'\equiv \phi^k(z)\le_{lex} \phi^k(z').
$$

\begin{proposition}
 \label{order}
 The order $\le_k$ satisfies the above stated properties:
\begin{enumerate}
 \item $z_1\le_{\Phi_{n,k}} z_2 \Rightarrow z_1\le_k z_2$;
 \item $z_1\le_{\Theta_{n,k}} z_2 \Rightarrow z_1 \ge_k z_2$.
\end{enumerate}
\end{proposition}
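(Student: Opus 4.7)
The plan is to verify both properties by writing $g = z_2 - z_1$ as a non-negative integer combination of the generators of the relevant submonoid and then comparing $\phi^k(z_1)$ and $\phi^k(z_2)$ lexicographically coordinate by coordinate. Writing $g = \sum \alpha_{ij}(v_i - v_j)$ with $\alpha_{ij} \in \N$, each $g_l = (z_2)_l - (z_1)_l$ becomes an explicit signed sum of the $\alpha_{ij}$, so every position of $\phi^k(z_2) - \phi^k(z_1)$ is a linear expression in these non-negative integers whose sign can be read off directly.

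For property (i), the generators of $\Phi_{n,k}$ have the form $v_i - v_j$ with $i < j \le k$, so $g_l = 0$ for $l > k$; in particular the second component of $\phi^k$ is unchanged. The first component difference equals $-g_k = \sum_{i < k}\alpha_{ik} \ge 0$, with equality exactly when no generator has $j = k$. I would then proceed by induction through the positions $l = 1, 2, \dots, k-1$ (occupying positions $3, \dots, k+1$ of $\phi^k$): assuming every earlier position of $\phi^k(z_2)-\phi^k(z_1)$ vanishes, a bookkeeping argument forces all $\alpha_{ij}$ with $i < l$ and all $\alpha_{ik}$ to be zero, so $g_l$ reduces to $\sum_{l < j \le k-1}\alpha_{lj} \ge 0$, with equality iff all generators having $i = l$ also vanish. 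Either some $g_l$ is strictly positive, giving $\phi^k(z_1) <_{lex}\phi^k(z_2)$, or the induction exhausts all positions and $g = 0$; in either case $z_1 \le_k z_2$.

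For property (ii), the generators of $\Theta_{n,k}$ are $v_i - v_j$ with $i < j$ and $j > k$. Here the first component difference equals $-g_k = -\sum_{j > k}\alpha_{kj} \le 0$, and, using $\sum_l g_l = 0$, the second component difference simplifies to $-\sum_{i \le k,\, j > k}\alpha_{ij} \le 0$. If both vanish, then all $\alpha_{ij}$ with $i \le k$ must be zero, leaving only generators with $k < i < j$; this forces $g_l = 0$ for $l < k$, so positions $3, \dots, k+1$ of $\phi^k$ also agree. The remaining positions $z_n, z_{n-1}, \dots, z_{k+1}$ are then compared in this reversed order by a symmetric induction with $l$ decreasing from $n$ to $k+1$: under the inductive hypothesis all generators with $j > l$ have already been forced to vanish, so $g_l = -\sum_{k < i < l}\alpha_{il} \le 0$, and the first strict inequality yields $\phi^k(z_2) <_{lex}\phi^k(z_1)$, i.e.\ $z_1 \ge_k z_2$.

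No step is genuinely hard; the main difficulty is purely the bookkeeping needed to track which $\alpha_{ij}$ are successively forced to vanish once the lex comparison has produced equality through a prefix of positions, and then verifying that the remaining generators still force the correct sign at the next position. The specific layout of $\phi^k$ --- leading with $r - z_k$ and $\sum_{i > k}z_i$, then forward-listing the coordinates below $k$ and reverse-listing those above $k$ --- is tailored precisely so that the forward induction works for property (i) and the backward induction on the reversed tail works for property (ii).
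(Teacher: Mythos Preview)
Your argument is correct, but it is considerably more laborious than necessary, and the paper's proof exploits a structural shortcut you do not use. The key point you are missing is that $\phi^k$ is a \emph{linear} map and the lexicographic order on $\Z^{n+1}$ is compatible with addition (that is, $a\ge_{lex}0$ and $b\ge_{lex}0$ imply $a+b\ge_{lex}0$). Hence it is enough to check $\phi^k(v_i-v_j)\ge_{lex}0$ for each generator $v_i-v_j$ of $\Phi_{n,k}$, and $\phi^k(v_i-v_j)\le_{lex}0$ for each generator of $\Theta_{n,k}$. The paper simply evaluates $\phi^k(v_i-v_j)$ in the handful of cases ($j<k$, $j=k$; and $i<k<j$, $k<i<j$, $i=k<j$) and reads off the sign of the first nonzero coordinate in one line each.

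Your approach instead pushes the full sum $g=\sum\alpha_{ij}(v_i-v_j)$ through $\phi^k$ and then runs an induction through the coordinates, at each step using the vanishing of earlier coordinates to kill off batches of the $\alpha_{ij}$. This is sound --- your bookkeeping of which $\alpha_{ij}$ are forced to zero at each stage is accurate in both parts --- but all of that induction is doing by hand exactly what the compatibility of $\le_{lex}$ with addition gives for free. The paper's route is shorter and makes transparent \emph{why} the particular layout of $\phi^k$ was chosen: each generator lands with its first nonzero coordinate having the desired sign.
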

\begin{proof}
 Note, that $\Phi_{n,k}$ is generated by the vectors $v_i - v_j$, with
 $i<j\le k$. To prove the first property it is enough to check that
 for all $z\in M^k(n,r)$
 $$
 \phi^k(z)\le_{lex} \phi^k(z+v_i-v_j), \mbox{ if $i<j\le k$.}
 $$
 But $\phi^k$ is a linear map and $\le_{lex}$ is compatible with
 addition. Thus, it is enough to check that $\phi^k(v_i-v_j)
 \ge_{lex} 0$. For $j<k$ we have
 $$
 \phi^k(v_i-v_j) = (0,0,0,\dots,0,1,0,\dots,0,-1,0,\dots,0)
 \ge_{lex} 0.
 $$
 Further
 $$
 \phi^k(v_i-v_k) = (1,\dots) \ge_{lex} 0.
 $$
 The monoid $\Theta_{n,k}$ is generated by the vectors $v_i-v_j$ with
 $i<j$ and $k+1\le j\le n$. To prove the second part of the
 proposition it is enough to check that 
 $$
 \phi^k(v_i - v_j) \le_{lex} 0.
 $$
 For $i<k<j$ we have
 $$
 \phi^k(v_i-v_j) = (0,-1,\dots)\le_{lex} 0.
 $$
 For $k<i<j$ we get
 $$
 \phi^k(v_i-v_j) = (0,0,0,\dots,0,0,\dots,-1,\dots)\le_{lex} 0.
 $$
 Finally
 $$
 \phi^k(v_k-v_j) = (-1,\dots) \le_{lex} 0.
 $$
\end{proof}

Denote by $I(n,r)$ the subset of $\N^r$ consisting from the elements
$i= (i_1,i_2,\dots, i_r)$, such that $i_k \in \left\{ 1,2,\dots, n
\right\}$ for all $k$ between $1$ and $r$. 
\begin{definition}
We write $i\le j$ for $i,\,j\in I(n,r)$ if $i_\sigma\le j_\sigma$ for all $\sigma$,
$1\le \sigma\le r$.
\end{definition}
Denote by $\Sigma_r$ the permutation group on $\left\{ 1,2,\dots,r
\right\}$. The group $\Sigma_r$ acts on
 $I(n,r)$  by the rule
\[ i\pi=(i_{\pi(1)},\dots,i_{\pi(r)}) \qquad (i \in I,\,\pi \in
\Sigma_r).\]
Then we can extend the action of $\Sigma_r$ on $I(n,r)^2$ by 
$$
(i,j)\pi = (i\pi,j\pi). 
$$
We denote by $\Omega(n,r)$ the set 
$$
\left.\raisebox{0.5ex}{$\left\{ (i,j)\in I(n,r)\times I(n,r)\,\middle|\, i\le j
\right\}$}\!\middle/\raisebox{-0.5ex}{$\Sigma_r$}\right..
$$
\begin{definition} We say that a composition $\lambda=(\lambda_1,\dots,\lambda_n)$ is the \emph{weight} of
$i\in I(n,r)$, written  $\lambda = \wt(i)$, if
\[ \lambda_\nu=\left|\left\{ \rho\in\left\{ 1,2,\dots,r
\right\}\middle|i_\rho=\nu\right\}\right| \]
for all  $\nu\in\left\{ 1,2,\dots,n \right\}$.
\end{definition}
It is clear that if $i\le j$, then $\wt(i)\dominate \wt(j)$. For
$\lambda$,$\mu\in \Lambda(n,r)$ such that $\lambda\dominate\mu$ let 
$$
\Omega(\lambda,\mu) = \left.\raisebox{0.5ex}{$\left\{ (i,j)\in
I(n,r)\times I(n,r)\,\middle|\, i\le j,\,
\wt(i) = \lambda,\,\wt(j)= \mu
\right\}$}\!\middle/\raisebox{-0.5ex}{$\Sigma_r$}\right..
$$

We denote by $T(n,r)$ the set of upper-triangular $n\times n$-matrices
over $\N$ such that the sum of all its entries is $r$. Let 
$$
T(\lambda,\mu) = \left\{ K=(k_{\sigma\rho})_{\sigma,\rho=1}^n \middle|
\sum_{{\rho}={\sigma}}^n k_{{\sigma}{\rho}}  = \lambda_{\sigma},\ 
\sum_{{\sigma}=1}^{\rho} k_{{\sigma}{\rho}} = \mu_{\rho},\
{\sigma,\rho}\in \left\{ 1,2,\dots,n \right\}\right\}.
$$
\begin{proposition}
 \label{correlation}
For $i$,$j\in I(n,r) $ such that $i\le j$ and $\sigma$,$\rho\in
\left\{ 1,2,\dots,n \right\}$, set
$$
t(i,j)_{\sigma,\rho} =\#\left\{ \tau\in\left\{ 1,2,\dots,r
\right\}\middle| i_{\tau}=\sigma,\,j_{\tau}=\rho \right\}.
$$
Then the map
\begin{align*}
 t\colon  \left\{ (i,j)\in I(n,r)\times I(n,r)\middle|\, i\le
 j,\,\wt(i)=\lambda,\,\wt(j)=\mu \right\} &\to T(\lambda,\mu) \\
 (i,j)&\mapsto \left(t(i,j)_{\sigma\rho}\right)_{\sigma,\rho=1}^n
\end{align*} 
induces a bijection between $\Omega(\lambda,\mu)$ and
$T(\lambda,\mu)$.
\end{proposition}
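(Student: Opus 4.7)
The plan is to verify that $t$ is well-defined into $T(\lambda,\mu)$, that it is constant on $\Sigma_r$-orbits, and that the induced map on $\Omega(\lambda,\mu)$ is both injective and surjective. In essence, $t(i,j)$ encodes the multiset $\{(i_\tau,j_\tau) : 1\le \tau\le r\}$ of column pairs, and this multiset is a complete invariant of the $\Sigma_r$-orbit of $(i,j)$.

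First I would check that $t$ lands in $T(\lambda,\mu)$. Since $i\le j$ entails $i_\tau\le j_\tau$ for every $\tau$, only entries $t(i,j)_{\sigma,\rho}$ with $\sigma\le\rho$ can be nonzero, so the matrix is upper triangular. The row sum $\sum_{\rho=\sigma}^n t(i,j)_{\sigma,\rho}$ counts the number of $\tau$ with $i_\tau=\sigma$, which is $\wt(i)_\sigma=\lambda_\sigma$; similarly the column sums give $\mu_\rho$. For $\Sigma_r$-invariance, observe that $(i\pi,j\pi)$ merely reindexes the sequence of pairs $(i_\tau,j_\tau)$ by $\pi$, leaving the counts $t(i,j)_{\sigma,\rho}$ unchanged. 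Hence $t$ descends to a map $\bar t\colon \Omega(\lambda,\mu)\to T(\lambda,\mu)$.

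For surjectivity, given $K=(k_{\sigma\rho})\in T(\lambda,\mu)$, I would construct a representative pair $(i,j)$ as follows: enumerate the pairs $(\sigma,\rho)$ with $\sigma\le\rho$ in some fixed order, and for each such pair produce $k_{\sigma\rho}$ consecutive positions $\tau$ setting $i_\tau=\sigma$ and $j_\tau=\rho$. The row-sum conditions on $K$ force $\wt(i)=\lambda$ and the column-sum conditions force $\wt(j)=\mu$; by construction $i\le j$ and $t(i,j)=K$. For injectivity, suppose $t(i,j)=t(i',j')$. Then the two multisets of pairs $\{(i_\tau,j_\tau)\}_{\tau}$ and $\{(i'_\tau,j'_\tau)\}_{\tau}$ coincide as multisets in $\{1,\dots,n\}^2$, so there exists a bijection $\pi\in\Sigma_r$ matching them position by position, giving $(i,j)\pi=(i',j')$; thus the two pairs represent the same class in $\Omega(\lambda,\mu)$.

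Nothing here is really hard; the only point requiring a moment of care is the injectivity step, where one has to note that knowledge of the joint distribution of the coordinates is exactly what determines a sequence up to simultaneous permutation of the positions, which is precisely the $\Sigma_r$-action defining $\Omega(\lambda,\mu)$.
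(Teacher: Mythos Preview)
Your proof is correct and follows essentially the same approach as the paper: both verify that $t$ lands in $T(\lambda,\mu)$ via the row/column sum computation, descend to $\Omega(\lambda,\mu)$ by $\Sigma_r$-invariance, prove surjectivity by an explicit construction of a preimage, and prove injectivity by observing that any two pairs with the same $t$-value differ by a permutation. The paper writes down the explicit representative $i=(1^{\lambda_1},\dots,n^{\lambda_n})$, $j=(1^{k_{11}},2^{k_{12}},\dots,n^{k_{nn}})$, whereas you describe the construction procedurally, but this is a cosmetic difference only.
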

\begin{proof}
 Since $i\le j$, we get that for $\sigma>\rho$ the number
 $t(i,j)_{\sigma,\rho}=0$. Moreover
 \begin{align*}
  \sum_{{\rho}={\sigma}}^n t(i,j)_{{\sigma}{\rho}} &= 
  \sum_{{\rho}={\sigma}}^n  \#\left\{ \tau\in\left\{ 1,2,\dots,r
  \right\}\middle| i_{\tau}=\sigma,\,j_{\tau}=\rho \right\}\\
 &=\#\left\{ \tau\in\left\{ 1,2,\dots,r
  \right\}\middle| i_{\tau}=\sigma \right\}\\
 &= \wt(i)_{\sigma} = \lambda_{\sigma}
 \end{align*}
and 
\begin{align*}
 \sum_{\sigma=1}^{\rho}t(i,j)_{{\sigma}{\rho}} &=
\sum_{\sigma=1}^{\rho} \#\left\{ \tau\in\left\{ 1,2,\dots,r
  \right\}\middle| i_{\tau}=\sigma,\,j_{\tau}=\rho \right\}\\
 &= \#\left\{ \tau\in\left\{ 1,2,\dots,r
  \right\}\middle|\,j_{\tau}=\rho \right\}\\
 &=\wt(j)_{\rho} = \mu_{\rho}.
\end{align*} 
Thus the image of $t$ lies in $T(\lambda,\mu)$. It is clear that
$t$ is $\Sigma_r$-invariant. Thus $t$ induces a map from
$\Omega(\lambda,\mu)$ to $T(\lambda,\mu)$. For $K=(k_{\sigma,\rho})\in
T(\lambda,\mu)$ we set
$$
i = (1^{\lambda_1},2^{\lambda_2},\dots,n^{\lambda_n})
$$
and 
$$
j =
(1^{k_{11}},2^{k_{12}},\dots,n^{k_{1n}},2^{k_{22}},\dots,n^{k_{nn}}).
$$
Then $i\le j$ and $t(i,j)_{\sigma,\rho}= k_{\sigma,\rho}$. Thus
$t$ is surjective. Moreover, each pair $\left( i',j' \right)$ of
multi-indices, such that $t(i',j')=K$, can be transformed to
$(i,j)$ with the appropriate element of $\Sigma_r$. Thus $t$ is injective.
\end{proof}
\begin{corollary}
 There is a bijection between $\Omega(n,r)$ and $T(n,r)$. 
\end{corollary}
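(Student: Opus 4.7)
The plan is to reduce the assertion to Proposition~\ref{correlation} by decomposing both sides as disjoint unions indexed by pairs $(\lambda,\mu)\in\Lambda(n,r)\times\Lambda(n,r)$ with $\lambda\dominate\mu$, and then gluing the bijections $\Omega(\lambda,\mu)\to T(\lambda,\mu)$ provided by that proposition.

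First I would establish the decomposition on the left. The map $i\mapsto\wt(i)$ from $I(n,r)$ to $\Lambda(n,r)$ is manifestly invariant under the $\Sigma_r$-action, so the map $(i,j)\mapsto(\wt(i),\wt(j))$ descends to $\Omega(n,r)$. As noted just before Proposition~\ref{correlation}, $i\le j$ implies $\wt(i)\dominate\wt(j)$. Hence every element of $\Omega(n,r)$ lies in $\Omega(\lambda,\mu)$ for the unique pair $(\lambda,\mu)=(\wt(i),\wt(j))$ attached to any representative, and these subsets are pairwise disjoint:
$$
\Omega(n,r) \;=\; \bigsqcup_{\lambda\dominate\mu}\Omega(\lambda,\mu).
$$

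Next I would do the corresponding decomposition on the right. Given $K=(k_{\sigma\rho})\in T(n,r)$, set $\lambda_\sigma=\sum_{\rho\ge\sigma}k_{\sigma\rho}$ and $\mu_\rho=\sum_{\sigma\le\rho}k_{\sigma\rho}$; these are compositions in $\Lambda(n,r)$ since both sum to $r$. The upper-triangularity of $K$ gives the telescoping identity
$$
\sum_{i=1}^{j}\lambda_i-\sum_{i=1}^{j}\mu_i \;=\; \sum_{\sigma\le j<\rho}k_{\sigma\rho} \;\ge\; 0,
$$
so $\lambda\dominate\mu$ and $K\in T(\lambda,\mu)$. Thus $T(n,r)=\bigsqcup_{\lambda\dominate\mu}T(\lambda,\mu)$, again a disjoint union.

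Finally, Proposition~\ref{correlation} supplies for each pair $\lambda\dominate\mu$ in $\Lambda(n,r)$ a bijection $\Omega(\lambda,\mu)\to T(\lambda,\mu)$, and assembling these over all such pairs produces a bijection $\Omega(n,r)\to T(n,r)$. There is no real obstacle: the only computational point is the verification that upper-triangular matrices in $T(n,r)$ automatically have row sums dominating column sums, which is exactly the telescoping identity above.
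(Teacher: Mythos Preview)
Your proof is correct and follows exactly the paper's approach: decompose both $\Omega(n,r)$ and $T(n,r)$ as disjoint unions over pairs $\lambda\dominate\mu$ and apply Proposition~\ref{correlation} piecewise. You supply more detail than the paper (in particular the telescoping check that row sums dominate column sums), but the argument is the same.
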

\begin{proof}
 In fact, $\Omega(n,r) =
 \coprod_{\lambda\dominate\mu}\Omega(\lambda,\mu)$ and $T(n,r) =
 \coprod_{\lambda\dominate\mu}T(\lambda,\mu)$. By
 Proposition~\ref{correlation} there is a bijection between sets
 $\Omega(\lambda,\mu)$ and $T(\lambda,\mu)$ for all
 $\lambda\dominate\mu$.
\end{proof}
\subsection{The Schur algebra $S(n,r)$ and the Borel subalgebra
$S^+(n,r)$}
\label{schur}
In this section we follow~\cite{green,mt,s}.

Let $\K$ be an infinite field (of any characteristic) and  $V$ the 
natural module over $\GL_n(\K)$ with basis $\{v_1,\dots,v_n\}$. Then
there is a diagonal action of  $\GL_n(\K)$ on the $r$-fold tensor
product $V^{\otimes r}$. With respect to the basis
{$\{v_i=v_{i_1}\otimes\cdots\otimes v_{i_r}\colon i\in I(n,r)\}$}, this
action is given by the formula
\[gv_i=gv_{i_1}\otimes\dots\otimes gv_{i_r}.\]
We denote by $\tau_{n,r}\colon\GL_n(\K)= \GL(V)\to \End_{\K}(V^{\otimes r})$ the corresponding
representation of the group $\GL_n({\K})= \GL(V)$.
\begin{definition}[{\cite{green}}]
The Schur algebra
$S_{\K}(n,r)$ is the linear closure of the group
$\{\tau_{n,r}(g):g\in \GL_n({\K})\}$.
\end{definition}
Let us denote by $B^+_n({\K})$ the subgroup of $\GL_n(\K)$ consisting
of the upper triangular matrices.

\begin{definition}[{\cite{green2}}] The upper Borel subalgebra
 $S^+_{\K}(n,r)$ of the
 Schur algebra $S_{\K}(n,r)$ is the linear
 closure of the group $\left\{\tau_{n,r}(g) \middle| g\in B^+_n({\K})\right\}$.
\end{definition}

We denote by
$ e_{i,j} $ the linear transformation of $V^{\otimes r}$ whose matrix,
relative to the basis $\left\{\, v_i: i\in I(n,r) \,\right\}$ of
$V^{\otimes r}$, has $1$ in place $(i,j)$ and zeros elsewhere.

Define
$$
\xi_{i,j} = \sum_{\pi\in\Sigma_r} e_{i\pi,j\pi}.
$$

\begin{proposition}[{\cite[Thm.\ 2.2.6]{mt}}]
 The set 
 \[
 \left\{\xi_{i,j}\,\middle|\,\overline{(i,j)}\in \left(\raisebox{0.5ex}{$I(n,r)\times
 I(n,r)$}\!\middle/\raisebox{-0.5ex}{$\Sigma_r$}\right)\right\}
\]
is a $\K$-basis of $S(n,r)$.
\end{proposition}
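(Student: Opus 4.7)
The plan is to compute the matrix coefficients of $\tau_{n,r}(g)$ in the $\{e_{i,j}\}$-basis of $\End_{\K}(V^{\otimes r})$ and show that (a) they are $\Sigma_r$-invariant functions of $(i,j)$, forcing $S(n,r)$ to lie in $\mathrm{span}\{\xi_{i,j}\}$, and (b) using that $\K$ is infinite, the matrix coefficients as polynomial functions of $g$ are rich enough to recover each $\xi_{i,j}$.

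First I would work out the action explicitly. For $g=(g_{\alpha\beta})\in\GL_n(\K)$ and $j\in I(n,r)$,
\[
\tau_{n,r}(g)v_j = gv_{j_1}\otimes\cdots\otimes gv_{j_r} = \sum_{i\in I(n,r)} g_{i_1 j_1}g_{i_2 j_2}\cdots g_{i_r j_r}\,v_i,
\]
so the $(i,j)$-entry of $\tau_{n,r}(g)$ in the basis $\{e_{i,j}\}$ is the monomial $g(i,j) := \prod_{\sigma=1}^r g_{i_\sigma j_\sigma}$. The crucial observation is that $g(i\pi,j\pi)=g(i,j)$ for every $\pi\in\Sigma_r$, since the product is just reindexed. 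Therefore
\[
\tau_{n,r}(g) = \sum_{\overline{(i,j)}} g(i,j)\,\xi_{i,j},
\]
with the sum taken over orbit representatives, where the factor compensating for the size of the orbit is absorbed into the definition of $\xi_{i,j}=\sum_{\pi}e_{i\pi,j\pi}$. This shows $S(n,r)\subseteq \mathrm{span}_{\K}\{\xi_{i,j}\mid \overline{(i,j)}\in\Omega\}$, where $\Omega=(I(n,r)\times I(n,r))/\Sigma_r$.

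Next I would prove linear independence of the $\xi_{i,j}$. Distinct orbits $\overline{(i,j)}$ give pairwise disjoint supports when each $\xi_{i,j}$ is expanded in the $\{e_{k,l}\}$-basis of $\End_{\K}(V^{\otimes r})$, because the $\Sigma_r$-action on $I(n,r)\times I(n,r)$ partitions the index set. Since the $\{e_{k,l}\}$ themselves are $\K$-linearly independent, so are the $\xi_{i,j}$ for distinct orbit representatives.

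Finally, I would verify the reverse inclusion $\mathrm{span}\{\xi_{i,j}\}\subseteq S(n,r)$. Consider the matrix of coordinates: for varying $g\in\GL_n(\K)$, the expansion above expresses $\tau_{n,r}(g)$ via the monomial functions $g\mapsto g(i,j)$, one for each orbit in $\Omega$. These functions are $\K$-linearly independent as polynomial functions on the open set $\GL_n(\K)\subset \mathrm{Mat}_n(\K)$: the monomials $\prod g_{\alpha\beta}^{m_{\alpha\beta}}$ are independent in the coordinate ring $\K[g_{\alpha\beta}]$, and the bijection between $\Omega$ and $T(n,r)$ from Proposition~\ref{correlation} shows that distinct orbits yield distinct multi-exponents $(m_{\alpha\beta})=(t(i,j)_{\alpha\beta})$. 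Since $\K$ is infinite, polynomial independence implies that the evaluations $(g(i,j))_{\overline{(i,j)}\in\Omega}$ attained as $g$ ranges over $\GL_n(\K)$ span $\K^{|\Omega|}$. Hence each $\xi_{i,j}$ is a $\K$-linear combination of various $\tau_{n,r}(g)$, so lies in $S(n,r)$.

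The main subtlety is the last step: one needs the infinite-field hypothesis to pass from the abstract linear independence of monomials in $\K[g_{\alpha\beta}]$ to the assertion that their evaluations at $\GL_n(\K)$-points (an open dense subset of $\mathrm{Mat}_n(\K)$) still span the corresponding coordinate space. This is standard but is precisely where the hypothesis on $\K$ enters, and is the key input that upgrades the containment in $\mathrm{span}\{\xi_{i,j}\}$ to equality.
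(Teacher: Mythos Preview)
The paper does not prove this proposition; it merely cites \cite[Thm.~2.2.6]{mt}. Your overall strategy is the standard one (essentially Green's) and is sound: compute the coefficients of $\tau_{n,r}(g)$, observe their $\Sigma_r$-invariance, and then use the infiniteness of $\K$ to separate the resulting monomials.

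There is, however, a genuine slip in your displayed identity. With the paper's literal definition $\xi_{i,j}=\sum_{\pi\in\Sigma_r}e_{i\pi,j\pi}$, each distinct pair $(k,l)$ in the orbit is repeated $s_{i,j}:=\lvert\mathrm{Stab}_{\Sigma_r}(i,j)\rvert$ times, so the correct relation is
\[
\tau_{n,r}(g)=\sum_{\overline{(i,j)}}\frac{g(i,j)}{s_{i,j}}\,\xi_{i,j},
\]
and the stabilizer factor is \emph{not} ``absorbed''. In positive characteristic this division may be illegal, and with the paper's definition some $\xi_{i,j}$ actually vanish (already for $n=1$, $r=p=\mathrm{char}(\K)$ one gets $\xi=p!\cdot e=0$), so the proposition as literally stated fails. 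The intended reading is Green's convention, in which $\xi_{i,j}$ is the sum over the \emph{distinct} elements of the $\Sigma_r$-orbit of $(i,j)$; with that convention your formula $\tau_{n,r}(g)=\sum g(i,j)\,\xi_{i,j}$ holds on the nose and the remainder of your argument is correct. A minor further point: Proposition~\ref{correlation} as written covers only pairs with $i\le j$; you need its obvious extension to arbitrary pairs (drop the upper-triangular constraint on $t(i,j)$), which is equally elementary.
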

The next statement was proved in~\cite[\S\S 3, 6]{green2}.
\begin{proposition}
\label{c1}
	 The algebra $S^+_{\K}(n,r)$ has a ${\K}$-basis $\left\{
	 \xi_{i,j}: \overline{(i,j)}\in \Omega(n,r) \right\}$.
\end{proposition}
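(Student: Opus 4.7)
The plan is to show both inclusions: that $S^+_\K(n,r)$ is contained in the span of $\{\xi_{i,j} : \overline{(i,j)}\in\Omega(n,r)\}$, and that every such $\xi_{i,j}$ lies in $S^+_\K(n,r)$. Linear independence of the proposed basis is automatic, since by the preceding proposition the larger set $\{\xi_{i,j} : \overline{(i,j)}\in I(n,r)^2/\Sigma_r\}$ is a basis of $S(n,r)$.

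For the first (easier) inclusion, I would compute $\tau_{n,r}(g)$ in the tensor basis: for any $g\in\GL_n(\K)$,
\[
\tau_{n,r}(g) = \sum_{i,j\in I(n,r)}\Bigl(\prod_{k=1}^{r} g_{i_k,j_k}\Bigr)\,e_{i,j}.
\]
If $g\in B^+_n(\K)$, then $g_{a,b}=0$ whenever $a>b$, so only pairs with $i\le j$ (in the sense that $i_k\le j_k$ for every $k$) contribute. Grouping the sum into $\Sigma_r$-orbits and using that $\prod_k g_{i_k,j_k}$ is $\Sigma_r$-invariant in the pair $(i,j)$, one obtains
\[
\tau_{n,r}(g) = \sum_{\overline{(i,j)}\in\Omega(n,r)} \frac{f_{\overline{(i,j)}}(g)}{|\mathrm{Stab}_{\Sigma_r}(i,j)|}\,\xi_{i,j},
\qquad f_{\overline{(i,j)}}(g):=\prod_{k=1}^{r} g_{i_k,j_k}.
\]
Hence $S^+_\K(n,r)\subseteq\mathrm{span}\{\xi_{i,j} : \overline{(i,j)}\in\Omega(n,r)\}$.

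For the reverse inclusion, I would show that $\dim S^+_\K(n,r)=|\Omega(n,r)|$ by a density argument. The functions $g\mapsto f_{\overline{(i,j)}}(g)$, indexed by the orbits $\overline{(i,j)}\in\Omega(n,r)$, are pairwise distinct monomials of degree $r$ in the upper-triangular entries $\{g_{a,b}\}_{a\le b}$: two such monomials coincide precisely when the multisets $\{(i_k,j_k)\}_k$ agree, which is exactly the $\Sigma_r$-orbit relation. Since $\K$ is infinite, distinct monomials are linearly independent as functions on $B^+_n(\K)$. Therefore, for any nonzero linear functional on $\K^{\Omega(n,r)}$ the corresponding linear combination of the $f_{\overline{(i,j)}}$ does not vanish identically on $B^+_n(\K)$, which forces the linear span of $\{\tau_{n,r}(g) : g\in B^+_n(\K)\}$ to have dimension $|\Omega(n,r)|$.

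Combining the two steps, $S^+_\K(n,r)$ is a subspace of the $|\Omega(n,r)|$-dimensional space $\mathrm{span}\{\xi_{i,j} : \overline{(i,j)}\in\Omega(n,r)\}$ of the same dimension, so the inclusion is an equality and the stated set is a basis. The only real obstacle is keeping the bookkeeping around $\Sigma_r$-stabilizers and orbit representatives correct; the representation-theoretic content reduces to the elementary fact that distinct monomials in the entries of an upper-triangular matrix are linearly independent over an infinite field.
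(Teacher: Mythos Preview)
The paper does not give its own proof of this proposition; it simply cites Green~\cite[\S\S 3, 6]{green2}. Your argument is essentially the standard one and is correct in outline: express $\tau_{n,r}(g)$ in the $\xi_{i,j}$ basis, observe that upper-triangularity of $g$ kills all terms with $i\not\le j$, and then use that distinct degree-$r$ monomials in the entries $g_{ab}$ ($a\le b$) are linearly independent as functions on $B^+_n(\K)$ because $\K$ is infinite.

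One bookkeeping point worth cleaning up: with the standard definition of $\xi_{i,j}$ as the sum of $e_{i',j'}$ over \emph{distinct} pairs $(i',j')$ in the $\Sigma_r$-orbit of $(i,j)$, the expansion is simply
\[
\tau_{n,r}(g)=\sum_{\overline{(i,j)}\in\Omega(n,r)} f_{\overline{(i,j)}}(g)\,\xi_{i,j},
\]
with no division by $|\mathrm{Stab}_{\Sigma_r}(i,j)|$. Your formula with the stabilizer in the denominator would be problematic in positive characteristic (and indeed the paper's displayed definition $\xi_{i,j}=\sum_{\pi\in\Sigma_r}e_{i\pi,j\pi}$, if read literally, is already problematic for the same reason). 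Once you use the orbit-sum normalisation, the coefficient functions are precisely the distinct monomials $\prod_k g_{i_k,j_k}$, and your independence and dimension-count argument goes through cleanly.
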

\subsection{Universal enveloping algebra of $sl_n^+$ and Kostant form}
\label{kostant}
Denote by $sl_n^+$ the lie algebra of upper triangular nilpotent
matrices.
Let $\A_n(\C)$ be its universal enveloping algebra over $\C$. 
 We shall consider $sl^{+}_n$ with the standard basis $\left\{
e_{ij}\mid 1\le i<j\le n\right\}$. Then $\A_n(\C)$ is generated as an algebra by the
elements $e_{1,2},e_{2,3},\dots, e_{n-1,n} $. 

We order the elements $e_{ij}$ in such way, that $e_{ij}\le
e_{i'j'}$ if and only if $$(j,i)\ge_{lex} (j',i').$$ 
In other words, 
$$
e_{12} > e_{13} > e_{23} > \dots > e_{1k}>e_{2k}> \dots > e_{k-1,k} >
\dots > e_{n-1,n}.
$$
We always
assume that in the product $\prod\limits_{i<j}e_{ij}^{k_{ij}}$ the
generators increase from the left to right, with respect to the above
order. For
example, if $n=3$ and 
$$
(k_{ij})_{i,j=1}=
\left(
\begin{array}{ccc}
 0 & 1 & 1 \\
 0 & 0 & 2 \\
 0 & 0 & 0 
\end{array}
\right)
$$
then 
$$
\prod_{i<j} e_{ij}^{k_{ij}} = e_{23}^2 e_{13} e_{12}.
$$
It follows from the Poincare-Birkhoff-Witt Theorem, that 
the set 
$$
\mathbb{B}_n =\left\{ \prod_{1\le i<j\le n} e_{ij}^{k_{ij}}\middle| k_{ij}\in \N
\right\}
$$
is a $\C$-basis of $\A_n(\C)$.
 Denote by $e_{ij}^{(k)}$
the element $\frac{1}{k!}e_{ij}^{k}$ of the algebra $\A_n(\C)$.
We define $\A_n(\Z)$ to be the $Z$-sublattice of $\A_n(\C)$ generated by
the set 
$$
\overline{\mathbb{B}}_n =\left\{ \prod_{i<j} e_{ij}^{(k_{ij})}\middle| k_{ij}\in \N
\right\}.
$$
\begin{proposition}
 The set $\A_n(\Z)$ is a subring of $\A_n(\C)$. In other words,
 $\A_n(\Z)$  is a $\Z$-algebra. It is called the \emph{Kostant form}
 of the universal enveloping algebra $\A_n(\C)$ over $\Z$. 
\end{proposition}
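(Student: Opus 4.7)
Since $\A_n(\Z)$ is defined as a $\Z$-submodule of $\A_n(\C)$ and the empty product lies in $\overline{\mathbb{B}}_n$, it suffices to show that the product of any two basis monomials $u,v\in\overline{\mathbb{B}}_n$ can be written as a $\Z$-linear combination of elements of $\overline{\mathbb{B}}_n$. The central tool will be a Kostant-style commutation formula for divided powers, together with the elementary identity $e_{ij}^{(a)}e_{ij}^{(b)}=\binom{a+b}{a}e_{ij}^{(a+b)}$ (whose coefficient is an integer) for merging equal generators.

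First I would analyse the commutator structure. From $[e_{ab},e_{cd}]=\delta_{bc}e_{ad}-\delta_{ad}e_{cb}$, for any two positive root vectors $X=e_{ij}$ and $Y=e_{kl}$ either $[X,Y]=0$, or $[X,Y]=\epsilon Z$ with $\epsilon\in\{\pm 1\}$ and $Z=e_{mn}$ a third positive root vector. A direct check in the two non-trivial cases ($j=k$ giving $Z=e_{il}$; $l=i$ giving $Z=e_{kj}$) shows $[X,Z]=[Y,Z]=0$, so $X,Y,Z$ generate a Heisenberg-type subalgebra. Comparing the pairs $(j,i)$ lexicographically in both cases gives $Y<Z<X$ in the chosen order whenever $X>Y$; this strict interlacing will be crucial for termination.

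Second, I would prove the divided-power commutation formula
\[
X^{(a)}Y^{(b)} \;=\; \sum_{t=0}^{\min(a,b)}\epsilon^{t}\,Y^{(b-t)}Z^{(t)}X^{(a-t)},
\]
valid whenever $[X,Y]=\epsilon Z$ with $Z$ commuting with both $X$ and $Y$. The proof is induction on $a$: the base $a=0$ is trivial, and the step uses $(a+1)X^{(a+1)}=X\cdot X^{(a)}$ together with the Pascal rule $\binom{a}{t-1}+\binom{a}{t}=\binom{a+1}{t}$, the centrality of $Z$ in the subalgebra absorbing all the rewriting. All coefficients equal $\pm 1$ and are therefore integral.

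Third, apply these two identities to straighten $uv$. The product $uv$ already lists every generator; the only possible failure of normal form is at junctions where a larger generator stands immediately to the left of a smaller one. Apply the swap formula at such a junction; merge equal generators by the binomial identity; repeat. Each step produces integer coefficients, so the only remaining point is termination. The main obstacle is precisely this: each application of the swap inserts a new factor $Z^{(t)}$, which could in principle create fresh out-of-order adjacencies elsewhere. To overcome it, I would use the fact established in Step~1 that $Y<Z<X$: the inserted $Z$ always lies strictly between the two swapped generators in the order $\le$. Consequently the appropriate well-founded measure is a lexicographic pair consisting of the multiset of generator-labels weighted by their position in the order $\le$, together with the number of out-of-order adjacencies; each swap either decreases the inversion count with the multiset unchanged, or replaces the pair $(X,Y)$ by an ordered triple $(Y,Z,X)$ in which both $Y$ and $Z$ are strictly smaller than $X$ in $\le$. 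Combined with the obvious termination of merging equal factors, this shows the rewriting terminates and expresses $uv$ as a $\Z$-linear combination of elements of $\overline{\mathbb{B}}_n$, completing the proof.
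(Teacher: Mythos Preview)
The paper does not prove this statement itself; it simply defers to Kostant's original article. Your sketch is essentially the classical straightening argument behind that result, specialised to type~$A$: the divided-power commutation identity you state is correct, and the interlacing $Y<Z<X$ holds as claimed. (In fact, for the paper's ordering the case $l=i$ never occurs when $X>Y$, so the only relevant non-commuting case is $j=k$ and $\epsilon=+1$ throughout.)

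The one place that deserves a sharper formulation is termination. Your description of the measure is a bit loose; a clean version is the lexicographic pair
\[
\bigl(\text{multiset of generator occurrences},\ \text{total number of inversions}\bigr),
\]
where multisets are compared via the Dershowitz--Manna extension of the generator order. For the $t=0$ summand the multiset is unchanged and the total inversion count drops by $ab\ge 1$; for each $t>0$ summand one removes $t$ copies of $X$ and $t$ copies of $Y$ and inserts $t$ copies of $Z$, and since $Z<X$ the multiset strictly decreases in the Dershowitz--Manna order. Merging adjacent equal factors preserves both components, so it can be postponed until swapping has terminated. With this made precise your argument is complete.
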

\begin{proof}
 For a proof see~\cite[Lemma~2 after Proposition~3]{kostant}
 and~\cite[Remark~3]{kostant} thereafter. 
\end{proof} 

\begin{definition}
 For any field $\K$, the algebra $\A_n(\K) := \K\otimes_{\Z} \A_n(\Z)$ is called
 \emph{Kostant form} of the algebra $\A_n(\C)$ over 
 $\K$.
\end{definition}
Define a degree function on $\overline{\mathbb{B}}_n$, by 
\begin{align*}
 \deg\colon  \overline{\mathbb{B}}_n & \to \Psi_n\\
 \prod_{i<j} e_{ij}^{(k_{ij})} &\mapsto \sum_{i<j}
 k_{ij}(v_i-v_j).
\end{align*}
This makes $\A_n(\K)$ into a $\Psi_n$-graded algebra. 
We define subset $\overline{\mathbb{B}}_{n,k}$ of
$\overline{\mathbb{B}}_n$ by 
$$
\overline{\mathbb{B}}_{n,k} := \left\{ \prod_{i<k}
e_{ik}^{(l_i)}\middle|
l\in \N_0\right\}.
$$
\begin{remark}
 \label{tensorproductdecomposition}
Let $\A_{n,k}(\K)$ be the $\K$-vector subspace of $\A_n(\K)$
generated by $\overline{\mathbb{B}}_{n,k}$. 
Then
 $$
 \A_n(\K) \cong \A_{n,n}(\K) \otimes_{\K}
 \A_{n,n-1}(\K)\otimes_{\K} \dots \otimes_{\K} \A_{n,2}(\K)
 $$
 as $\K$-vector spaces,
 since 
 $$
 \overline{\mathbb{B}}_n = \overline{\mathbb{B}}_{n,n}\times
 \overline{\mathbb{B}}_{n,n-1}\times \dots \times
 \overline{\mathbb{B}}_{n,2}.
 $$
\end{remark}
Note that each $\A_{n,k}(\K)$ is graded over $\Psi_{n,k}$, since
$\deg(\overline{\mathbb{B}}_{n,k})\subset \Psi_{n,k}$. 
For $l>k$ denote by $\A_{n,l,k}(\K)$ the tensor product
$$
\A_{n,l}(\K)\otimes_{\K}\A_{n,l-1}(\K)\otimes_{\K}\dots
\otimes_{\K} \A_{n,k}(\K).
$$
It is a subspace of $\A_n(\K)$ with the basis
$$
\overline{\mathbb{B}}_{n,l}\times
 \overline{\mathbb{B}}_{n,l-1}\times \dots \times
 \overline{\mathbb{B}}_{n,k}.
$$
\begin{proposition}
 \label{subalgebra}
 For all $l>k$, the space $\A_{n,l,k}(\K)$ is a subalgebra of $\A_n(\K)$.
\end{proposition}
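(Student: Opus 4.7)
My plan is to identify $\A_{n,l,k}(\K)$ as the Kostant form of the enveloping algebra of a Lie subalgebra of $sl_n^+$, and read off the claim from the PBW theorem and Kostant's construction.

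Let $L_{l,k}$ denote the $\K$-linear span of $\{e_{ij}\mid i<j,\ k\le j\le l\}$ inside $sl_n^+$. The first step is to verify that $L_{l,k}$ is a Lie subalgebra. Using the standard formula
$$
[e_{ij},e_{pq}]=\delta_{jp}e_{iq}-\delta_{iq}e_{pj},
$$
any nonzero summand on the right inherits its second index from the set $\{j,q\}$, which by hypothesis lies in $[k,l]$. Hence $L_{l,k}$ is closed under the bracket.

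Next, I apply the Poincar\'e--Birkhoff--Witt theorem to $L_{l,k}$, ordering its basis by the restriction of the order on the generators $e_{ij}$ introduced in Section~\ref{kostant}. Under this restricted order, the ordered PBW monomials in the $e_{ij}\in L_{l,k}$ are exactly the products $\prod_{s=l}^{k}\prod_{i<s}e_{is}^{k_{is}}$, i.e.\ the products $\mathbb{B}_{n,l}\cdot\mathbb{B}_{n,l-1}\cdots\mathbb{B}_{n,k}$. This shows that $U_\C(L_{l,k})$ is a subalgebra of $U_\C(sl_n^+)=\A_n(\C)$ with this set of monomials as a $\C$-basis.

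The main step is to transfer this from $\C$ to the Kostant $\Z$-form. Using exactly the argument of Kostant~\cite{kostant} that produces $\A_n(\Z)$ as a $\Z$-subring with basis $\overline{\mathbb{B}}_n$, applied verbatim inside the subalgebra $U_\C(L_{l,k})$, one shows that the $\Z$-subring of $\A_n(\C)$ generated by the divided powers $\{e_{ij}^{(m)}\mid k\le j\le l,\ i<j,\ m\ge 1\}$ has $\Z$-basis $\overline{\mathbb{B}}_{n,l}\times\overline{\mathbb{B}}_{n,l-1}\times\dots\times\overline{\mathbb{B}}_{n,k}$. Consequently this $\Z$-subring coincides, as a $\Z$-submodule of $\A_n(\Z)$, with $\A_{n,l,k}(\Z)$, and tensoring with $\K$ gives the subalgebra claim.

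The potentially tricky point is the last step: one needs to be sure that when a product of two ordered monomials in the divided powers $e_{ij}^{(m)}$ with $k\le j\le l$ is brought into PBW normal form, the result is still a $\Z$-linear combination of such monomials (as opposed to requiring $e_{ab}^{(c)}$ with $a$ or $b$ outside $[k,l]$). The commutation relations between divided powers of two root vectors $e_{ij}$ and $e_{pq}$ in $\A_n(\Z)$ involve only divided powers $e_{**}^{(*)}$ whose indices come from those already appearing, together with $e_{iq}$ (when $j=p$) or $e_{pj}$ (when $i=q$); in both cases the second index belongs to $\{j,q\}\subset[k,l]$. Hence the normalization procedure stays inside the prescribed basis, which is the essential content of the proposition. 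This is the step I would work out most carefully, either by quoting Kostant's argument in this setting or by an explicit induction on degree in $\Psi_n$ using the standard divided-power commutation identity.
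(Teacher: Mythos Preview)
Your argument is correct and follows the same overall strategy as the paper: identify the Lie subalgebra $\mathfrak{g}_{l,k}=\C\langle e_{ij}\mid i<j,\ k\le j\le l\rangle$, check via the bracket formula that it is closed, and use PBW to see that $\mathfrak{U}(\mathfrak{g}_{l,k})$ sits inside $\A_n(\C)$ with the expected monomial basis.

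The one place where the paper is more economical is the passage from $\C$ to the integral Kostant form. You propose either to rerun Kostant's argument inside $U_\C(L_{l,k})$ or to track the divided-power commutation relations by hand and verify that the straightening stays inside the prescribed set. The paper bypasses both: it simply observes that
\[
\A_{n,l,k}(\Z)=\mathfrak{U}(\mathfrak{g}_{l,k})\cap \A_n(\Z),
\]
an equality that is immediate once you note that $\overline{\mathbb{B}}_n$ is a $\Z$-basis of $\A_n(\Z)$ and a $\C$-basis of $\A_n(\C)$, so intersecting with the $\C$-span of the monomials having second index in $[k,l]$ picks out exactly the $\Z$-span of those same divided-power monomials. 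Since an intersection of subrings is a subring, $\A_{n,l,k}(\Z)$ is a subring of $\A_n(\Z)$ with no further work, and tensoring with $\K$ finishes. Your route gives the same conclusion and has the merit of being self-contained, but the intersection trick avoids any appeal to the details of Kostant's proof or to explicit commutation identities.
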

\begin{proof}
 Let $\frak{g}$ be a Lie subalgebra of $sl_n^+$. Then, from
 Poincare-Birkhoff-Witt Theorem, it follows, that
 $\frak{U}(\frak{g})$ is a subalgebra of $\A_n(\C)$. Note, that
 $$
 \frak{g}_{l,k} = \C\left\langle e_{ij} \middle| i<j,\, k\le j\le l \right\rangle
 $$
 is a Lie subalgebra of $sl_n^+$, since
 $$
 [e_{ij},e_{i'j'}] =
 \begin{cases}
  e_{ij'} & j=i'\\
 -e_{i'j} & i=j'\\
 0 &\mbox{otherwise.}
 \end{cases}
 $$
 
 Thus $\frak{U}(\frak{g}_{l,k})$ is a
 subalgebra of $\A_n(\C)$. But now, it follows from the definition
 that
 $$
 \A_{n,l,k}(\Z) = \frak{U}(\frak{g}_{l,k})\cap \A_n(\Z)
 $$
 and, therefore, $\A_{n,l,k}(\Z)$ is a subalgebra of $\A_n(\Z)$. Since
 the property ``to be subalgebra'' ``commutes'' with the functor
 $\K\otimes_{\Z}-$, it follows, that $\A_{n,l,k}(\K)$ is a subalgebra of
 $\A_n(\K)$. 
\end{proof}

\subsection{Main results}
\label{main}
Define $B_n$ to be the algebra of all $\K$-valued functions on
$\Z^n$. Then $\Psi_n$ acts on $B_n$ by shifts
$$
f^{\gamma}(z):= f(z-\gamma).
$$

We consider $\Lambda(n,r)$ as a subset of $\Z^n$. Denote by $\chi$ the
indicator function of $\Lambda(n,r)$, that is
$$
\chi(\gamma)=
\begin{cases}
 1, &\gamma\in \Lambda(n,r)\\
 0, & \gamma\notin \Lambda(n,r).
\end{cases}
$$

Then $\chi$ is an idempotent in the
algebra $B$, and therefore $e = 1_{\A_n(\K)}\otimes \chi$ is an idempotent in the algebra
$\Cf_n(\K):= \A_n(\K)\ltimes_{\Psi_n} B$. Denote by $\bar{e}$ the idempotent
$1-e$ in $\A_n(\K)\ltimes_{\Gamma} B$. 
\begin{theorem}
 \label{main1}
 The ideal $\Cf_n(\K)\bar{e}\,\Cf_n(\K)$ of $\Cf_n(\K)$ is strong
 idempotent and the set     
 $$
 \mathbb{J} =  \left\{ \prod_{i<j}e_{ij}^{(k_{ij})}\chi_{\mu} \middle|
 \lambda,\mu\in \Lambda(n,r),\,\lambda\dominate\mu,\, (k_{ij})_{i,j=1}^n \in T(\lambda,\mu) \right\}
 $$
  is a basis of 
$$
 \Cf_n(\Lambda(n,r)) =
 \left.\raisebox{0.5ex}{$\Cf_n(\K)$}\!\middle/\raisebox{-0.5ex}{$\Cf_n(\K)\bar{e}\,\Cf_n(\K)$}\right.
 $$
 
\end{theorem}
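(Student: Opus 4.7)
The plan is to apply Theorem~\ref{induction} with a carefully chosen stratification. Set $S = \Z^n$, $\Gamma = \Psi_n$, and take $Y = \Lambda^1(n,r)$, which is $\Psi_n$-convex by construction. Stratify $Y = Z_1 \sqcup Z_2 \sqcup \cdots \sqcup Z_n$ with $Z_1 = \Lambda(n,r)$ and $Z_j = M^{n+2-j}(n,r)$ for $2 \le j \le n$. Invoking Remark~\ref{tensorproductdecomposition} and Proposition~\ref{subalgebra}, I would write $\A_n(\K) = A_1 \otimes A_2 \otimes \cdots \otimes A_n$ with $A_1 = \K$ as a trivial first factor and $A_j = \A_{n,n+2-j}(\K)$ for $j \ge 2$; each tensor sub-product $A_{ij}$ is then of the form $\A_{n,l,k}(\K)$ (or $\K\otimes\A_{n,l,k}(\K)$), hence a subalgebra by Proposition~\ref{subalgebra}. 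Assign the grading monoids $\Gamma_1 = \{\epsilon\}$ and $\Gamma_j = \Psi_{n,n+2-j}$ for $j \ge 2$.

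I would then verify the combinatorial hypotheses of Theorem~\ref{induction}. A generator $v_p - v_{n+2-j}$ of $\Gamma_j$ (with $p < n+2-j$) increases coordinate $p$ and decreases coordinate $n+2-j$. For $z \in Z_i = M^{n+2-i}$ with $i \le j$, the first negative coordinate of $\gamma z$ is either $n+2-j$ (if $\gamma$ pushed it below zero) or still $n+2-i$, so $\gamma z \in Z_j \sqcup Z_i \subset \coprod_{k=i}^j Z_k$; the case $Z_1 = \Lambda(n,r)$ is analogous. The condition $\Gamma_i Y_j \cap Y = Y_j$ for $i<j$ holds because $\Gamma_i$ decreases only the $(n+2-i)$-th coordinate, and the inequality $n+2-i > n+1-j$ places this coordinate outside the positions that define $Y_j = \Lambda^{n+1-j}(n,r)$. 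For the ordering requirement, let $\le_j$ on $Z_j$ be the order $\le_{n+2-j}$ of Proposition~\ref{order}: since $\sum_{i \ge j}\Gamma_i \subset \Phi_{n,n+2-j}$ and $\sum_{i < j}\Gamma_i \subset \Theta_{n,n+2-j}$, the required compatibility is exactly what Proposition~\ref{order} delivers. Since $\Rad(A_\epsilon) = \Rad(\K) = 0$, Theorem~\ref{induction} applies and yields both strong idempotency of $\Ker(\phi^{\Lambda^1(n,r)}_{\Lambda(n,r)})$ in $C(\Lambda^1(n,r))$ and an explicit basis of $C(Y_1) = C(\Lambda(n,r))$ consisting of products $a_1 a_2 \cdots a_n \chi_\mu$ (with $a_1 = 1$) subject to $\deg(a_k a_{k+1}\cdots a_n)\mu \in \Lambda(n,r)$ for every $k$.

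To identify this basis with $\mathbb{J}$, I would reinterpret each product $\prod_{k=2}^n a_k$ as a strict upper-triangular matrix $(k_{ij})_{i<j}$ of divided-power exponents $e_{ij}^{(k_{ij})}$. Setting $k_{ll} := \mu_l - \sum_{i<l} k_{il}$, a short calculation shows that the path condition at step $k$ reduces (modulo inequalities that are automatic from $k_{ij} \ge 0$) to the inequality $k_{n+2-k,\,n+2-k} \ge 0$; together with the step-$k=2$ requirement $\lambda := \mu + \sum_{i<j} k_{ij}(v_i - v_j) \in \Lambda(n,r)$ and the entry positivity $k_{ij} \ge 0$, this is precisely the condition $(k_{\sigma\rho})_{\sigma,\rho=1}^n \in T(\lambda,\mu)$ under the bijection of Proposition~\ref{correlation}. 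Since $\lambda - \mu \in \Psi_n$ implies $\lambda \dominate \mu$ by Proposition~\ref{dominance}, the parametrization matches $\mathbb{J}$ exactly.

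For the final step of promoting strong idempotency from $C(\Lambda^1(n,r))$ to $\Cf_n(\K)$, apply Proposition~\ref{chaine} to the chain
\begin{equation*}
0 \;\subset\; \Cf_n(\K)\chi_{\Z^n\setminus\Lambda^1(n,r)}\Cf_n(\K) \;\subset\; \Cf_n(\K)\bar{e}\,\Cf_n(\K).
\end{equation*}
Strong idempotency of the lower layer follows from the $\Psi_n$-convexity of $\Lambda^1(n,r)$ combined with Proposition~\ref{split}, which guarantees that the associated restriction functor $\left(\phi^{\Z^n}_{\Lambda^1(n,r)}\right)_*$ is exact and maps (minimal) projective resolutions to (minimal) projective resolutions---equivalent to the $\Tor$-vanishing condition defining strong idempotency. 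Strong idempotency of the upper quotient is the outcome of Theorem~\ref{induction} recorded above. The main technical obstacle is the careful book-keeping needed to verify hypotheses (2) and (3) of Theorem~\ref{induction} under the chosen (shifted) indexing, together with the conversion of the iterated path conditions on $a_1 a_2 \cdots a_n \chi_\mu$ into the matrix formulation of $T(\lambda,\mu)$.
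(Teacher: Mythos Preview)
Your proof is correct and follows essentially the same approach as the paper: apply Theorem~\ref{induction} with $Y=\Lambda^1(n,r)$, the tensor factorisation of $\A_n(\K)$ from Remark~\ref{tensorproductdecomposition} and Proposition~\ref{subalgebra}, the strata $M^k(n,r)$, and the orderings of Proposition~\ref{order}, then translate the resulting path conditions into the matrix description $T(\lambda,\mu)$. The only cosmetic differences are that you insert a trivial first factor $A_1=\K$ and an empty stratum $Z_2=M^n(n,r)$ (shifting all indices by one relative to the paper's choice $m=n-1$, $A_j=\A_{n,n+1-j}(\K)$, $Z_j=M^{n+1-j}(n,r)$), and that you make explicit the passage from $C(\Lambda^1(n,r))$ to $\Cf_n(\K)=C(\Z^n)$ via Proposition~\ref{split} and Proposition~\ref{chaine}, a step the paper leaves implicit in the proof and only discusses in the introduction.
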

\begin{proof}
 We will         apply Theorem~\ref{induction} to the situation
 \begin{enumerate}
  \item $A = \A_n(\K)$;
  \item $m = n-1$;
  \item $A_1 = \A_{n,n}(\K)$, $A_2 = \A_{n,n-1}(\K)$, \dots,
  $A_{n-1} = \A_{n,1(\K)}$;
  \item $J_1 = \overline{\mathbb{B}}_{n,n}$, $J_2 =
  \overline{\mathbb{B}}_{n,n-1}$, \dots,
  $J_{n-1} = \overline{\mathbb{B}}_{n,1}$;
  \item $\Gamma_1 = \Psi_{n,n}$, $\Gamma_2 = \Psi_{n,n-1}$, \dots,
  $\Gamma_{n-1} = \Psi_{n,1}$;
  \item $Y = \Lambda^1(n,r)$;
  \item $Z_1 = \Lambda(n,r)$, $Z_2 = M^{n-1}(n,r)$, \dots,
  $Z_{n-1} = M^2(n,r)$ with the orderings $\le_k$ considered in~Proposition~\ref{order} on them.
 \end{enumerate}
 Let us check that the conditions of Theorem~\ref{induction} are
 satisfied:
 \begin{enumerate}
  \item The monoid $\Psi_n$ is commutative.
  \item By Proposition~\ref{subalgebra}, the subspaces
  $$
  A_{ij} = A_i\otimes A_{i+1}\otimes \dots \otimes A_j =
  \A_{n,l,k}(\K) \mbox{ for $l=n+1-i$ and $k=n+1-j$}
  $$
  are subalgebras of $A = \A_n(\K)$ for all $1\le i,j\le n$.
  \item The algebras $\A_{n,k}(\K)$ are $\Psi_{n,k}$-graded,
  since $\deg(\overline{\mathbb{B}}_{n,k})\subset \Psi_{n,k}$.
  \item Let $j\ge i$ and  $l=n+1-j$ and $k=n+1-i$. Then
  \begin{multline*}
   \Gamma_jZ_i \cap Y = \Psi_{n,l}M^k(n,r) \cap \Lambda^1(n,r) \\
    = \left\{z'\,\middle|\, z' =  z +
   \sum_{\tau=1}^{l-1}k_{\tau}(v_{\tau}-v_l);\,
   \begin{array}{l}z_1\ge 0,\dots,z_{k-1}\ge 0, z_k<0;\\ z,z'\in
    \Lambda^1(n,r); k_{\tau}\in \N_0\end{array}\right\}.
  \end{multline*}
  If $l= k$, then the last set coincides with $M^k(n,r)$ and
  therefore
  $$
  \Gamma_jZ_j\cap Y = M^k(n,r) = Z_j.
  $$
  If $l<k$, then 
  \begin{multline*}
   \left\{z'\,\middle|\, z' =  z +
   \sum_{\tau=1}^{l-1}k_{\tau}(v_{\tau}-v_l);\,
   \begin{array}{l}z_1\ge 0,\dots,z_{k-1}\ge 0, z_k<0;\\ z,z'\in
    \Lambda^1(n,r); k_{\tau}\in \N_0\end{array}\right\}=\\
    \begin{split}=&\left\{ z'\,\middle|\,\begin{array}{l}z'_1\ge0,
     \dots,z'_{l-1}\ge 0,z'_l<0;\\ z'_{l+1}\ge 0,\dots,z'_{k-1}\ge 0,z'_k<0\\
    z'\in \Lambda^1(n,r)\end{array} \right\}\\& \bigsqcup
\left\{ z'\,\middle|\,\begin{array}{l}z'_1\ge0,
     \dots,z'_{l-1}\ge 0,z'_l\ge 0;\\ z'_{l+1}\ge 0,\dots,z'_{k-1}\ge 0,z'_k<0\\
     z'\in \Lambda^1(n,r)\end{array} \right\}\end{split}\\
    \subset M^l(n,r)\sqcup M^k(n,r) = Z_i\sqcup Z_j \subset
    \coprod_{\tau=i}^{j} Z_{\tau}.
  \end{multline*}
  \item Let $l=n+1-j$. Then $Y_j = Z_1\sqcup Z_2 \sqcup \dots \sqcup Z_j =
  \Lambda^{l}(n,r)$. Hence     for $i<j$ and $k= n+1-i$
  \begin{align*}
   \Gamma_i Y_j \cap Y& = \Psi_{n,k} \Lambda^l(n,r)\cap
   \Lambda^1(n,r) \\
   &=\left\{ z' \middle| z'= z +
   \sum_{\tau=1}^{k-1}k_{\tau}(v_{\tau}-v_k);
   \begin{array}{l}z_1\le 0,z_2\le 0,\dots,
      z_l\le 0;\\ z,z'\in \Lambda^1(n,r); k_{\tau}\in \N_0\end{array}\right\} \\
   & = \Lambda^l(n,r)
  \end{align*}
  since $k>l$, and therefore the first $l$ coordinates of $z'$, which
  are obtained from the first $l$ coordinates of $z$
   upon addition of
  $\sum_{\tau=1}^{k-1}k_{\tau}(v_{\tau}-v_k)$, are positive.
  \item The radical of $A_0= \K$ is zero.
  \item The orders $\le_k$ on $M^k(n,r)$ satisfy the additional conditions of
  Proposition~\ref{order}.
 \end{enumerate}
Therefore an ideal $\Cf_n(\K)\bar{e}\,\Cf_n(\K)$ is strong
idempotent, and the set   
$$
\mathbb{I} = \left\{ a_n a_{n-1} \dots a_{2}\chi_{\mu}\middle|
\begin{array}{l}
 a_k\in \overline{\mathbb{B}}_{n,k},\ 2\le k\le n-1; \mu\in
 \Lambda(n,r)\\
 \mu+\deg(a_ka_{k-1}\dots a_2)\in \Lambda(n,r),\ 2\le k\le n 
\end{array}
\right\}.
$$
  is a basis of $\Cf_n(\K)(\Lambda(n,r))$.
  Let $a_n a_{n-1} \dots a_{2}\chi_{\mu}$ be an element of
  $\mathbb{I}$.
 Since $\deg(a_j)\in \Psi_{n,j}$, there are $k_{ij}\in \N_0$, such that 
$$
\deg(a_j) = \sum_{i=1}^{j-1}k_{ij}(v_i-v_j).
$$
Let 
$$
k_{jj} = \mu_j - \sum_{i=1}^{j-1}k_{ij}
$$
for  $j=  1,2,\dots,n$. Note that $k_{jj}$ is
the $j$-th coordinate of $z_j =\mu + \deg(a_j)+\deg(a_{j-1})+\dots+\deg(
a_2)$. Since $z_j\in \Lambda(n,r)$ it follows, that $k_{jj}\in \N_0$
for all $j$. Define $\lambda_i = \sum_{j=i}^n k_{ij}$. Then
$(k_{ij})_{i,j=1}^n \in T(\lambda,\mu)$. Thus $\mathbb{I} \subset
\mathbb{J}$. 

Now, let  $\prod_{i<j}e_{ij}^{(k_{ij})}\chi_{\mu} $ be an element of
$\mathbb{J}$. Then $(k_{ij})_{i,j=1}^n$ is an element of
$T(\lambda,\mu)$ for some $\lambda\dominate \mu$. We set 
$$
a_j = \prod_{i=1}^{j-1} e_{ij}^{(k_{ij})}
$$
for $j\in \left\{ 2,3,\dots,n \right\}$. Then $a_j$ is an element of 
$\overline{\mathbb{B}}_{n,j}$. Now, for all $k<j$, we have that the
$j$-th coordinate of $\mu + \deg(a_k)+\dots + \deg(a_2)$ is the same
as in $\mu$, and thus it is positive. For $k=j$, this $j$-th
coordinate is equal to
$$
\mu_j - \sum_{i=1}^{j-1}k_{ij} = \sum_{i=1}^j k_{ij} -
\sum_{i=1}^{j-1}k_{ij} = k_{jj}\ge 0.
$$
For $k\ge j$ the $j$-th coordinate of $\mu+ \deg(a_k)+ \dots +
\deg(a_2)$ is
$$
\mu_j - \sum_{i=1}^{j-1}k_{ij} + \sum_{i=j+1}^k k_{ji} =
\sum_{i=j}^k k_{ji} \ge 0.
$$
Therefore, for every $k$, the element $\mu + \deg(a_k) + \dots +
\deg(a_2)$ lies in $\Lambda(n,r)$. Therefore $\mathbb{J}\subset
\mathbb{I}$.
\end{proof}

\begin{theorem}
 \label{isomorphismprime}
The algebra
 $\Cf_n(\Lambda(n,r))$ is isomorphic to the Borel subalgebra $S^+(n,r)$ of the
 Schur algebra $S(n,r)$.
\end{theorem}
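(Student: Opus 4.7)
The plan is to construct an explicit algebra homomorphism $\Phi\colon \Cf_n(\K)\to S^+(n,r)$, show that $\Cf_n(\K)\bar e\,\Cf_n(\K)\subseteq \Ker\Phi$, and then verify that the induced homomorphism $\bar\Phi\colon \Cf_n(\Lambda(n,r))\to S^+(n,r)$ identifies the natural bases of domain and codomain.

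First I would build $\Phi$ using the natural action of $\A_n(\K)$ on the tensor space $V^{\otimes r}$. The Kostant algebra acts by $\Z$-integral derivations extended through divided powers, and the image lies in $S^+(n,r)$: for each positive root vector $e_{ij}$, the matrix $1 + t e_{ij}$ lies in $B^+_n(\K[t])$, so $\tau_{n,r}(1 + t e_{ij})\in S^+(n,r)\otimes_{\K}\K[t]$, and reading off the coefficient of $t^k$ shows that $\rho(e_{ij}^{(k)})\in S^+(n,r)$ for every $k\ge 0$. On the function factor $B_n$ of $\Cf_n(\K)$, I send each $\chi_\mu$ with $\mu\in\Lambda(n,r)$ to the weight idempotent $\xi_\mu\in S^+(n,r)$ (the projection $V^{\otimes r}\to V^{\otimes r}_\mu$), and to zero otherwise. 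Checking that $\Phi$ respects the skew-product multiplication reduces to matching the commutation relation between $\chi_\nu$ and $e_{ij}$ in $\Cf_n(\K)$ with the analogous relation between $\xi_\nu$ and $\rho(e_{ij})$ in $\End(V^{\otimes r})$: both amount to the statement that $e_{ij}$ shifts weights by the root $v_i - v_j$. Finally $\bar e = \sum_{\mu\notin\Lambda(n,r)}\chi_\mu$ clearly maps to zero, so $\Phi$ descends to $\bar\Phi\colon \Cf_n(\Lambda(n,r))\to S^+(n,r)$.

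The remaining task is to prove $\bar\Phi$ is bijective. By Theorem~\ref{main1} the source has the basis $\mathbb{J}$ parametrised by triples $(\lambda,\mu,K)$ with $\lambda,\mu\in\Lambda(n,r)$, $\lambda\dominate\mu$ and $K\in T(\lambda,\mu)$; by Proposition~\ref{c1} together with the bijection $\Omega(n,r)\cong T(n,r)$ from Proposition~\ref{correlation}, the target $S^+(n,r)$ has Green's basis $\{\xi_{i,j}\}$ parametrised by exactly the same set. Fixing $\lambda\dominate\mu$, I would compute $\bar\Phi\bigl(\prod_{i<j}e_{ij}^{(k_{ij})}\chi_\mu\bigr)$ explicitly and verify that it coincides, up to a combinatorially determined scalar, with $\xi_{i^\lambda,j(K)}$, where $i^\mu = (1^{\mu_1},\ldots,n^{\mu_n})$ and $j(K)\in I(n,r)$ is the multi-index characterised by $t(i^\lambda, j(K)) = K$ in the notation of Proposition~\ref{correlation}. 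This shows $\bar\Phi$ sends a basis of the source to a family of nonzero scalar multiples of the basis of the target, so it is a linear isomorphism, hence an isomorphism of $\K$-algebras.

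The principal obstacle is this final combinatorial step. Applying the ordered product of divided powers, in the order fixed in Section~\ref{kostant}, to the canonical weight vector $v_{i^\mu} = v_1^{\otimes\mu_1}\otimes v_2^{\otimes\mu_2}\otimes\cdots\otimes v_n^{\otimes\mu_n}$ is delicate: the rightmost factor $e_{12}^{(k_{12})}$ converts $k_{12}$ of the $v_2$-positions into $v_1$ (summing over the $\binom{\mu_2}{k_{12}}$ subsets), then $e_{13}^{(k_{13})}$ and $e_{23}^{(k_{23})}$ transform $k_{13}$ and $k_{23}$ of the original $v_3$-positions into $v_1$ and $v_2$, and so on through the whole product. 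The particular ordering of generators was chosen in Section~\ref{kostant} precisely so that when each $e_{ij}^{(k_{ij})}$ is applied it still sees the original $v_j$-positions undisturbed by later factors; the successive subset summations then assemble cleanly into the $\Sigma_r$-symmetric sum defining $\xi_{i^\lambda, j(K)}$, completing the basis matching.
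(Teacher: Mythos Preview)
Your construction of the map $\bar\Phi$ via the divided-power action on $V^{\otimes r}$ and the weight idempotents is exactly the homomorphism the paper builds (there called $\rho_r$). The argument that $\rho_r(e_{ij}^{(k)})\in S^+(n,r)$ is correct in spirit, though you should invoke the infiniteness of $\K$ explicitly: for every $a\in\K$ one has $\tau_{n,r}(I+ae_{ij})=\sum_{k\ge 0}a^k\rho_r(e_{ij}^{(k)})\in S^+(n,r)$, and a Vandermonde argument extracts each coefficient; passing through $\K[t]$ as you wrote requires knowing the base-change behaviour of $S^+$, which is not set up in the paper.

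Where you diverge from the paper is the final step. You propose to prove bijectivity by an explicit basis matching, showing that each $\prod_{i<j}e_{ij}^{(k_{ij})}\chi_\mu$ is sent to a nonzero scalar multiple of a single $\xi_{i,j}$. This is plausible (and with the paper's normalisation of $\xi_{i,j}$ the scalar is in fact $1$), but it requires a careful inductive verification that no cross-terms between different $\Sigma_r$-orbits appear, and that the scalar never vanishes in positive characteristic. You correctly identify this as the principal obstacle, and your sketch of the column-by-column mechanism is the right idea, but carrying it out cleanly is real work.

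The paper sidesteps this computation entirely by arguing in the opposite direction. Rather than showing $\image(\rho_r)\subseteq S^+(n,r)$, it shows $\image(\tau_r)\subseteq\image(\rho_r)$ by checking it on the group generators $I+\mu E_{ij}$; since $\image(\rho_r)$ is an algebra, this gives $S^+(n,r)\subseteq\image(\rho_r)$. Now the dimension sandwich
\[
\dim S^+(n,r)\le \dim\image(\rho_r)\le \dim\Cf_n(\Lambda(n,r))
\]
together with the equality of the two ends (both $=|T(n,r)|$ by Theorem~\ref{main1} and Proposition~\ref{c1}) forces all three to coincide, so $\rho_r$ induces an isomorphism $\Cf_n(\Lambda(n,r))\cong S^+(n,r)$. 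No basis-element computation is needed. Your approach yields the same conclusion and additionally gives the explicit correspondence between bases, but the paper's route is considerably shorter.
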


Before we prove the theorem let us explain how we can use it to
construct minimal projective resolutions of the simple
$S^+(n,r)$-modules.

Suppose, we
have constructed a  $\Psi_n$-graded (minimal) projective resolution
$P_{\bullet}$ of the trivial
module $M$ over the algebra $\A_n(\K)$. For each $\lambda \in
\Lambda(n,r)$ we denote by $N_{\lambda}$ the one dimensional
$B_n$-module, such that $fv = f(\lambda)v$ for all $v\in N_{\lambda}$ and all
$f\in B_n$. Then, by~Proposition~\ref{projective} and
Corollary~\ref{minimalmap}, $P_{\bullet}\ltimes_{\Psi_n}N_{\lambda}$ is  a
$\Psi_n$-graded (minimal) projective
resolution of the module $M\ltimes_{\Psi_n}N_{\lambda}$, since
$N_{\lambda}$ is a projective $B_{n}$-module, $\A_n(\K)_{0}= \K$ and
$\Rad(B_n) =0$.  Now,
the complex $$\Cf_n(\Lambda(n,r))\otimes_{\Cf_n(\K)}
\left(P_{\bullet}\ltimes_{\Psi_n}N_{\lambda}\right)$$ is a
 (minimal)
$\Psi_n$-\emph{graded} projective resolution
of the module $$\Cf_n(\Lambda(n,r))\otimes_{\Cf_n(\K)}
\left(M\ltimes_{\Psi_n}N_{\lambda}\right) \cong \K_{\lambda}.$$
Since $\Cf_n(\Lambda(n,r)) \cong S^+(n,r)$ is a finite dimensional
algebra, by Proposition~\ref{finitedimensional}, the complex
$$\Cf_n(\Lambda(n,r))\otimes_{\Cf_n(\K)}
\left(P_{\bullet}\ltimes_{\Psi_n}N_{\lambda}\right)$$ is  a (minimal) projective resolution of the
$S^+(n,r)$-module $\K_{\lambda}$ in the ungraded sense.   
\begin{proof}
In this proof we will use the notation introduced in the beginning of
Section~\ref{schur}. For simplicity we will write $u$ for the element
$u\otimes 1_B$ of $\Cf_n(\K)$.

Let $\left\{ e_k\mid 1\le k\le n \right\}$ be a basis of $V$. Denote
by $E_{ij}$ an endomorphism of $V$ given by
$E_{ij}(e_k) = \delta_{jk}e_i$. Define a representation $\rho_r$ of
$\A_n({\K})$ on $V^{\otimes r}$ by  
\begin{multline*}
\rho_r\left(e_{ij}^{(k)}\right)(v_1\otimes \dots \otimes v_r) =\\
=\sum_{\sigma_1<\sigma_2<\dots <\sigma_k} v_1\otimes
\dots\otimes E_{ij}(v_{\sigma_1})\otimes\dots \otimes
E_{ij}(v_{\sigma_2})\otimes \dots \otimes E_{ij}(v_{\sigma_k})\otimes \dots
\otimes v_r.
\end{multline*}
For $\lambda\in\Lambda(n,r)$, write     $\xi_{\lambda}$ for
$\xi_{i,i}$, for any $i\in I(n,r)$ such that $\wt(i)=\lambda$.
Extend $\rho_r$ to $\Cf_n(\K)$ by 
$$
\rho_r (a\otimes \chi_{\lambda})=
\begin{cases}
\rho_r(a) \xi_{\lambda} & \mbox{ if $\lambda\in \Lambda(n,r)$}\\
 0 & \mbox{ otherwise}.
\end{cases}
$$
It is clear that $\rho_r\left(\chi_{\overline{\Lambda(n,r)}}\right) = 0$. Therefore $\rho_r$ is a
representation of the algebra $\Cf_n(\Lambda(n,r))$. 
Note, that the image $\image(\rho_r)$ of $\rho_r$ is a subalgebra of
$\End(V^{\otimes r})$.

First we show that the image $\image(\tau_r)$ of
$\tau_r=\tau_{n,r}$ is a subset of
the image $\image(\rho_r)$.
The group $B^+_n(K)$ is generated by the elements of the form
$I+\mu E_{ij}$, where $I$ is the identity matrix, $\mu\in \K$ and
$i\le j$.
It is easy to check, that
$$
\tau_r(I + \mu E_{ii}) = \sum_{\lambda\in
\Lambda(n,r)}(1+\mu)^{\lambda_i} \xi_{\lambda} =
\sum_{\lambda\in\Lambda(n,r)}
(1+\mu)^{\lambda_i}\rho_r(\chi_{\lambda}).
$$
Suppose $i<j$. Then
\begin{align*}
 \tau_r(I+\mu E_{ij}) (v_1\otimes \dots \otimes v_r) &= 
 (v_1  + \mu E_{ij} v_1) \otimes \dots \otimes (v_r + \mu E_{ij}v_r)\\
 & = \sum_{k=0}^{r}\mu^k
 \rho_r\left(E_{ij}^{(k)}\right)(v_1\otimes \dots \otimes v_r).
\end{align*}
This shows, that $\image(\tau_r)\subset \image(\rho_r)$.

Thus $S^+(n,r)$ is
a subalgebra of $\image(\rho_r)$, and therefore it is a
subquotient of $\Cf_n(\Lambda(n,r))$. Now we use the fact that both
algebras have bases that are in bijection with the finite set $T(n,r)$
(see Proposition~\ref{c1} and Theorem~\ref{main1}). 
Therefore, they have equal dimensions and are       isomorphic.

\end{proof}

\bibliography{article}
\bibliographystyle{amsplain}
\end{document}